\documentclass[12pt,a4paper,oneside,reqno]{amsart}

\usepackage{amsthm}
\usepackage{amsmath}
\usepackage{amssymb}
\usepackage{amsfonts}
\usepackage{latexsym}
\usepackage{amscd}
\usepackage[utf8]{inputenc}
\usepackage{typearea}
\usepackage{yfonts}
\usepackage{textcomp}
\usepackage{mathrsfs}
\usepackage{hyperref}
\usepackage[draft]{fixme} 
\usepackage{pdfsync}
\usepackage[active]{srcltx}
\usepackage{xypic}
\usepackage{tikz}
\usetikzlibrary{arrows.meta,cd,positioning}
\usetikzlibrary{cd}
\usepackage{mathrsfs}
\usepackage{verbatim}
\usepackage{tensor}
\usetikzlibrary{calc}

\newcommand{\ZZ}{\mathbb{Z}}

\newcommand{\GrpH}{\mathcal{H}}
\newcommand{\GrpG}{\mathcal{G}}

\newcommand{\uloopr}[1]{\ar@'{@+{[0,0]+(-4,5)}@+{[0,0]+(0,10)}@+{[0,0] +(4,5)}}^{#1}}
\newcommand{\uloopd}[1]{\ar@'{@+{[0,0]+(5,4)}@+{[0,0]+(10,0)}@+{[0,0]+ (5,-4)}}^{#1}}
\newcommand{\dloopr}[1]{\ar@'{@+{[0,0]+(-4,-5)}@+{[0,0]+(0,-10)}@+{[0, 0]+(4,-5)}}_{#1}}
\newcommand{\dloopd}[1]{\ar@'{@+{[0,0]+(-5,4)}@+{[0,0]+(-10,0)}@+{[0,0 ]+(-5,-4)}}_{#1}}

\newtheorem{lemma}{Lemma}[section]
\newtheorem{corollary}[lemma]{Corollary}
\newtheorem{theorem}[lemma]{Theorem}
\newtheorem{proposition}[lemma]{Proposition}

\theoremstyle{definition}
\newtheorem{definition}[lemma]{Definition}

\newtheorem{example}[lemma]{Example}

\newtheorem{notation}[lemma]{Notation}
\newtheorem{remark}[lemma]{Remark}

\theoremstyle{theorem}
\newtheorem{introtheorem}{Theorem}

\numberwithin{equation}{section}

\title[Triviality of groupoid extensions]{Global and local triviality for extensions of ample groupoids: an algebraic approach}

\begin{document}

	\author{Nathan Brownlowe}
	\address{School of Mathematics and Statistics\\
		The University of Sydney\\
		NSW 2006\\
		Australia} \email{nathan.brownlowe@sydney.edu.au}\urladdr{https://www.maths.usyd.edu.au/u/nathanb/}
	
	\author{Lisa Orloff Clark}
	\address{School of Mathematics and Statistics\\
		Victoria University of Wellington\\
		PO Box 600\\
		Wellington 6140\\
		New Zealand} \email{lisa.orloffclark@vuw.edu.nz}\urladdr{https://people.wgtn.ac.nz/lisa.orloffclark}
	
	\author{Enrique Pardo}
	\address{Departamento de Matem\'aticas, Facultad de Ciencias\\ Universidad de C\'adiz, Campus de
		Puerto Real\\ 11510 Puerto Real (C\'adiz)\\ Spain}
	\email{enrique.pardo@uca.es}\urladdr{https://sites.google.com/gm.uca.es/webofenriquepardo/inicio}
	
	\author{Aidan Sims}
	\address{School of Mathematics and Statistics\\
		University of New South Wales\\
		NSW 2052\\
		Australia} \email{aidan.sims@unsw.edu.au}\urladdr{https://www.aidansims.com}
	
	
	\thanks{The first-named author was partially supported by Australian Research Council Discovery Project DP250100297. The second-named author was partially supported by the Marsden Fund of the Royal Society of New Zealand (grant number 21-VUW-156). The third-named author was partially supported by the Spanish State Research Agency (grant numbers PID2020-113047GB-I00 and PID2023-147110NB-I00), by PAIDI grants FQM-298 and ProyExcel 00780 ``Operator Theory: an interdisciplinary approach'' (2021), of the Junta de Andaluc\'ia. The fourth-named author was partially supported by Australian Research Council Discovery Projects DP220101631 and DP250100297.}
	
	\subjclass[2020]{22A22, 20M18}
	
	\keywords{Ample groupoids, inverse semigroups, twisted actions, crossed products}
	

	\begin{abstract}
		Using the duality between the categories of inverse semigroups and ample groupoids \cite{S2}, we prove the triviality (global and local) of the central term of the extension in terms of twisted crossed products.
	\end{abstract}
	
	\maketitle
	
	\section{Introduction}
	
The study of algebras associated with groups and their
generalizations---(inverse) semigroups, groupoids, and so forth---has been a
key tool for understanding their structure. A classical example is the class
of group algebras that connect the representation theory of the group to that
of the corresponding group algebra \cite{Pass1}.

In the case of group algebras, the extensions of groups are connected to a
natural algebra construction: (twisted) crossed products. Indeed, given a
group extension
$$N\hookrightarrow H \twoheadrightarrow G, $$
we can represent the group algebra $K[H]$ as a twisted crossed product
$K[N]\ast_{\sigma,\tau} G$: the action $\sigma:G\rightarrow \text{Aut}(K[N])$
is induced by the canonical action of $G$ by conjugation on $N$; and the
2-cocycle $\tau:G\times G\rightarrow \mathcal{U}(K[N])$ appears as the
obstruction to a section of the quotient map from $G$ to $K$ being a
homomorphism (see, for example, \cite{Pass2}). In fact, $H \cong N
*_{\sigma,\tau} G$, and so
$$K[N]\ast_{\sigma,\tau} G \cong K[H] \cong K[N\ast_{\sigma,\tau} G].$$
That is, in the case of groups, the central term of a group extension can be
realised as a twisted crossed product of the extremes of this extension, and
this decomposition passes in a natural way to group rings.

We are interested in the analogous circle of ideas for groupoids. In the
setting of $C^*$-algebras, groupoids have played a major role since Renault's
seminal work \cite{Renault}. Here, natural generalizations of the above group
crossed product algebras arise from the results of Renault \cite{Renault} and
Kumjian \cite{Kumjian}, and the twisted groupoid algebras are abstractly
characterised as Cartan pairs of $C^*$-algebras. There are two constructions
associated with this phenomenon: on the one hand, an algebra associated with
twisting the convolution product of the groupoid algebra $C^*(\mathcal{G})$
using a continuous 2-cocycle $\sigma:\mathcal{G}{ }_s\times_r
\mathcal{G}\rightarrow \mathbb{T}$ taking values in the group $\mathbb{T}$ of
unimodular complex numbers; and on the other hand a subalgebra of equivariant
elements of the groupoid $C^*$-algebra $C^*(\mathcal{G}\times_\sigma
\mathbb{T})$, where the groupoid arises as the central term of a groupoid
extension
$$\mathcal{G}^{(0)}\times \mathbb{T}\hookrightarrow \mathcal{G}\times_\sigma \mathbb{T} \twoheadrightarrow \mathcal{G}.$$
Kumjian extended this construction to extensions
$$\mathcal{G}^{(0)}\times \mathbb{T}\hookrightarrow \Sigma \twoheadrightarrow \mathcal{G}$$
in which the central term $\Sigma$ need not be a twisted product of
$\mathcal{G}$ by $\mathbb{T}$. When $\mathcal{G}$ is Hausdorff and totally
disconnected, the two definitions are equivalent.

The interest of this question passed to  the purely algebraic setting through
the algebraic siblings of groupoid $C^*$-algebras for ample groupoids, known
as Steinberg algebras \cite{S1}. The second and then the second and fourth
authors (and many coauthors), extended the notion of a twisted groupoid
algebra to the case of Steinberg algebras  in \cite{A6, A9}, following the
$C^*$-algebraic scheme of construction. Here $\mathbb{T}$ can be replaced by
a group of units of a general commutative ring, and is always regarded as
carrying the discrete topology. The upshot is that the algebras obtained from
the construction are far more general than in the $C^*$-algebraic setting.

In \cite{S2}, Steinberg formulated a new approach to twisted Steinberg
algebras, by showing that the Lawson--Lenz duality between ample groupoids
and Boolean inverse semigroups extends to a category equivalence preserving
extensions on both sides. He used this approach to define a generalization of
twisted Steinberg algebras by suitable extensions of ample groupoids in which
the normal kernel turns out to be an abelian subgroupoid, so having a twisted
product central term. But to extend his construction to its full
generality---as in the group algebra case---one must first understand how to
characterise triviality of the central term of an \emph{arbitrary} extension
of ample groupoids without restrictions, and then to find a correct algebraic
model for \emph{twisted crossed products} of Steinberg algebras. In this
paper, we address the first question. \vspace{.2truecm}
	
	Following \cite{S2}, by an extension of ample groupoids we mean an exact sequence
	\[
	\xymatrix{\GrpH\ar[r]^{\kappa}  & \Sigma \ar[r]^{\rho} & \GrpG}
	\]
	of open, iso-unital functors with $\kappa$ injective, $\rho$ surjective, and
	$\rho^{-1}(\GrpG^{(0)})=\kappa(\GrpH)$. Observe that since $\rho \circ \kappa$ is a homeomorphism of unit-spaces, $r \circ \kappa = s\circ \kappa$ and so $\mathcal{H}$ is a group bundle.
    Our aim in this paper is to study the triviality properties of such an extension, in the sense of $\Sigma$ being coordinatised by the kernel subgroupoid $\GrpH$ and the quotient groupoid $\GrpG$.
	
	In our first main result we show that when there exists a continuous global section $\gamma:\GrpG\to \Sigma$ of $\rho$, we can describe $\Sigma$ globally as a twisted crossed-product groupoid.
	
	\begin{introtheorem}\label{thm:mainthm1}
		Suppose
		\[
		\xymatrix{\GrpH\ar[r]^{\kappa}  & \Sigma \ar[r]^{\rho} & \GrpG \ar@/^8pt/ [l] ^{\gamma}},
		\]
		is an extension of ample groupoids, with $\gamma$ a continuous global section of $\rho$. Then there exists a triple $\widehat{\Lambda}:=(\widehat{\alpha},\widehat{\lambda},\widehat{f})$ consisting of a homeomorphism $\widehat{\alpha}\colon \GrpG^{(0)}\to \GrpH^{(0)}$; a continuous $\widehat{f}$-twisted action $\widehat{\lambda}\colon \GrpG\to \operatorname{End}(\GrpH)$ where each $\widehat{\lambda}_g$ is a bijection $\GrpH_{\widehat{\alpha}(d_\GrpG(g))}\to \GrpH_{\widehat{\alpha}(r_\GrpG(g))}$; and a continuous $2$-cocycle $\widehat{f}\colon \GrpG^{(2)}\to \GrpH$ for $\widehat{\lambda}$, with the following property: the set
		\[
		\GrpH\ast_{\widehat{\Lambda}}\GrpG:=\{(h,g)\in \GrpH\times \GrpG : d_\GrpH(h)=\widehat{\alpha}(r_\GrpG(g))\}
		\]
		is a groupoid with $(\GrpH\ast_{\widehat{\Lambda}}\GrpG)^{(2)} = \{((h, g), (h',g')) : s(h) = \widehat{\lambda}(r(h'))\text{ and } s(g) = r(g')\}$ and multiplication given by
		\begin{equation}\label{eq:product formula}
		(h_1,g_1)(h_2,g_2)
		=
		\bigl(h_1\,\widehat{\lambda}_{g_1}(h_2)\,\widehat{f}(g_1,g_2),\,g_1g_2\bigr)\text{ for }(g_1,g_2)\in \GrpG^{(2)};
		\end{equation}
        moreover, under the relative topology inherited from $\GrpH \times \GrpG$ it is a topological groupoid and is isomorphic to $\Sigma$.
	\end{introtheorem}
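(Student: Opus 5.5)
The plan is to mimic the classical Schreier construction for group extensions, fibrewise over the unit space. First fix conventions: write $d,r$ for source and range, and note that $\rho\circ\kappa$ restricted to units is a homeomorphism $\GrpH^{(0)}\to\GrpG^{(0)}$, because $\rho$ and $\kappa$ are iso-unital. I would set $\widehat{\alpha}:=(\rho\circ\kappa|_{\GrpH^{(0)}})^{-1}$, identifying units of $\Sigma$ with $\GrpH^{(0)}$ via $\kappa$. Replacing $\gamma$ if necessary, I would assume $\gamma$ is normalised on units, i.e.\ $\gamma(u)=\kappa(\widehat{\alpha}(u))$ for $u\in\GrpG^{(0)}$. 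If $\gamma$ is not normalised, multiply by $\kappa(\cdot)^{-1}$ of $\gamma(r(g))$. This is continuous, since $\gamma(u)\in\rho^{-1}(\GrpG^{(0)})=\kappa(\GrpH)$ and $\kappa$ is a homeomorphism onto its open image, $\kappa$ being an open injective functor. Since $\rho$ is a functor and $\rho\gamma=\mathrm{id}$, we get $d(\gamma(g))=\kappa\widehat\alpha(d(g))$ and likewise for ranges.

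Next define the data. Set $\widehat{\lambda}_g(h):=\kappa^{-1}\bigl(\gamma(g)\kappa(h)\gamma(g)^{-1}\bigr)$ for $h\in\GrpH_{\widehat\alpha(d(g))}$. This is well defined because $\rho$ of the conjugate is a unit, so the conjugate lies in $\kappa(\GrpH)$. It is a group isomorphism $\GrpH_{\widehat\alpha(d(g))}\to\GrpH_{\widehat\alpha(r(g))}$, with inverse given by conjugating by $\gamma(g)^{-1}$. Set $\widehat f(g_1,g_2):=\kappa^{-1}\bigl(\gamma(g_1)\gamma(g_2)\gamma(g_1g_2)^{-1}\bigr)$, which again has unit image under $\rho$. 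Both maps are compositions of continuous groupoid operations with $\kappa^{-1}$ on the open set $\kappa(\GrpH)$, so they are continuous. The twisted-action identity $\widehat\lambda_{g_1}\widehat\lambda_{g_2}=\mathrm{Ad}_{\widehat f(g_1,g_2)}\widehat\lambda_{g_1g_2}$ and the cocycle identity $\widehat\lambda_{g_1}(\widehat f(g_2,g_3))\widehat f(g_1,g_2g_3)=\widehat f(g_1,g_2)\widehat f(g_1g_2,g_3)$ follow by expanding $\gamma(g_1)\gamma(g_2)\gamma(g_3)$ in two ways, exactly as for groups. Normalisation gives $\widehat f(u,g)=\widehat f(g,u)=$ unit.

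Then define $\Phi\colon \GrpH\ast_{\widehat\Lambda}\GrpG\to\Sigma$ by $\Phi(h,g)=\kappa(h)\gamma(g)$. This is composable because $d(\kappa(h))=\kappa(d(h))=\kappa\widehat\alpha(r(g))=r(\gamma(g))$. It is a bijection with inverse
\[
\Psi(\sigma)=\bigl(\kappa^{-1}(\sigma\gamma(\rho(\sigma))^{-1}),\rho(\sigma)\bigr),
\]
where exactness is used to see that $\sigma\gamma(\rho\sigma)^{-1}\in\rho^{-1}(\GrpG^{(0)})=\kappa(\GrpH)$. Both $\Phi$ and $\Psi$ are continuous in the relative product topology, so $\Phi$ is a homeomorphism. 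The key computation is
\[
\kappa(h_1)\gamma(g_1)\kappa(h_2)\gamma(g_2)=\kappa\bigl(h_1\widehat\lambda_{g_1}(h_2)\widehat f(g_1,g_2)\bigr)\gamma(g_1g_2),
\]
obtained by inserting $\gamma(g_1)^{-1}\gamma(g_1)$. It shows that $\Phi$ carries the formula \eqref{eq:product formula} to the product in $\Sigma$. One also checks that $\Phi$ maps the stated composable pairs exactly onto $\Sigma^{(2)}$, since $d\Phi(h,g)=\kappa\widehat\alpha(d(g))$ and $r\Phi(h,g)=\kappa(r(h))$.

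Finally, rather than verifying the groupoid axioms directly, I would transport the structure: because $\Phi$ is a bijection intertwining composability and multiplication, $\GrpH\ast_{\widehat\Lambda}\GrpG$ is a groupoid with these operations. The units are $(\widehat\alpha(u),u)$ and the inverses are pulled back through $\Psi$. Because $\Phi$ is a homeomorphism, it is a topological groupoid isomorphic to $\Sigma$. The main obstacle I expect is bookkeeping rather than substance. One point is the normalisation of $\gamma$ together with the convention mismatch in the statement ($d_\GrpH(h)=\widehat\alpha(r(g))$ versus the composability condition), which must match $\Phi$ consistently. The other is justifying continuity of $\kappa^{-1}$, which needs $\kappa(\GrpH)$ open and $\kappa$ open.
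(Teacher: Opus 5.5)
Your argument is correct, but it takes a different route from the paper's. The paper never computes inside $\Sigma$. It passes to $N=\Gamma_c(\GrpH)$, $T=\Gamma_c(\Sigma)$, $S=\Gamma_c(\GrpG)$ and notes that $U\mapsto\gamma(U)$ is an order-preserving section $j$ of $\Gamma_c(\rho)$. It then takes Lausch's twisted $S$-module $\Lambda=(\alpha,\lambda,f)$ with $T\cong N\ast_\Lambda S$ and works on germs in three steps:
\begin{enumerate}
\item $\Lambda$ induces $\widehat\Lambda$ (Proposition~\ref{prop:dualtriple}); order preservation of $j$ is exactly what makes $\widehat\lambda$ and $\widehat f$ well defined there.
\item $\GrpG(N)\ast_{\widehat\Lambda}\GrpG(S)$ is an ample groupoid (Proposition~\ref{prop:twistedgroupoid}), with associativity checked through the twisted-action and cocycle identities.
\item This groupoid is isomorphic to $\GrpG(T)\cong\Sigma$ (Theorem~\ref{thm:mainthmwithorderpreserving}).
\end{enumerate}
You instead run the Schreier construction directly in $\Sigma$. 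You get the groupoid axioms for free by transporting the structure along the homeomorphism $(h,g)\mapsto\kappa(h)\gamma(g)$, rather than checking associativity by hand. Since $\lambda_s(a)=\iota^{-1}(j(s)\iota(a)j(s)^*)$ and $\iota(f(s,t))=j(s)j(t)j(st)^*$, your triple is the paper's triple read through Steinberg's duality.

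Your route buys several things:
\begin{itemize}
\item It is shorter and self-contained, and needs no germ calculus.
\item It does not use ampleness, only that $\kappa$ is an open embedding with $\kappa(\GrpH)=\rho^{-1}(\GrpG^{(0)})$ and that both functors are iso-unital.
\item It makes explicit the normalisation of $\gamma$ on units. The paper uses this implicitly when it treats $j|_{E(S)}$ as $(\pi|_{E(T)})^{-1}$.
\item It makes explicit that composability is governed by $d(g_1)=r(g_2)$ alone, as in Proposition~\ref{prop:twistedgroupoid}. The condition involving $\widehat\lambda(r(h'))$ printed in the statement appears to be a misprint.
\end{itemize}
The paper's route buys the identification of $\widehat\Lambda$ as the dual of the inverse-semigroup data $\Lambda$. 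It also sets up an algebraic framework (charts built from $T$ rather than from $\gamma$) that survives in Section~\ref{sec:nonorderpreserving}, where there is no continuous global section and your construction has nothing to start from.
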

	
	In general, however, such a global section $\gamma$ need not exist, as we illustrate in Section~\ref{Sect:Examples}. In this case, we replace the global crossed-product model by a local coordinatisation (see Remark~\ref{rmk:localthmforgroupoids}): the groupoid $\GrpH\ast_{\widehat{\Lambda}}\GrpG$ is replaced by a space $\mathcal{X}$ that can be identified set-theoretically with $\GrpH\ast_{\widehat{\Lambda}}\GrpG$, and that is covered by local coordinate spaces $\mathcal{X}_U$ that can be identified set-theoretically with compact open bisections $U$ of $\GrpH\ast_{\widehat{\Lambda}}\GrpG$ and on pairs of which a local product formula akin to~\eqref{eq:product formula} describes multiplication in $\Sigma$. So Theorem~\ref{thm:mainthmnoorderpreserving} and Remark~\ref{rmk:localthmforgroupoids} can be viewed as a local version of Theorem~\ref{thm:mainthm1}.

\section{Preliminaries}\label{sec:prelim}
	
After some basic preliminaries, we begin by outlining Steinberg's work
\cite{S2} on the duality between the categories of ample groupoids and
Boolean inverse semigroups. We then present the key definitions and results
from Paterson's universal groupoid construction, and the theory of inverse
semigroup extensions.

On one side of our duality, we consider groupoid extensions.  A groupoid is a
small category in which every morphism has an inverse; see, for example,
\cite{S1} for more details. Throughout this paper, the groupoids we consider
are topological groupoids with Hausdorff unit spaces. We also assume that
they are ample, meaning that they have a basis of compact open subsets on
which the source and range maps restrict to homeomorphisms onto open subsets
of the unit space. We refer to such subsets as compact open bisections.

On the other side of our duality, we consider inverse semigroup extensions.
See \cite{Law} for the background on inverse semigroups. Given an element $s$
in an inverse semigroup $S$, we denote its inverse by $s^*$ and we denote the
set of all idempotents in $S$ by $E(S)$.    Recall that an inverse semigroup
$S$ comes with a natural partial order where $s\leq t$ if and only if
$ss^*t=s$. We call $S$ a Clifford inverse semigroup if idempotents are
central, that is, for any idempotent $e \in S$, and any $s \in S$, we have
$se=es$.
	
	\subsection{Categorical duality}\label{Subsect:Categoricalduality}
	
	In \cite{S2}, Steinberg studied the problem of defining and reconstructing twisted Steinberg algebras \cite{A6, A9} through an inverse semigroup approach.
	At the level of algebras this makes sense because of \cite[Theorem 6.3]{S1}: If $R$ is a commutative unital ring, $S$ is an inverse semigroup, and $\GrpG(S)$ is the universal Paterson groupoid (see below), then there exists a canonical $R$-algebra isomorphism $R[S]\cong A_R(\GrpG(S))$; moreover, $R[-]$ defines a covariant functor from the category of inverse semigroups and $\ast$-semigroup homomorphisms, to the category of $R$-algebras and (unital) homomorphisms.
	
	The fundamental result of Steinberg is that Lawson--Lenz duality between Boolean inverse semigroups and ample groupoids preserves extensions in both categories. Concretely, Steinberg shows the following.
	\begin{enumerate}
		\item If $\GrpG$ is an ample groupoid, then the set $\Gamma_c(\GrpG)$ of compact open bisections in $\GrpG$ is a Boolean inverse semigroup.
		\item If $S$ is a Boolean inverse semigroup, then $\GrpG(S)$ is an ample groupoid.
		\item The maps $\GrpG(-)$ and $\Gamma_c(-)$ are mutually inverse \cite[Theorem 3.3]{S2}, and induce categorical equivalences between the categories of ample groupoids with iso-unital functors (maps that restrict to homeomorphisms of unit spaces), and Boolean inverse semigroups with  idempotent bijective homomorphisms (maps that restrict to isomorphisms of the semilattice of idempotents) \cite[Theorem 3.4]{S2}. Moreover, these functors preserve injective and surjective maps.
	\end{enumerate}
	These results mean that the functors $\GrpG(-)$ and $\Gamma_c(-)$ constitute a duality between, on the one hand extensions of ample groupoids
	$$
	\xymatrix{\GrpH\ar[r]^{\kappa}  & \Sigma \ar[r]^{\rho} & \GrpG},
	$$
	with $\kappa, \rho$ open iso-unital functors, $\kappa$ injective, $\rho$ surjective, and $\rho^{-1}(\GrpG^{(0)})=\kappa(\GrpH)$, and on the other hand, extensions of Boolean inverse semigroups
	$$
	\xymatrix{N\ar[r]^{\iota}  & T \ar[r]^{\pi} & S}
	$$
	with $\iota, \pi$ idempotent-bijective homomorphisms, $\iota$ injective, $\pi$ surjective, and $\iota (N)=\pi^{-1}(E(S))$.
	
	A key result for our work is \cite[Lemma 3.7]{S2}, which says that $\GrpG(-)$ and $\Gamma_c(-)$ identify continuous global sections of surjective open iso-unital functors between ample groupoids with order-preserving sections of idempotent-bijective homomorphisms between Boolean inverse semigroups.
	
	According to \cite[Chapter 5]{Law}, since the maps are idempotent-separating, $N$ is a Clifford semigroup (so that $N=Z(E(T))$) and it is a normal kernel for $\pi$ (i.e. $N$ is stable under conjugacy in $T$, and ``generates'' $\ker \pi$ as a congruence).
	
	Steinberg only added some extra restrictions ($N$ being abelian, or $\GrpH=\GrpG^{(0)} \times T$ for $T$ an abelian group) to apply his results to the classical twisted extensions in $C^*$-algebras/Steinberg algebras.
	
	\subsection{Groupoids associated to inverse semigroups} We now recall Steinberg's definitions and notation from \cite{S2} on the universal Paterson groupoid of germs associated to an inverse semigroup $S$. Note that we use germs in this paper, but we could have also used an ultrafilter approach, see \cite{ACaHJL}.

    We denote by $\widehat{E}(S)$ (this is called $\operatorname{Spec}(E(S))$ in \cite{S2}) the set of nonzero homomorphisms $\chi\colon E(S)\to \{0,1\}$. The sets $D_S(e):=\{\chi\in\widehat{E}(S) : \chi(e)=1\}$, $e\in E(S)$, are a basis of compact open subsets of $\widehat{E}(S)$. (We also write $D(e)$ if the inverse semigroup is understood.) For each $s\in S$ there is a homeomorphism $\beta_s\colon D_S(s^*s)\to D_S(ss^*)$ given by $\beta_s(\chi)(e)=\chi(s^*es)$. Then $s\mapsto \beta_s$ is a nondegenerate action of $S$ by partial homeomorphisms.
	
	The groupoid of germs is the set $\mathcal{G}(S):=(S\times \widehat{E}(S))/\sim$, where $(s,\chi)\sim (t,\lambda)$ if and only if $\chi=\lambda$ and there exists $u\le s,t$ with $\chi(u^*u)=1$. The groupoid $\mathcal{G}(S)$ has product $[s,\chi][t,\lambda]=[st,\lambda]$ if $\chi=\beta_t(\lambda)$; domain and range maps given by
	\[
	d_S([s,\chi])=\chi\quad\text{and}\quad r_S([s,\chi])=\beta_s(\chi);
	\]
	and inverse given by $[s,\chi]^{-1}=[s^*,\beta_s(\chi)]$. The groupoid $\mathcal{G}(S)$ has a basis of compact open bisections $D_S(s):=\{[s,\chi]:\chi\in D_S(s^*s)\}$, $s\in S$. Note that $\widehat{E}(S)\cong \mathcal{G}(S)^{(0)}$ with $\chi\mapsto [e,\chi]$ for $\chi(e)=1$.
	
	We will use the following results.
	
	\begin{lemma}\label{lem:resultaboutbeta}
		If $[s,\chi]=[t,\chi]$, then $\beta_s(\chi)=\beta_t(\chi)$.
	\end{lemma}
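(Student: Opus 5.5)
Unwinding the definition of $\sim$, the hypothesis $[s,\chi]=[t,\chi]$ supplies an element $u\in S$ with $u\le s$, $u\le t$ and $\chi(u^*u)=1$. The plan is to prove the single claim that $\beta_s$ and $\beta_u$ agree at $\chi$. Applying the same claim to $t$ and $u$ then gives $\beta_s(\chi)=\beta_u(\chi)=\beta_t(\chi)$. Note first that $\chi\in D_S(u^*u)$, so $\beta_u(\chi)$ is defined. Since $u\le s$ gives $u^*u\le s^*s$, we have $\chi(s^*s)\ge\chi(u^*u)=1$, so $\chi\in D_S(s^*s)$ and $\beta_s(\chi)$ is defined as well.

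To compare $\beta_s(\chi)$ and $\beta_u(\chi)$, evaluate both at an arbitrary $e\in E(S)$. The key order-theoretic input is the standard fact that $u\le s$ is equivalent to $u=s u^*u$. Writing $f:=u^*u$, we get $u^*eu=(sf)^*e(sf)=f s^*es f$. Because $\chi$ is a semilattice homomorphism and idempotents commute, and since $s^*es\in E(S)$,
\[
\chi(u^*eu)=\chi(f)\,\chi(s^*es)\,\chi(f)=\chi(s^*es),
\]
where the last equality uses $\chi(f)=1$. Hence $\beta_u(\chi)(e)=\beta_s(\chi)(e)$ for all $e$, which proves the claim and finishes the argument.

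There is no real obstacle here. The only care needed is to justify the identity $u=su^*u$ from the stated definition of the order, $u\le s$ iff $uu^*s=u$. It is a standard fact (see \cite{Law}) that this is equivalent to $u=su^*u$, and it can be verified directly: $su^*u=s(uu^*s)^*u=ss^*uu^*u=ss^*u$, and $ss^*u=ss^*uu^*s=uu^*ss^*s=uu^*s=u$, using that idempotents commute.
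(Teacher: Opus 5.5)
Your proposal is correct and follows essentially the same route as the paper: pick $u\le s,t$ with $\chi(u^*u)=1$, use $u=su^*u$ to get $\chi(u^*eu)=\chi(u^*u)\chi(s^*es)\chi(u^*u)=\chi(s^*es)$, and conclude $\beta_s(\chi)=\beta_u(\chi)=\beta_t(\chi)$. Your extra checks, that $\chi\in D_S(s^*s)$ and that $u=su^*u$ follows from $uu^*s=u$, are correct details the paper leaves implicit.
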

	
	\begin{proof}
		Let $u\le s,t$ with $\chi(u^*u)=1$, and let $e\in E(S)$. Then we have
		\[
		\beta_s(\chi)(e) = \chi(s^*es)= \chi(u^*u)\chi(s^*es)\chi(u^*u)= \chi((su^*u)^*esu^*u)= \chi(u^*eu)= \beta_u(\chi)(e).
		\]
		Hence $\beta_s(\chi)=\beta_u(\chi)$, and similarly we have $\beta_t(\chi)=\beta_u(\chi)$. The result follows.
	\end{proof}
	
	\begin{lemma}\label{lem:cliffconsequences}
		Let $N$ be a Clifford inverse semigroup, and let $\chi \in \widehat{E}(N)$.
		\begin{enumerate}
			\item If $a \in N$ and $\chi(a^*a)=1$, then $\beta_a(\chi)=\chi$.
			\item If $a,b \in N$ and $\chi(a^*a)=\chi(b^*b)=1$, then $[a,\chi]$ and $[b,\chi]$ are composable in $\mathcal{G}(N)$ and $[a,\chi][b,\chi]=[ab,\chi]$.
			\item If $a \in N$ and $\chi(a^*a)=1$, then $[a,\chi]^{-1}=[a^*,\chi]$.
		\end{enumerate}
	\end{lemma}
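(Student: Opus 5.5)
The three parts follow directly from the definitions of $\beta$ and of the germ groupoid, once we use that idempotents in $N$ are central. The key identity is that for $a\in N$ and $e\in E(N)$ we have $a^*ea = ea^*a$. Indeed, $a^*a\in E(N)$ commutes with $e$, and $a$ commutes with every idempotent, so $a^*ea = a^*ae = ea^*a$.

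For part (1), we assume $\chi(a^*a)=1$, so $\chi\in D_N(a^*a)$ and $\beta_a(\chi)$ is defined. For any $e\in E(N)$ the key identity and multiplicativity of $\chi$ give
\[
\beta_a(\chi)(e)=\chi(a^*ea)=\chi(e)\chi(a^*a)=\chi(e).
\]
Hence $\beta_a(\chi)=\chi$.

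For part (2), the product $[a,\chi][b,\chi]$ is defined precisely when $\chi=\beta_b(\chi)$. This holds by part (1) applied to $b$. The product formula in $\mathcal{G}(N)$ then gives $[a,\chi][b,\chi]=[ab,\chi]$. For this to be a legitimate germ we also need $\chi((ab)^*ab)=1$. Centrality of idempotents gives $(ab)^*ab=b^*a^*ab=a^*a\,b^*b$, so $\chi((ab)^*ab)=\chi(a^*a)\chi(b^*b)=1$.

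For part (3), the inverse formula in $\mathcal{G}(N)$ gives $[a,\chi]^{-1}=[a^*,\beta_a(\chi)]$. By part (1) this equals $[a^*,\chi]$. Note that $\chi(aa^*)=\chi(a^*a)=1$ because $aa^*=a^*a$ in a Clifford semigroup, so this germ is well defined.

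The only step with any content is the centrality identity $a^*ea=ea^*a$. Everything else is unwinding the definitions of the germ groupoid.
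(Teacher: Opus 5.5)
Your proposal is correct and follows the paper's proof essentially line by line: the centrality identity $a^*ea=ea^*a$ gives (1), and (2) and (3) then follow from (1) together with the composability condition and the inverse formula in $\mathcal{G}(N)$. Your extra checks that $[ab,\chi]$ and $[a^*,\chi]$ are legitimate germs do not appear in the paper, but they are harmless and in fact automatic from the groupoid structure.
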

	
	\begin{proof}
		Since $N$ is Clifford, every idempotent is central, and $aa^*=a^*a$ for all $a\in N$. For (1), let $a\in N$ with $\chi(a^*a)=1$, and let $e\in E(N)$. Then
		\[
		\beta_a(\chi)(e)=\chi(a^*ea)=\chi(ea^*a)=\chi(e)\chi(a^*a)=\chi(e),
		\]
		and hence $\beta_a(\chi)=\chi$. For (2), just note that (1) implies that $\chi=\beta_b(\chi)$ and hence $([a,\chi],[b,\chi])\in\mathcal{G}(N)^{(2)}$. The formula for the product just comes from multiplication in $\mathcal{G}(N)$. For (3), we use (1) to get $[a,\chi]^{-1}=[a^*,\beta_a(\chi)]=[a^*,\chi]$.
	\end{proof}
	
	\subsection{Extensions of inverse semigroups} We recall the work of Lausch \cite{L} (see also \cite{DK} for a thorough account of all the details), which provides a connection between extensions of inverse semigroups and some cohomological invariants. Let
	\begin{equation}\label{eq:extensionINVSMGP}
		\xymatrix{N\ar[r]^{\iota}  & T \ar[r]^{\pi} & S\ar@/^8pt/ [l] ^{j}}
	\end{equation}
	be an extension of inverse semigroups. Recall that $\iota$ and $\pi$ are required to be idempotent-bijective, with $j:S\rightarrow T$ a section of $\pi$. Since $\pi$ is idempotent-bijective, we can assume that $j_{\vert E(S)}:E(S)\rightarrow E(T)$. Moreover, in this case $j_{\vert E(S)}=(\pi_{\vert E(T)})^{-1}$ is a homomorphism. We recall the following facts about this set up.
	\begin{enumerate}
		\item $N$ is a Clifford semigroup, and so idempotents are central, and $aa^*=a^*a$ for every $a\in N$.
		\item For each $t\in T$ there exists a unique $a_t\in N$ such that $t=\iota(a_t)j(\pi(t))$ and $\iota (a_ta_t^*)=j(\pi(t))j(\pi(t))^*$. We have $a_t=\iota^{-1}(tj(\pi(t))^*)$.
		\item The map $\alpha:=\iota^{-1}\circ j_{\vert E(S)}: E(S)\rightarrow E(N)$ is an isomorphism.
		\item Since $\pi(j(s)j(t))=\pi(j(st))$ for all $s,t\in S$, there exists a unique $f(s,t)\in N$ such that $j(s)j(t)=\iota(f(s,t))j(st)$. Moreover, $f(s,t)\in N_{\alpha(st(st)^*)}$, which is a group for any $st\in S$.
		\item If $\text{end} (N)$ denotes the relatively invertible endomorphisms of $N$ \cite[Section 8]{L} (see also \cite[Subsection 1.3]{DK}), then there is an action
		$$\lambda: S\rightarrow \text{end} (N)$$
		defined by the rule $\lambda_s(a):=\iota^{-1}(j(s)\iota (a)j(s)^*)$.
		\item There is a list of relations involving $\alpha, f$ and $\lambda$ (see \cite[Page 8]{DK}, or \cite[Proposition 3.11]{S2} for an extended version). In particular:
		\begin{enumerate}
			\item The map $f:S\times S\rightarrow N$ is a $2$-cocycle for the action $\lambda$: for any $s,t,r\in S$, we have $\lambda_s(f(t,r))f(s,tr)=f(s,t)f(st,r)$. This equation guarantees that the product defined in the inverse semigroup $N\ast_\Lambda S$ below is associative.
			\item For all $s,t\in S$ and for all $a\in N$, $(\lambda_s\circ \lambda_t)(a)=f(s,t)\lambda_{st}(a)f(s,t)^*$.
		\end{enumerate}
	\end{enumerate}
	
	We will need the following result.
	
	\begin{lemma}\label{lem:factsaboutf}
		Let $j$ be a section, and let $s,t\in S$.
		\begin{itemize}
			\item[(i)] We have $\iota(f(s,t))=j(s)j(t)j(st)^*$.
			\item[(ii)] If $j$ is order preserving, then
			\[
			p\le s,\, q\le t\implies f(p,q)\le f(s,t).
			\]
		\end{itemize}
	\end{lemma}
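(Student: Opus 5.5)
For (i), I will start from the defining relation $j(s)j(t)=\iota(f(s,t))j(st)$ and multiply on the right by $j(st)^*$. This gives
\[
j(s)j(t)j(st)^*=\iota(f(s,t))\,j(st)j(st)^*.
\]
It then suffices to show that $\iota(f(s,t))\,j(st)j(st)^*=\iota(f(s,t))$. By fact (4), $f(s,t)$ lies in the group $N_{\alpha(st(st)^*)}$, whose identity is $\alpha(st(st)^*)$. Also
\[
\iota(\alpha(st(st)^*))=j(st(st)^*)=j(st)j(st)^*,
\]
where I use two things: $\alpha=\iota^{-1}\circ j|_{E(S)}$, and $j|_{E(S)}=(\pi|_{E(T)})^{-1}$. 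The second gives $j(st(st)^*)=j(st)j(st)^*$ because $j(st)j(st)^*$ is an idempotent of $T$ mapping under $\pi$ to $st(st)^*$. Hence $\iota(f(s,t))\,j(st)j(st)^*=\iota(f(s,t)\alpha(st(st)^*))=\iota(f(s,t))$, which proves (i).

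For (ii), assume $j$ is order preserving, with $p\le s$ and $q\le t$. Then $pq\le st$, since the natural partial order is compatible with multiplication. Order preservation then gives $j(p)\le j(s)$, $j(q)\le j(t)$ and $j(pq)\le j(st)$. The order is also compatible with the involution, so $j(pq)^*\le j(st)^*$.

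Combining these, $j(p)j(q)j(pq)^*\le j(s)j(t)j(st)^*$ in $T$, and by (i) this reads $\iota(f(p,q))\le\iota(f(s,t))$. Since $\iota$ is an injective homomorphism, $\iota(x)\le\iota(y)$ means $\iota(x)=\iota(x)\iota(x)^*\iota(y)=\iota(xx^*y)$, so $x=xx^*y$, i.e. $x\le y$ in $N$. Therefore $f(p,q)\le f(s,t)$.

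The only step needing care is identifying $j(st)j(st)^*$ with $\iota$ of the identity of the group containing $f(s,t)$. This is where the assumption that $j$ restricts on idempotents to $(\pi|_{E(T)})^{-1}$ is used. The rest is standard compatibility of the natural partial order with products and inverses.
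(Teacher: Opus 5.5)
Your proof is correct and follows the paper's argument essentially step by step. For (i) you multiply the defining relation on the right by $j(st)^*$ and identify $j(st)j(st)^*$ with $\iota(\alpha(st(st)^*))$, the identity of the group containing $f(s,t)$; for (ii) you use compatibility of the natural order with products and inverses, together with (i). You are slightly more explicit than the paper in three places: you note $pq\le st$, which is needed for $j(pq)\le j(st)$; you justify $j(st(st)^*)=j(st)j(st)^*$; and you check that the order reflects back through the injective homomorphism $\iota$.
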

	
	\begin{proof}
		For (i), we use that $f(s,t)\in N_{\alpha(st(st)^*)}$ in the last implication to get
		\begin{align*}
			\iota(f(s,t))j(st)=j(s)j(t) &\implies \iota(f(s,t))j(st)j(st)^*=j(s)j(t)j(st)^*\\
			&\implies \iota(f(s,t))\iota(\alpha(st(st)^*))=j(s)j(t)j(st)^*\\
			&\implies \iota(f(s,t))=j(s)j(t)j(st)^*.
		\end{align*}
		
		For (ii), we let $p\le s$ and $q\le t$. Since $j$ is order preserving we have $j(p)\le j(s)$, $j(q)\le j(t)$, and then because multiplication and the inverse is order preserving we have $j(p)j(q)j(pq)^*\le j(s)j(t)j(st)^*$. Hence by (i) we have
		\[
		\iota(f(p,q))=j(p)j(q)j(pq)^*\le j(s)j(t)j(st)^*=\iota(f(s,t)),
		\]
		and hence $f(p,q)\le f(s,t)$.
	\end{proof}
	
	\begin{remark}\label{Remk: formal defined semigrp extension}
		A triple $\Lambda=(\alpha, \lambda, f)$ satisfying the list of properties in \cite[Page 8]{DK} is called a {\em twisted $S$-module}. To such a system we can define an inverse semigroup
		$$N\ast_\Lambda S:=\{(a,s)\in N\times S : aa^*=\alpha(ss^*)\}$$
		with product and inverse given by
		\[
		(a,s)(b,r):=(a\lambda_s(b)f(s,r),sr)\quad\text{and}\quad (a,s)^*:=(f(s^*, s)^*\lambda_{s^*}(a^*),s^*).
		\]
		Note that this means
		\begin{equation}\label{eq:rangeandsourceofas}
			(a,s)^*(a,s)=(\alpha(s^*s),s^*s)\quad\text{and}\quad (a,s)(a,s)^*=(\alpha(ss^*),ss^*).
		\end{equation}
		
		For $(b,p),(a,s)\in N\ast_\Lambda S$ we have
		\begin{equation}\label{eq:orderondecomp}
			(b,p)\le (a,s) \iff p\le s\text{ and }b\le \alpha(pp^*)a.
		\end{equation}
	\end{remark}
	
	\begin{remark}\label{rem:thevarphimap}
		Following Lausch \cite{L}, Dokuchaev and Khrypchenko \cite{DK}, we have that there is an isomorphism of inverse semigroups $\varphi\colon T\to N\ast_\Lambda S$ given by
		\[
		\varphi(t)=(\iota^{-1}(tj(\pi(t))^*),\pi(t)).
		\]
		Recall above that $a_t:=\iota^{-1}(tj(\pi(t))^*)$, and so we can write $\varphi(t)=(a_t,\pi(t))$.
		
		This isomorphism means we can deal with inverse semigroup extensions in terms of twisted crossed products of Clifford semigroups by ``arbitrary'' inverse semigroups (see, for example, \cite[Chapter 5]{Law} for a thorough account of inverse semigroup extensions).
	\end{remark}

	\section{Groupoid extensions with continuous global sections}\label{sec:withorderpreserving}
	
	Throughout this section we assume that
	\[
	\xymatrix{N\ar[r]^{\iota}  & T \ar[r]^{\pi} & S\ar@/^8pt/ [l] ^{j}}
	\]
	is an inverse semigroup extension with an {\em order-preserving} section $j:S\rightarrow T$, and $\Lambda=(\alpha,\lambda,f)$ is the  corresponding twisted $S$-module. In this section we prove the existence of a groupoid analogue $\widehat{\Lambda}=(\widehat{\alpha}, \widehat{\lambda},\widehat{f})$ of $\Lambda$, and an associated twisted crossed product groupoid $\mathcal{G}(N)\ast_{\widehat{\Lambda}}\mathcal{G}(S)$. In our main result we prove that $\mathcal{G}(T)$ is isomorphic to $\mathcal{G}(N)\ast_{\widehat{\Lambda}}\mathcal{G}(S)$, giving us a groupoid analogue of the isomorphism $T\cong N\ast_\Lambda S$.
	
	\begin{proposition}\label{prop:dualtriple}
		Suppose we have an extension of inverse semigroups as in \eqref{eq:extensionINVSMGP}, where the section $j\colon S\to T$ is order preserving. Let $\Lambda=(\alpha,\lambda,f)$ be the associated twisted $S$-module. There exists a triple $\widehat{\Lambda}:=(\widehat{\alpha}, \widehat{\lambda},\widehat{f})$ such that
		\begin{itemize}
			\item[(i)] $\widehat{\alpha}\colon \widehat{E}(S)\to \widehat{E}(N)$ given by $\widehat{\alpha}(\chi)= \chi\circ \alpha^{-1}$ is a homeomorphism,
			\item[(ii)] for each $[s,\chi]\in\mathcal{G}(S)$, there is a well-defined map $\widehat{\lambda}_{[s,\chi]}\colon \mathcal{G}(N)_{\widehat{\alpha}(\chi)}\to \mathcal{G}(N)_{\widehat{\alpha}(\beta_s(\chi))}$ given by
			\[
			\widehat{\lambda}_{[s,\chi]}([a,\widehat{\alpha}(\chi)])=[\lambda_s(a),\widehat{\alpha}(\beta_s(\chi))],
			\]
			\item[(iii)] the map $\widehat{f}\colon \mathcal{G}(S)^{(2)}\to \mathcal{G}(N)$ given by
			\[
			\widehat{f}([s,\beta_t(\chi)],[t,\chi])
			=
			[f(s,t),\widehat{\alpha}(\beta_{st}(\chi))]
			\]
			is well-defined and continuous,
			\item[(iv)] $[s,\chi]\mapsto \widehat{\lambda}_{[s,\chi]}$ is a well-defined continuous $\widehat{f}$-twisted action in the sense that
			\[
			\widehat{\lambda}_{g_1}\circ \widehat{\lambda}_{g_2} = \operatorname{Ad}_{\widehat{f}(g_1,g_2)}\circ \widehat{\lambda}_{g_1g_2}\quad\text{for all }(g_1,g_2)\in \mathcal{G}(S)^{(2)},
			\]
			and each $\widehat{\lambda}_{[s,\chi]}$ is a homeomorphism, and
			\item[(v)] $\widehat{f}$ is a $2$-cocycle for $\widehat{\lambda}$ in the sense that
			\[
			\widehat{\lambda}_{g_1}(\widehat{f}(g_2,g_3))\widehat{f}(g_1,g_2g_3)=\widehat{f}(g_1,g_2)\widehat{f}(g_1g_2,g_3)\quad\text{for all }(g_1,g_2,g_3)\in\mathcal{G}(S)^{(3)}.
			\]
		\end{itemize}
	\end{proposition}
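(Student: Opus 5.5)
We verify each item directly from the germ description of $\mathcal{G}(N)$, $\mathcal{G}(S)$, using the Lausch relations and Lemma~\ref{lem:factsaboutf}.

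For (i), $\alpha\colon E(S)\to E(N)$ is a semilattice isomorphism, so precomposition with $\alpha^{-1}$ is a bijection $\widehat{E}(S)\to\widehat{E}(N)$ with inverse $\psi\mapsto\psi\circ\alpha$. Moreover $\widehat{\alpha}(D_S(e))=D_N(\alpha(e))$, so basic compact open sets correspond and $\widehat{\alpha}$ is a homeomorphism. Since $N$ is Clifford, Lemma~\ref{lem:cliffconsequences} shows that $\mathcal{G}(N)$ is a group bundle whose fibre over $\psi$ is $\{[a,\psi]:\psi(a^*a)=1\}$. Thus $\mathcal{G}(N)_{\widehat{\alpha}(\chi)}$ consists of the germs $[a,\widehat{\alpha}(\chi)]$ with $\chi(\alpha^{-1}(a^*a))=1$.

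For (ii) there are two checks.
\begin{itemize}
\item \emph{Independence of representatives.} Suppose $[s,\chi]=[t,\chi]$ via $u\le s,t$, and $[a,\psi]=[b,\psi]$ via $c\le a,b$. Since $j$ is order preserving, $\lambda_u(c)=\iota^{-1}(j(u)\iota(c)j(u)^*)\le\lambda_s(a),\lambda_t(b)$. By Lemma~\ref{lem:resultaboutbeta}, $\beta_s(\chi)=\beta_u(\chi)=\beta_t(\chi)$.
\item \emph{Nondegeneracy of the new germ.} We need $\widehat{\alpha}(\beta_u(\chi))(\lambda_u(c)^*\lambda_u(c))=1$. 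Writing this idempotent as $\lambda_u(c^*c)=\alpha(u\,\alpha^{-1}(c^*c)\,u^*)$, it equals $\chi(u^*\alpha^{-1}(c^*c)u)$. Using $\chi(u^*u)=1$ and multiplicativity, this is $\chi(\alpha^{-1}(c^*c))=1$.
\end{itemize}
This also shows the image lies in the stated fibre.

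For (iii), composable pairs in $\mathcal{G}(S)$ have the form $([s,\beta_t(\chi)],[t,\chi])$. If both factors are represented differently, with witnesses $p\le s,s'$ and $q\le t,t'$, then by Lemma~\ref{lem:factsaboutf}(ii) $f(p,q)\le f(s,t),f(s',t')$. The $\beta$'s agree by Lemma~\ref{lem:resultaboutbeta}, and $\widehat{\alpha}(\beta_{pq}(\chi))$ evaluates to $1$ on $f(p,q)^*f(p,q)=\alpha(pq(pq)^*)$, because $\chi((pq)^*pq)=1$. So $\widehat{f}$ is well defined.

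For continuity, note that $\widehat{f}$ maps the basic open set $(D_S(s)\times D_S(t))\cap\mathcal{G}(S)^{(2)}$ into $D_N(f(s,t))$. By the same order argument it is locally the composite of homeomorphisms $d$ and $\beta$ with $D_N(f(p,q))$ for $p\le s$, $q\le t$. Preimages of basic sets $D_N(c)$ are therefore unions of such sets, and $\widehat{f}$ is continuous.

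For (iv), the twisted action identity is the germ version of relation (6)(b), $\lambda_s\lambda_t(a)=f(s,t)\lambda_{st}(a)f(s,t)^*$. To apply it, compute both sides on $[a,\widehat{\alpha}(\chi)]$ for $g_1=[s,\beta_t(\chi)]$ and $g_2=[t,\chi]$. Then multiply germs in the group fibre over $\widehat{\alpha}(\beta_{st}(\chi))$ using Lemma~\ref{lem:cliffconsequences}(2),(3).

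Each $\widehat{\lambda}_{[s,\chi]}$ is a bijection between fibres with inverse essentially $\widehat{\lambda}_{[s^*,\beta_s(\chi)]}$, corrected by $\operatorname{Ad}$ of an $\widehat{f}$ value. Concretely, $\lambda_{s^*}\lambda_s=\operatorname{Ad}_{f(s^*,s)}\lambda_{s^*s}$, and $\lambda_{s^*s}$ acts as the identity on the relevant germs. Fibres of a group bundle carry the discrete relative topology in the ample setting, so homeomorphism is immediate once bijectivity holds. Joint continuity of $(g,h)\mapsto\widehat{\lambda}_g(h)$ follows as in (iii): on $D_S(s)\times D_N(a)$ it lands in $D_N(\lambda_s(a))$ and is locally given by $\beta$.

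For (v), we apply the semigroup cocycle identity (6)(a) to representatives $g_1=[s,\beta_{tr}(\chi)]$, $g_2=[t,\beta_r(\chi)]$, $g_3=[r,\chi]$, and multiply germs in the fibre over $\widehat{\alpha}(\beta_{str}(\chi))$. The main obstacle is bookkeeping in (iv) and (v): confirming that each factor is a germ at the same character, so that Lemma~\ref{lem:cliffconsequences} applies. This needs identities such as $\beta_s(\beta_t(\chi))=\beta_{st}(\chi)$ together with $\lambda_s$ intertwining $\widehat{\alpha}$ with $\beta_s$. The latter is the computation already done in (ii), and I would isolate it as a preliminary claim.
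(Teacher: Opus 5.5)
Your proposal is correct. It follows the paper's item-by-item structure: Lemma~\ref{lem:factsaboutf}(ii) for $\widehat f$, Lemma~\ref{lem:cliffconsequences} to multiply germs at a common character, and the Lausch identities for (iv) and (v). Three steps, however, are argued differently.

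First, you change the representatives of $[s,\chi]$ and $[a,\widehat\alpha(\chi)]$ at once, using the single witness $\lambda_u(c)$ and the monotonicity of $(u,c)\mapsto j(u)\iota(c)j(u)^*$. The paper handles the change of $s$ separately in (iv), via $v=\lambda_u(a\alpha(u^*u))$ and $j(u)=j(s)j(u^*u)$, so your version is shorter.

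Second, for bijectivity you apply the twisted-action identity at $(g^{-1},g)$, which gives $\widehat\lambda_{g^{-1}}\circ\widehat\lambda_g=\operatorname{Ad}_{\widehat f(g^{-1},g)}$. The paper instead asserts $\lambda_{s^*}\lambda_s(a)=\alpha(s^*s)a$, so that $\widehat\lambda_{[s^*,\beta_s(\chi)]}$ is an honest inverse. That computation tacitly assumes $j(s^*)j(s)=j(s^*s)$; in general $\lambda_{s^*}\lambda_s(a)=f(s^*,s)\,a\,f(s^*,s)^*$. For example, take $A_4\hookrightarrow S_4\twoheadrightarrow\mathbb{Z}_2$ with the generator sent to a $4$-cycle. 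This section is order preserving because the orders are trivial, and the composite is conjugation by a double transposition, which is not the identity on $A_4$. So your $\operatorname{Ad}$-corrected version is the right one. You should also record $\widehat\lambda_g\circ\widehat\lambda_{g^{-1}}=\operatorname{Ad}_{\widehat f(g,g^{-1})}$, since a left inverse alone gives only injectivity.

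Third, you get the homeomorphism property from the discreteness of fibres in an \'etale groupoid, which needs no continuous inverse. The paper's argument here again relies on $\widehat\lambda_{g^{-1}}$ being the inverse.

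Two minor points. In (ii) the evaluation should read $\beta_u(\chi)(u\,\alpha^{-1}(c^*c)\,u^*)=\chi(u^*u\,\alpha^{-1}(c^*c)\,u^*u)$, not $\chi(u^*\alpha^{-1}(c^*c)u)$. For continuity of $\widehat f$, the detour through $D_N(f(p,q))$ and preimages is unnecessary. On each basic set $(D_S(s)\times D_S(t))\cap\mathcal{G}(S)^{(2)}$ the map is $d$ followed by $\beta_{st}$, $\widehat\alpha$ and the inverse of $d$ on the bisection $D_N(f(s,t))$, and continuity on the members of an open cover suffices.
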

	
	\begin{proof}
		Since $\alpha:E(S)\to E(N)$ is a semilattice isomorphism, $\widehat{\alpha}$ is well defined, and its inverse is given by $\widehat{\alpha}^{-1}(\psi)=\psi\circ \alpha$. For each $e\in E(N)$ we have
		\[
		\widehat{\alpha}^{-1}(D_N(e))
		=\{\chi\in \widehat{E}(S):(\chi\circ \alpha^{-1})(e)=1\}
		=D_S(\alpha^{-1}(e)),
		\]
		and so $\widehat{\alpha}$ is continuous. Similarly, for each $f\in E(S)$ we have $\widehat{\alpha}(D_S(f))=D_N(\alpha(f))$, and hence $\widehat{\alpha}$ is a homeomorphism. So (i) holds.
		
		For (ii), we first check that, for each $[s,\chi]\in \mathcal{G}(S)$ and $[a,\widehat{\alpha}(\chi)]\in \mathcal{G}(N)_{\widehat{\alpha}(\chi)}$, we have $[\lambda_s(a),\widehat{\alpha}(\beta_s(\chi))]\in \mathcal{G}(N)_{\widehat{\alpha}(\beta_s(\chi))}$. This follows because
		\begin{align*}
			\widehat{\alpha}(\beta_s(\chi))(\lambda_s(a)^*\lambda_s(a)) &= \widehat{\alpha}(\beta_s(\chi))(\lambda_s(a^*a)) \nonumber \\
			&= \beta_s(\chi)(\alpha^{-1}(\iota^{-1}(j(s)\iota(a^*a)j(s)^*))) \nonumber\\
			&= \beta_s(\chi)(s\alpha^{-1}(a^*a)s^*) \nonumber \\
			&=\chi(s^*s\alpha^{-1}(a^*a)s^*s) \nonumber \\
			&=\chi(s^*s)\widehat{\alpha}(\chi)(a^*a)\chi(s^*s) \nonumber \\
			&= 1.
		\end{align*}
		
		We now check that the map $\widehat{\lambda}_{[s,\chi]}$ is well defined. So assume $[a,\widehat{\alpha}(\chi)]=[b,\widehat{\alpha}(\chi)]$, and we need to prove that $[\lambda_s(a),\widehat{\alpha}(\beta_s(\chi))]=[\lambda_s(b),\widehat{\alpha}(\beta_s(\chi))]$. We know there exists
		$c\le a,b$ such that $\widehat{\alpha}(\chi)(c^*c)=\chi(\alpha^{-1}(c^*c))=1$. Since $j$ is order preserving, then so is $\lambda_s$, and we have $\lambda_s(c)\le \lambda_s(a),\lambda_s(b)$, and we claim that $\widehat{\alpha}(\beta_s(\chi))(\lambda_s(c)^*\lambda_s(c))=1$. To see this, first note that for $e\in E(N)$ we have
		\[
		\alpha^{-1}(\lambda_s(e))=\pi|_{E(T)}\circ\iota(\iota^{-1}(j(s)\iota(e)j(s)^*)) =s\alpha^{-1}(e)s^*.
		\]
		Hence
		\begin{align*}
			\widehat{\alpha}(\beta_s(\chi))(\lambda_s(c)^*\lambda_s(c))&=\chi(s^*\alpha^{-1}(\lambda_s(c^*c))s)\\
			&=\chi(s^*s\alpha^{-1}(c^*c)s^*s)\\
			&=\chi(s^*s)\chi(\alpha^{-1}(c^*c))\\
			&=1,
		\end{align*}
		and the claim holds. Hence $[\lambda_s(a),\widehat{\alpha}(\beta_s(\chi))]=[\lambda_s(b),\widehat{\alpha}(\beta_s(\chi))]$, and so $\widehat{\lambda}_{[s,\chi]}$ is well defined.
		
		For (iii), first note that since $f(s,t)^*f(s,t)=\alpha(st(st)^*)$, it follows that
		\[
		\widehat{\alpha}(\beta_{st}(\chi))(f(s,t)^*f(s,t))=\widehat{\alpha}(\beta_{st}(\chi))(\alpha(st(st)^*))=\beta_{st}(\chi)(st(st)^*)=\chi((st)^*st)=1.
		\]
		We now check that the formula for $\widehat{f}$ is well defined. Suppose $[t_1,\chi]=[t_2,\chi]$ and $[s_1,\beta_{t_1}(\chi)]=[s_2,\beta_{t_2}(\chi)]$. We need to prove that
		\[
		[f(s_1,t_1),\widehat{\alpha}(\beta_{s_1t_1}(\chi))]=[f(s_2,t_2),\widehat{\alpha}(\beta_{s_2t_2}(\chi))].
		\]
		Since $[s_1t_1,\chi]=[s_2t_2,\chi]$ we know from Lemma~\ref{lem:resultaboutbeta} that $\beta_{s_1t_1}(\chi)=\beta_{s_2t_2}(\chi)$. Hence $\widehat{\alpha}(\beta_{s_1t_1}(\chi))=\widehat{\alpha}(\beta_{s_2t_2}(\chi))$.
		
		Now let $q\le t_1,t_2$ with $\chi(q^*q)=1$, and $p\le s_1,s_2$ with $\beta_{t_1}(\chi)(p^*p)=1$. We claim that $f(p,q)\le f(s_1,t_1),f(s_2,t_2)$ and
		\[
		\widehat{\alpha}(\beta_{s_1t_1}(\chi))(f(p,q)^*f(p,q))=1.
		\]
		The first assertion follows from Lemma~\ref{lem:factsaboutf}(ii). For the second, first note that since $f(p,q)\in N_{\alpha(pq(pq)^*)}$ we have $f(p,q)^*f(p,q)=\alpha(pq(pq)^*)$, and hence
		\[
		\widehat{\alpha}(\beta_{s_1t_1}(\chi))(f(p,q)^*f(p,q))=\beta_{s_1t_1}(\chi)(pq(pq)^*).
		\]
		Using the inverse semigroup identity $x\le y\implies y^*x=x^*x$ we get
		\begin{align*}
			\beta_{s_1t_1}(\chi)(pq(pq)^*) &= \chi((s_1t_1)^*pq(pq)^*s_1t_1)\\
			&= \chi((pq)^*pq)\\
			&=\chi((t_1q^*q)^*p^*pt_1q^*q)\\
			&=\chi(q^*q)\chi(t_1^*p^*pt_1)\chi(q^*q)\\
			&=\chi(q^*q)\beta_{t_1}(\chi)(p^*p)\chi(q^*q)\\
			&=1,
		\end{align*}
		and so the claim holds. Hence $\widehat{f}([s,\beta_t(\chi)],[t,\chi])=[f(s,t),\widehat{\alpha}(\beta_{st}(\chi))]$ is well defined.
		
		We now prove that $\hat f$ is continuous. Fix $s,t\in S$. Note that on the set
		\[
		\bigl(D_S(s)\times D_S(t)\bigr)\cap \GrpG(S)^{(2)}
		= \{([s,\beta_t(\chi)],[t,\chi]) : \chi\in D_S((st)^*st)\},
		\]
		$\widehat{f}$ is the composition
		\[
		([s,\beta_t(\chi)],[t,\chi]) \mapsto \chi \mapsto \beta_{st}(\chi) \mapsto \widehat{\alpha}(\beta_{st}(\chi)) \mapsto [f(s,t),\widehat{\alpha}(\beta_{st}(\chi))]
		\]
		which are all continuous. Hence $\widehat{f}$ is continuous on $\bigl(D_S(s)\times D_S(t)\bigr)\cap \GrpG(S)^{(2)}$, and since these sets are a basis for $\GrpG(S)^{(2)}$, the continuity of $\widehat{f}$ follows.
		
		To prove (iv), we first check that if $[s,\chi]=[t,\chi]$, then $\widehat{\lambda}_{[s,\chi]}=\widehat{\lambda}_{[t,\chi]}$. Let $[a,\widehat{\alpha}(\chi)]\in \mathcal{G}(N)_{\widehat{\alpha}(\chi)}$. We need to show that $[\lambda_s(a),\widehat{\alpha}(\beta_s(\chi))]=[\lambda_t(a),\widehat{\alpha}(\beta_t(\chi))]$. To see this, first let $u\le s,t$ with $\chi(u^*u)=1$. Then $u=su^*u=tu^*u$, and for each $e\in E(N)$ we have
		\[
		\beta_u(\chi)(e)=\chi(u^*eu)=\chi((su^*u)^*esu^*u)=\chi(u^*u)\chi(s^*es)\chi(u^*u)=\beta_s(\chi)(e).
		\]
		Hence $\beta_s(\chi)=\beta_u(\chi)$, which is also $\beta_t(\chi)$, and so $\widehat{\alpha}(\beta_s(\chi))=\widehat{\alpha}(\beta_t(\chi))$. Now to finish the proof that $[\lambda_s(a),\widehat{\alpha}(\beta_s(\chi))]=[\lambda_t(a),\widehat{\alpha}(\beta_t(\chi))]$ we claim that $v:=\lambda_u(a\alpha(u^*u))$ satisfies
		\[
		v\le \lambda_s(a),\lambda_t(a)\quad\text{and}\quad \widehat{\alpha}(\beta_s(\chi))(v^*v)=1.
		\]
		To see that $v\le \lambda_s(a)$ we first note that since $j$ is order preserving, we have $j(u)=j(su^*u)=j(s)j(u^*u)$ by \cite{S2}. Then
		\begin{align*}
			\iota(v) &= j(u)\iota(a\alpha(u^*u))j(u)^*\\
			&=j(s)j(u^*u)\iota(a\alpha(u^*u))j(u^*u)j(s)^*\\
			&=j(s)\iota(\alpha(u^*u))\iota(a\alpha(u^*u))\iota(\alpha(u^*u))j(s)^*\\
			&=j(s)\iota(a\alpha(u^*u))j(s)^*,
		\end{align*}
		which means that $v=\lambda_s(a\alpha(u^*u))$. But since $a\alpha(u^*u)\le a$ and $\lambda_s$ is order preserving, this means $v\le \lambda_s(a)$. A similar argument gives $v\le \lambda_t(a)$.
		
		Now, for any $e\in E(N)$ we use that $\chi(u^*u)=1$ to get
		\begin{align*}
			\widehat{\alpha}(\beta_u(\chi))(\lambda_u(e))=\beta_u(\chi)(\alpha^{-1}(\lambda_u(e)))=\beta_u(\chi)(u\alpha^{-1}(e)u^*)&=\chi(u^*u\alpha^{-1}(e)u^*u)\\
			&=\chi(\alpha^{-1}(e))\\
			&=\widehat{\alpha}(\chi)(e).
		\end{align*}
		Using this identity with $e=\alpha(u^*u)a^*a\alpha(u^*u)$ we get
		\begin{align*}
			\widehat{\alpha}(\beta_s(\chi))(v^*v) &= \widehat{\alpha}(\beta_u(\chi))(v^*v) \\
			&= \widehat{\alpha}(\beta_u(\chi))(\lambda_u(\alpha(u^*u)a^*a\alpha(u^*u))) \\
			&= \widehat{\alpha}(\chi)(\alpha(u^*u)a^*a\alpha(u^*u))\\
			&=\chi(u^*u)\widehat{\alpha}(\chi)(a^*a)\chi(u^*u)\\
			&=1.
		\end{align*}
		This proves the claim, and so we see that $[s,\chi]\mapsto \widehat{\lambda}_{[s,\chi]}$ is well defined.
		
		Next, observe that the restriction of
		\[
		\widehat\lambda:\{([s,\chi],[a,\widehat{\alpha}(\chi)]) : s\in S,a\in N, \chi\in D_S(s^*s)\cap D_S(\alpha^{-1}(a^*a))\} \to \GrpG(N)
		\]
		given by $\widehat{\lambda}([s,\chi],[a,\widehat{\alpha}(\chi)])=\widehat{\lambda}_{[s,\chi]}([a,\widehat{\alpha}(\chi)])$ to the set
		\[
		Z_{s,a}:= \{([s,\chi],[a,\widehat{\alpha}(\chi)]) : \chi\in D_S(s^*s)\cap D_S(\alpha^{-1}(a^*a))\}
		\]
		is the composition of the continuous maps
		\[
		([s,\chi],[a,\widehat{\alpha}(\chi)])\mapsto \chi\mapsto \beta_s(\chi)\mapsto \widehat{\alpha}(\beta_s(\chi)) \mapsto [\lambda_s(a),\widehat{\alpha}(\beta_s(\chi))].
		\]
		Since the sets $Z_{s,a}$ are a basis of compact open sets for the domain of $\widehat{\lambda}$, we see that $\widehat{\lambda}$ is continuous.
		
		We now check that $\widehat{\lambda}$ is $\widehat{f}$-twisted. Let $g_1=[s,\beta_t(\chi)]$ and $g_2=[t,\chi]$ be composable elements of $\mathcal{G}(S)$. Then
		\begin{align}
			\widehat{\lambda}_{g_1}\bigl(\widehat{\lambda}_{g_2}([a,\widehat{\alpha}(\chi)])\bigr)
			&=
			\widehat{\lambda}_{[s,\beta_t(\chi)]}\bigl([\lambda_t(a),\widehat{\alpha}(\beta_t(\chi))]\bigr)\nonumber \\
			&=
			[\lambda_s(\lambda_t(a)),\widehat{\alpha}(\beta_s\beta_t(\chi))] \nonumber \\
			&=
			[\lambda_s(\lambda_t(a)),\widehat{\alpha}(\beta_{st}(\chi))]. \label{eq:1forlambdatwistedaction}
		\end{align}
		By the identity $\lambda_s\circ\lambda_t = \operatorname{Ad}_{f(s,t)}\circ \lambda_{st}$, this is
		\[
		[f(s,t)\lambda_{st}(a)f(s,t)^*,\widehat{\alpha}(\beta_{st}(\chi))].
		\]
		On the other hand, we have
		\[
		\widehat{f}(g_1,g_2)=[f(s,t),\widehat{\alpha}(\beta_{st}(\chi))]
		\]
		and
		\[
		\widehat{\lambda}_{g_1g_2}([a,\widehat{\alpha}(\chi)])
		=
		\widehat{\lambda}_{[st,\chi]}([a,\widehat{\alpha}(\chi)])
		=
		[\lambda_{st}(a),\widehat{\alpha}(\beta_{st}(\chi))].
		\]
		Hence
		\begin{align}
			\operatorname{Ad}_{\widehat{f}(g_1,g_2)}\bigl(\widehat{\lambda}_{g_1g_2}([a,\widehat{\alpha}(\chi)])\bigr)
			&=
			\widehat{f}(g_1,g_2)\,\widehat{\lambda}_{g_1g_2}([a,\widehat{\alpha}(\chi)])\,\widehat{f}(g_1,g_2)^{-1} \nonumber \\
			&=
			[f(s,t),\widehat{\alpha}(\beta_{st}(\chi))]
			[\lambda_{st}(a),\widehat{\alpha}(\beta_{st}(\chi))]
			[f(s,t),\widehat{\alpha}(\beta_{st}(\chi))]^{-1}.\label{eq:2forlambdatwistedaction}
		\end{align}
		To see that \eqref{eq:1forlambdatwistedaction} and \eqref{eq:2forlambdatwistedaction} agree, first note that for $\eta=\widehat{\alpha}(\beta_{st}(\chi))$ we have already seen that
		\[
		\eta(f(s,t)^*f(s,t))=1\quad\text{and}\quad \eta(\lambda_{st}(a)^*\lambda_{st}(a))=1
		\]
		(in the proofs of (iii) and (ii), respectively). This means we can apply Lemma~\ref{lem:cliffconsequences} to continue the calculation giving \eqref{eq:2forlambdatwistedaction} to get
		\[
		\operatorname{Ad}_{\widehat{f}(g_1,g_2)}\bigl(\widehat{\lambda}_{g_1g_2}([a,\widehat{\alpha}(\chi)])\bigr)=
		[f(s,t),\eta][\lambda_{st}(a),\eta][f(s,t)^*,\eta]=[f(s,t)\lambda_{st}(a)f(s,t)^*,\eta],
		\]
		which is the right-hand side of \eqref{eq:1forlambdatwistedaction}. Hence
		\[
		\widehat{\lambda}_{g_1}\circ \widehat{\lambda}_{g_2}
		=
		\operatorname{Ad}_{\widehat{f}(g_1,g_2)} \circ \widehat{\lambda}_{g_1g_2}.
		\]
		
		We now show that $\widehat{\lambda}_{[e,\chi]}$ is trivial for each $[e,\chi]\in\mathcal{G}(S)^{(0)}$. So we show that we have $\widehat{\lambda}_{[e,\chi]}([a,\widehat{\alpha}(\chi)])=[a,\widehat{\alpha}(\chi)]$ for all $[a,\widehat{\alpha}(\chi)]\in\mathcal{G}(N)_{\widehat{\alpha}(\chi)}$. Note that we have $\chi(e)=1$, and it follows that $\beta_e(\chi)=\chi$. For $a\in N$ we also have $\lambda_e(a)=\alpha(e)a\le a$, and hence
		\[
		\widehat{\alpha}(\chi)(\lambda_e(a)^*\lambda_e(a))=\widehat{\alpha}(\chi)(\alpha(e)a^*a)=\chi(e)\widehat{\alpha}(\chi)(a^*a)=1.
		\]
		This means $[\lambda_e(a),\widehat{\alpha}(\chi)]=[a,\widehat{\alpha}(\chi)]$, and so we have
		\[
		\widehat{\lambda}_{[e,\chi]}([a,\widehat{\alpha}(\chi)])=[\lambda_e(a),\widehat{\alpha}(\beta_e(\chi))]=[a,\widehat{\alpha}(\chi)]
		\]
		So $\widehat{\lambda}_{[e,\chi]}$ acts as the identity on $\mathcal{G}(N)_{\widehat{\alpha}(\chi)}$.
		
		We now show that $\widehat\lambda_{[s,\chi]}$ is a bijection with inverse $\widehat\lambda_{[s^*,\beta_s(\chi)]}$. First note that
		\begin{align*}
			\lambda_{s^*}(\lambda_s(a))=\lambda_{s^*}(\iota^{-1}(j(s)\iota(a)j(s)^*))&=\iota^{-1}(j(s^*)j(s)\iota(a)j(s)^*j(s^*)^*)\\
			&=\alpha(s^*s)a\alpha(s^*s)\\
			&=\alpha(s^*s)a,
		\end{align*}
		where the last equality holds because idempotents in $N$ are central. So for $[a,\widehat{\alpha}(\chi)]\in \GrpG(N)_{\widehat{\alpha}(\chi)}$ we have
		\[
		\widehat\lambda_{[s^*,\beta_s(\chi)]}(\widehat\lambda_{[s,\chi]}([a,\widehat{\alpha}(\chi)]))=[\lambda_{s^*}(\lambda_s(a)),\widehat{\alpha}(\chi)]=[\alpha(s^*s)a,\widehat{\alpha}(\chi)].
		\]
		Since $\alpha(s^*s)a\le a$ and
		\[
		\widehat\alpha(\chi)\bigl((\alpha(s^*s)a)^*\alpha(s^*s)a\bigr)
		= \widehat\alpha(\chi)\bigl(a^*\alpha(s^*s)a\bigr)
		= \widehat\alpha(\chi)\bigl(\alpha(s^*s)a^*a\bigr)
		= \chi(s^*s)\widehat\alpha(\chi)(a^*a)
		= 1,
		\]
		we have
		\[
		\widehat\lambda_{[s^*,\beta_s(\chi)]}(\widehat\lambda_{[s,\chi]}([a,\widehat{\alpha}(\chi)]))=[a,\widehat{\alpha}(\chi)].
		\]
		A similar argument shows that $\widehat{\lambda}_{[s,\chi]}\circ \widehat{\lambda}_{[s^*,\beta_s(\chi)]}$ is the identity on $\GrpG(N)_{\widehat{\alpha}(\beta_s(\chi))}$, and hence $\widehat{\lambda}_{[s,\chi]}$ is bijective.
		
		Finally, both $\hat\lambda_g$ and $\hat\lambda_{g^{-1}}$ are continuous, since they are
		restrictions of the continuous map $\hat\lambda$. Therefore $\hat\lambda_g$ is a
		homeomorphism for every $g\in \GrpG(S)$.

		For (v), let $(g_1,g_2,g_3)\in\mathcal{G}(S)^{(3)}$ with
		\[
		g_1=[r,\beta_{st}(\chi)],\, g_2=[s,\beta_t(\chi)] ,\text{ and }g_3=[t,\chi].
		\]
		Then
		\begin{align*}
			\widehat{\lambda}_{g_1}(\widehat{f}(g_2,g_3))\widehat{f}(g_1,g_2g_3) &= \widehat{\lambda}_{[r,\beta_{st}(\chi)]}
			\big(\widehat{f}([s,\beta_t(\chi)],[t,\chi])\big)\,
			\widehat{f}([r,\beta_{st}(\chi)],[st,\chi]) \\
			&=
			[\lambda_r(f(s,t)),\widehat{\alpha}(\beta_{rst}(\chi))]\,
			[f(r,st),\widehat{\alpha}(\beta_{rst}(\chi))] \\
			&=
			[\lambda_r(f(s,t))f(r,st),\widehat{\alpha}(\beta_{rst}(\chi))],
		\end{align*}
		where we are again applying Lemma~\ref{lem:cliffconsequences} to get the final equality. On the other hand, using Lemma~\ref{lem:cliffconsequences} also gives
		\begin{align*}
			\widehat{f}(g_1,g_2)\widehat{f}(g_1g_2,g_3)&=\widehat{f}([r,\beta_s\beta_t(\chi)],[s,\beta_t(\chi)])\,
			\widehat{f}([rs,\beta_t(\chi)],[t,\chi]) \\
			&=
			[f(r,s),\widehat{\alpha}(\beta_{rst}(\chi))]\,
			[f(rs,t),\widehat{\alpha}(\beta_{rst}(\chi))] \\
			&=
			[f(r,s)f(rs,t),\widehat{\alpha}(\beta_{rst}(\chi))].
		\end{align*}
		Since $f$ satisfies the cocycle identity
		\[
		\lambda_r(f(s,t))f(r,st)=f(r,s)f(rs,t),
		\]
		(v) follows.
	\end{proof}
	
	\begin{definition}\label{def:gpoidtwistedaction}
		We call a triple $\widehat{\Lambda}=(\widehat{\alpha},\widehat{\lambda},\widehat{f})$ satisfying the properties outlined in Proposition~\ref{prop:dualtriple} a {\em groupoid twisted action}.
	\end{definition}
	
	\begin{proposition}\label{prop:twistedgroupoid}
		Suppose we have an extension of inverse semigroups as in \eqref{eq:extensionINVSMGP}, where the section $j\colon S\to T$ is order preserving. Let $\Lambda=(\alpha,\lambda,f)$ be the associated twisted $S$-module, and $\widehat{\Lambda}:=(\widehat{\alpha}, \widehat{\lambda},\widehat{f})$ the groupoid twisted action from Proposition~\ref{prop:dualtriple}. The set
		\begin{align*}
			\mathcal{G}(N)\ast_{\widehat{\Lambda}}\mathcal{G}(S)
			&:=
			\{(g,h)\in \mathcal{G}(N)\times \mathcal{G}(S): d_N(g)=\widehat{\alpha}(r_S(h))\}\\
			&=\left\{
			\left([a,\widehat{\alpha}(\beta_s(\chi))],[s,\chi]\right)
			\in \mathcal{G}(N)\times \mathcal{G}(S)
			: (a,s)\in N\ast_{\Lambda}S
			\right\}
		\end{align*}
		is an ample groupoid with unit space
		\[
		(\mathcal{G}(N)\ast_{\widehat{\Lambda}}\mathcal{G}(S))^{(0)}=\{([f,\widehat{\alpha}(\chi)],[e,\chi]) : \chi\in \widehat{E}(S),\, \chi(e)=1=\widehat{\alpha}(\chi)(f)\},
		\]
		which we identify with $\widehat{E}(S)$ via $([f,\widehat{\alpha}(\chi)],[e,\chi])\mapsto \chi$; domain and range maps given by
		\[
		d(g,h)=d_S(h)\quad\text{and}\quad r(g,h)=r_S(h);
		\]
		product given by
		\[
		(g_1,h_1)(g_2,h_2)
		=
		\bigl(g_1\,\widehat{\lambda}_{h_1}(g_2)\,\widehat{f}(h_1,h_2),\,h_1h_2\bigr)
		\]
		for a composable pair $(h_1,h_2)\in \mathcal{G}(S)^{(2)}$; and inverse
		\[
		(g,h)^{-1}
		=
		\bigl(\widehat{f}(h^{-1},h)^{-1}\,\widehat{\lambda}_{h^{-1}}(g^{-1}),\,h^{-1}\bigr).
		\]
		The collection
		\[
		\big\{\mathcal{D}_{a,s}
		:=
		\left\{
		\left([a,\widehat{\alpha}(\beta_s(\chi))],[s,\chi]\right)
		: \chi\in D_S(s^*s)
		\right\} : (a,s)\in N\ast_\Lambda S\big\}
		\]
        is a basis of compact open bisections for $\mathcal{G}(N)\ast_{\widehat{\Lambda}}\mathcal{G}(S)$, and the topology generated by this basis is the relative topology inherited from $\mathcal{G}(N) \times \mathcal{G}(S)$.
	\end{proposition}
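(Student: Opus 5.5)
The plan is a direct verification: reduce each groupoid identity to the corresponding identity for $\Lambda$ using Proposition~\ref{prop:dualtriple} and Lemma~\ref{lem:cliffconsequences}, and reduce each topological statement to computations on the sets $\mathcal{D}_{a,s}$.

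\emph{The two descriptions of the set.} Take $(g,h)$ with $h=[s,\chi]$ and $d_N(g)=\widehat{\alpha}(\beta_s(\chi))$, and write $g=[a,\widehat{\alpha}(\beta_s(\chi))]$. Put $a':=\alpha(ss^*)a$. Then $a'\le a$, and $\widehat{\alpha}(\beta_s(\chi))(\alpha(ss^*))=\chi(s^*s)=1$. Hence $g=[a',\widehat{\alpha}(\beta_s(\chi))]$ and $a'a'^*=\alpha(ss^*)$, so $(a',s)\in N\ast_\Lambda S$. The reverse inclusion is immediate.

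\emph{Algebraic structure.} I first record that each $\widehat{\lambda}_h$ restricts to a group homomorphism between the isotropy groups $\mathcal{G}(N)_{\widehat{\alpha}(d(h))}\to\mathcal{G}(N)_{\widehat{\alpha}(r(h))}$. This holds because $\lambda_s$ is a semigroup homomorphism and products in these groups are computed by Lemma~\ref{lem:cliffconsequences}(2). The composability condition then checks out: $g_1\widehat{\lambda}_{h_1}(g_2)\widehat{f}(h_1,h_2)$ lies in the group over $\widehat{\alpha}(r(h_1))=\widehat{\alpha}(r(h_1h_2))$, so the product lands back in the set.

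For associativity, I expand both bracketings. Using multiplicativity of $\widehat{\lambda}_{h_1}$, the twisted-action identity of Proposition~\ref{prop:dualtriple}(iv) to rewrite $\widehat{\lambda}_{h_1}\widehat{\lambda}_{h_2}$, and the cocycle identity (v), the two sides agree. This mirrors the associativity proof for $N\ast_\Lambda S$.

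For units, I need $\widehat{f}(r(h),h)$ and $\widehat{f}(h,d(h))$ to be units. This follows from $j(ss^*)j(s)=j(s)$, together with $j(s)j(s^*s)=j(s)$ (valid since $j$ is order preserving), via Lemma~\ref{lem:factsaboutf}(i): the resulting $f$-values are the idempotents $\alpha(ss^*)$, whose germs are units. The identities for $\widehat{\lambda}$ at units were proved in Proposition~\ref{prop:dualtriple}. This gives the stated unit space and the maps $d,r$. Checking the inverse formula reduces, by the same manipulations, to the cocycle identity with $(h^{-1},h,h^{-1})$.

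\emph{Topology.} The key identity, which I would verify on germs representative-wise, is
\[
\mathcal{D}_{a,s}\mathcal{D}_{b,t}=\mathcal{D}_{(a,s)(b,t)},\qquad \mathcal{D}_{a,s}^{-1}=\mathcal{D}_{(a,s)^*}.
\]
By \eqref{eq:rangeandsourceofas}, $d$ and $r$ restrict to the homeomorphisms $\chi\mapsto\chi$ and $\chi\mapsto\beta_s(\chi)$ from $\mathcal{D}_{a,s}$ onto $D_S(s^*s)$ and $D_S(ss^*)$. So each $\mathcal{D}_{a,s}$ is a compact open bisection: compact as the continuous image of $D_S(s^*s)$, and open by the next step.

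To identify the topology, I show that for basic sets $D_N(a)\times D_S(s)$ the intersection with our set equals $\mathcal{D}_{a',s'}$, where
\[
s'=\alpha^{-1}(a^*a)s,\qquad a'=\alpha(s's'^*)a.
\]
The reason is that the constraint $\beta_s(\chi)\in D_S(\alpha^{-1}(a^*a))$ is equivalent to $\chi\in D_S(s'^*s')$. Conversely, each $\mathcal{D}_{a,s}$ equals $(D_N(a)\times D_S(s))\cap(\mathcal{G}(N)\ast_{\widehat{\Lambda}}\mathcal{G}(S))$. So the $\mathcal{D}_{a,s}$ form a basis for the relative topology. The product and inverse identities above, combined with this basis, give continuity of multiplication and inversion. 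Hausdorffness of the unit space and ampleness then follow from the identification of the unit space with $\widehat{E}(S)$.

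I expect the main obstacle to be the bookkeeping in the intersection computation and the germ-level verification of $\mathcal{D}_{a,s}\mathcal{D}_{b,t}=\mathcal{D}_{(a,s)(b,t)}$. There one must repeatedly pass between germs of different representatives, using Lemma~\ref{lem:resultaboutbeta} and order-preservation of $j$.
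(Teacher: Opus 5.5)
There is one step that fails as written: your proof that the two descriptions of the set agree. With $a'=\alpha(ss^*)a$, centrality of idempotents in $N$ gives $a'a'^*=\alpha(ss^*)aa^*$. This equals $\alpha(ss^*)$ only when $\alpha(ss^*)\le aa^*$, and nothing forces that for the representative $a$ you picked. Germ representatives can always be shrunk, since $[a,\psi]=[ae',\psi]$ for every $e'\in E(N)$ with $\psi(e')=1$. So $aa^*$ may lie strictly below $\alpha(ss^*)$, and then $(a',s)\notin N\ast_\Lambda S$.

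The repair is to shrink $s$ as well, exactly as you do later in the topology step. Put $s'=\alpha^{-1}(a^*a)s\le s$ and $a'=\alpha(s's'^*)a\le a$. Then $\chi(s'^*s')=\beta_s(\chi)(\alpha^{-1}(a^*a))=\widehat{\alpha}(\beta_s(\chi))(a^*a)=1$. Hence $[s',\chi]=[s,\chi]$, and $\beta_{s'}(\chi)=\beta_s(\chi)$ by Lemma~\ref{lem:resultaboutbeta}. Moreover $a'a'^*=a^*a\,\alpha(ss^*)=\alpha(s's'^*)$ and $\widehat{\alpha}(\beta_s(\chi))(a'^*a')=\chi(s'^*s')=1$. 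So $(g,h)=([a',\widehat{\alpha}(\beta_{s'}(\chi))],[s',\chi])$ with $(a',s')\in N\ast_\Lambda S$. In other words, the set equality is a special case of your identity $(D_N(a)\times D_S(s))\cap(\mathcal{G}(N)\ast_{\widehat{\Lambda}}\mathcal{G}(S))=\mathcal{D}_{a',s'}$.

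Apart from this, your argument is correct and follows the paper's route. Associativity comes from multiplicativity of $\widehat{\lambda}_{h_1}$ together with Proposition~\ref{prop:dualtriple}(iv) and (v). The topology comes from $\mathcal{D}_{a,s}=(D_N(a)\times D_S(s))\cap(\mathcal{G}(N)\ast_{\widehat{\Lambda}}\mathcal{G}(S))$. You in fact supply several things the paper calls routine or leaves implicit:
\begin{itemize}
\item the unit computation $f(ss^*,s)=f(s,s^*s)=\alpha(ss^*)$;
\item continuity of multiplication and inversion via $\mathcal{D}_{a,s}\mathcal{D}_{b,t}=\mathcal{D}_{(a,s)(b,t)}$ and $\mathcal{D}_{a,s}^{-1}=\mathcal{D}_{(a,s)^*}$;
\item the converse computation showing that every basic relatively open set $(D_N(a)\times D_S(s))\cap(\cdots)$ is some $\mathcal{D}_{a',s'}$, which the paper's one-line basis argument needs but does not state.
\end{itemize}
The two bisection identities are direct evaluations using Lemma~\ref{lem:cliffconsequences} and the defining formulas for $\widehat{\lambda}$ and $\widehat{f}$. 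No juggling of representatives is needed there, so that part is easier than you anticipated.

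A small correction: $j(s)j(s^*s)=j(s)$ holds for every section, because $j(s^*s)=j(s)^*j(s)$ by idempotent-bijectivity of $\pi$. Order preservation enters only through the well-definedness of $\widehat{f}$ and $\widehat{\lambda}$, which you import from Proposition~\ref{prop:dualtriple}.
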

	
	\begin{proof}
		It is routine to check the well-definedness of the structure maps, closure of multiplication and inversion, and the unit/inverse domain-range identities. For associativity, let $((g_1,h_1),(g_2,h_2),(g_3,h_3))\in (\GrpG(N)\ast_{\widehat{\Lambda}}\GrpG(S))^{(3)}$. The only nontrivial check is on the first coordinate: we need to show that
		\[
		g_1\widehat{\lambda}_{h_1}(g_2)\widehat{f}(h_1,h_2)\widehat{\lambda}_{h_1h_2}(g_3)\widehat{f}(h_1h_2,h_3)
		=
		g_1\widehat{\lambda}_{h_1}(g_2\widehat{\lambda}_{h_2}(g_3)\widehat{f}(h_2,h_3))\widehat{f}(h_1,h_2h_3).
		\]
		Ignoring the $g_1$ common to the beginning of both, we use the identities in (iv) and (v) of Proposition~\ref{prop:dualtriple} to get
		\begin{align*}
			\widehat{\lambda}_{h_1}(g_2\widehat{\lambda}_{h_2}(g_3)\widehat{f}(h_2,h_3))&\widehat{f}(h_1,h_2h_3)\\
			&= \widehat{\lambda}_{h_1}(g_2)\widehat{\lambda}_{h_1}(\widehat{\lambda}_{h_2}(g_3))\widehat{\lambda}_{h_1}(\widehat{f}(h_2,h_3))\widehat{f}(h_1,h_2h_3)\\
			&= \widehat{\lambda}_{h_1}(g_2)\widehat{\lambda}_{h_1}(\widehat{\lambda}_{h_2}(g_3))\widehat{f}(h_1,h_2)\widehat{f}(h_1h_2,h_3)\\
			&= \widehat{\lambda}_{h_1}(g_2) \widehat{f}(h_1,h_2)\widehat{\lambda}_{h_1h_2}(g_3)\widehat{f}(h_1,h_2)^{-1} \widehat{f}(h_1,h_2)\widehat{f}(h_1h_2,h_3)\\
			&=\widehat{\lambda}_{h_1}(g_2)\widehat{f}(h_1,h_2)\widehat{\lambda}_{h_1h_2}(g_3)\widehat{f}(h_1h_2,h_3),
		\end{align*}
		and associativity follows. So $\GrpG(N)*_{\widehat{\Lambda}}\GrpG(S)$ is a groupoid. Since
		\[
		\mathcal{D}_{a,s}=(D_N(a)\times D_S(s))\cap (\GrpG(N)\ast_{\widehat{\Lambda}}\GrpG(S))
		\]
		for each $(a,s)\in N\ast_\Lambda S$, these sets are a basis for the relative topology on $\GrpG(N)*_{\widehat{\Lambda}}\GrpG(S)$ inherited as a subspace of $\GrpG(N) \times \GrpG(S)$; it follows that $\GrpG(N)\ast_{\widehat{\Lambda}}\GrpG(S)$ is an ample groupoid and the set of $\mathcal{D}_{a,s}$ is a basis of compact open bisections.
	\end{proof}
	
	\begin{theorem}\label{thm:mainthmwithorderpreserving}
		Suppose we have an extension of inverse semigroups as in \eqref{eq:extensionINVSMGP}, where the section $j\colon S\to T$ is order preserving. Let $\Lambda=(\alpha,\lambda,f)$ be the associated twisted $S$-module, and $\widehat{\Lambda}:=(\widehat{\alpha}, \widehat{\lambda},\widehat{f})$ the groupoid twisted action from Proposition~\ref{prop:dualtriple}. The map $\widehat{\varphi}\colon \mathcal{G}(T) \to \mathcal{G}(N)\ast_{\widehat{\Lambda}}\mathcal{G}(S)$ given by
		\[
		\widehat{\varphi}([t,\chi])
		=
		\bigl([a_t,\widehat{\alpha}(\beta_{\pi(t)}(\chi\circ j|_{E(S)}))],[\pi(t),\chi\circ j|_{E(S)}]\bigr),
		\]
		is a well-defined isomorphism of topological groupoids.
	\end{theorem}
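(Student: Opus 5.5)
We transport the semigroup isomorphism $\varphi\colon T\to N\ast_\Lambda S$ of Remark~\ref{rem:thevarphimap} to germs. Throughout write $\tilde\chi:=\chi\circ j|_{E(S)}$ for $\chi\in\widehat{E}(T)$. Since $\pi$ is idempotent-bijective with $j|_{E(S)}=(\pi|_{E(T)})^{-1}$, the map $\chi\mapsto\tilde\chi$ is a homeomorphism $\widehat{E}(T)\to\widehat{E}(S)$, by the same argument as Proposition~\ref{prop:dualtriple}(i). A short computation gives $\widetilde{\beta_t(\chi)}=\beta_{\pi(t)}(\tilde\chi)$. Indeed, for $e\in E(S)$ we have $\pi(t^*j(e)t)=\pi(t)^*e\pi(t)$, and $t^*j(e)t$ is idempotent, so $t^*j(e)t=j(\pi(t)^*e\pi(t))$.

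\textbf{Step 1: the image is correct and $\widehat\varphi$ is well defined.} First we check that $\widehat\varphi([t,\chi])$ lands in $\mathcal{G}(N)\ast_{\widehat\Lambda}\mathcal{G}(S)$. We need $\tilde\chi(\pi(t)^*\pi(t))=\chi(t^*t)=1$, and $\widehat\alpha(\beta_{\pi(t)}(\tilde\chi))(a_t^*a_t)=1$. The second follows because $a_t^*a_t=\alpha(\pi(t)\pi(t)^*)$ by fact (2). Next, for independence of representative, suppose $u\le t_1,t_2$ with $\chi(u^*u)=1$. Then $\pi(u)\le\pi(t_i)$ and $\tilde\chi(\pi(u)^*\pi(u))=1$, so the second coordinates agree; the ranges agree by Lemma~\ref{lem:resultaboutbeta}.

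For the first coordinates, we use that $j$ is order preserving. This gives $a_u=\iota^{-1}(uj(\pi(u))^*)\le\iota^{-1}(t_ij(\pi(t_i))^*)=a_{t_i}$, and $a_u^*a_u=\alpha(\pi(u)\pi(u)^*)$ evaluates to $1$ at $\widehat\alpha(\beta_{\pi(t)}(\tilde\chi))$. Equivalently, by \eqref{eq:orderondecomp}, $u\le t$ gives $\varphi(u)\le\varphi(t)$.

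\textbf{Step 2: $\widehat\varphi$ is a homomorphism.} Take composable $[t,\beta_{t'}(\chi)]$ and $[t',\chi]$. Then $\widehat\varphi([tt',\chi])$ has first coordinate $[a_{tt'},\eta]$ with $\eta=\widehat\alpha(\beta_{\pi(tt')}(\tilde\chi))$. The product of the images has first coordinate $[a_t,\eta]\,\widehat\lambda_{[\pi(t),\cdot]}([a_{t'},\cdot])\,\widehat f(\cdot,\cdot)=[a_t,\eta][\lambda_{\pi(t)}(a_{t'}),\eta][f(\pi(t),\pi(t')),\eta]$. By Lemma~\ref{lem:cliffconsequences}(2) this equals $[a_t\lambda_{\pi(t)}(a_{t'})f(\pi(t),\pi(t')),\eta]$. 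This is $[a_{tt'},\eta]$ because $\varphi$ is multiplicative: $\varphi(t)\varphi(t')=\varphi(tt')$.

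\textbf{Step 3: bijectivity.} Define the inverse as follows. For $([a,\widehat\alpha(\beta_s(\psi))],[s,\psi])$ with $(a,s)\in N\ast_\Lambda S$, send it to $[\varphi^{-1}(a,s),\psi\circ\pi|_{E(T)}]$. Explicitly, $\varphi^{-1}(a,s)=\iota(a)j(s)$. Well-definedness is the main obstacle. Two representatives $(a,s)$ and $(b,s')$ of the same point give germs agreeing only locally: there are $c\le a,b$ and $p\le s,s'$, but $(c,p)$ need not lie in $N\ast_\Lambda S$ or sit below both $(a,s)$ and $(b,s')$.

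To handle this, we pass to $e:=\alpha(pp^*)c^*c$ and $(e c,p)$, adjusting to a common idempotent. Then \eqref{eq:orderondecomp} shows $(ec,p)\le(a,s),(b,s')$ in $N\ast_\Lambda S$. It remains to check that the relevant character evaluates to $1$ on the source idempotent $(\alpha(p^*p),p^*p)$ of $(ec,p)$. Pulling this back via $\varphi^{-1}$ yields a common lower bound in $T$, giving equality of germs.

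Once the inverse is well defined, the two composites are checked to be identities directly from $\varphi^{-1}\varphi=\mathrm{id}$, together with $\chi\circ j\circ\pi|_{E(T)}=\chi$.

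\textbf{Step 4: homeomorphism.} Compute $\widehat\varphi(D_T(t))=\mathcal{D}_{a_t,\pi(t)}$, using that $\chi\mapsto\tilde\chi$ is a homeomorphism onto $D_S(\pi(t)^*\pi(t))$. The $D_T(t)$ form a basis of $\mathcal{G}(T)$, and the $\mathcal{D}_{a,s}=\widehat\varphi(D_T(\varphi^{-1}(a,s)))$ form a basis of the target by Proposition~\ref{prop:twistedgroupoid}. Hence the bijection $\widehat\varphi$ maps a basis onto a basis, so it is open and continuous, and therefore a homeomorphism.
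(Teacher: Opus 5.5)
Your route is essentially the paper's. The paper factors $\widehat{\varphi}=\psi\circ\GrpG(\varphi)$ and obtains $\GrpG(\varphi)\colon\GrpG(T)\to\GrpG(N\ast_\Lambda S)$ from Steinberg's functoriality. It then verifies $\psi$ with the same ingredients you use: Lemma~\ref{lem:cliffconsequences} together with the product in $N\ast_\Lambda S$ for multiplicativity, an explicit inverse built from a common lower bound, and the basic bisections $\mathcal{D}_{a,s}$ for the topology. Working on $\GrpG(T)$ directly just means you must supply $\widetilde{\beta_t(\chi)}=\beta_{\pi(t)}(\widetilde{\chi})$ yourself. This is the analogue of the paper's $\widetilde{\beta_{(b,t)}(\chi)}=\beta_t(\widetilde{\chi})$, and your verification of it is fine.

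One step in Step~3 fails as written. Since $N$ is Clifford, $ec=\alpha(pp^*)c$ and $(ec)(ec)^*=\alpha(pp^*)c^*c$. This equals $\alpha(pp^*)$ only if $\alpha(pp^*)\le c^*c$. So in general $(ec,p)\notin N\ast_\Lambda S$, and neither \eqref{eq:orderondecomp} nor $\varphi^{-1}$ applies to it.

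You have to cut down $p$ too: put $p':=\alpha^{-1}(c^*c)p\le p$. Then $\alpha(p'p'^*)=\alpha(pp^*)c^*c$. Also $c\le a$ gives $c=c^*c\,a$, hence $ec=\alpha(p'p'^*)a$, and likewise $ec=\alpha(p'p'^*)b$. Therefore $(ec,p')\in N\ast_\Lambda S$ and $(ec,p')\le (a,s),(b,s')$.

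The character check you deferred then has real content: you need $\psi(p'^*p')=1$. Since $\psi(p^*p)=1$ and $p\le s$, the computation in the proof of Lemma~\ref{lem:resultaboutbeta} gives $\beta_p(\psi)=\beta_s(\psi)$. Hence $\psi(p^*\alpha^{-1}(c^*c)p)=\beta_s(\psi)(\alpha^{-1}(c^*c))=\widehat{\alpha}(\beta_s(\psi))(c^*c)=1$ by the choice of $c$.

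The paper's own sketch of the well-definedness of its inverse $\Psi$ uses the same element $(\alpha(pp^*)c,p)$ and needs the same correction. With this repair, your proof is complete.
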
	
	
	\begin{proof}
		
		Since $\varphi:T\to N\ast_{\Lambda}S$ is an isomorphism of inverse semigroups,
		\cite[Theorem~3.4 and Proposition~3.5]{S2} imply that the
		induced map
		\[
		\GrpG(\varphi):\GrpG(T)\to \GrpG(N\ast_{\Lambda}S)
		\]
		given by
		\[
		\GrpG(\varphi)([t,\chi])
		=
		[\varphi(t),\,\chi\circ (\varphi|_{E(T)})^{-1}]
		\]
		is an isomorphism of topological groupoids. We will compose $\GrpG(\varphi)$ with an isomorphism from $\GrpG(N\ast_\Lambda S)$ to $\GrpG(N)\ast_{\widehat{\Lambda}}\GrpG(S)$ to get our desired map $\widehat{\varphi}$.
		
		We claim that $\psi\colon \GrpG(N\ast_\Lambda S)\to \GrpG(N)\ast_{\widehat{\Lambda}}\GrpG(S)$ given by
		\begin{equation}\label{eq:defofpsi}
			\psi([(a,s),\chi]) = ([a,\widehat{\alpha}(\beta_s(\widetilde{\chi}))],[s,\widetilde{\chi}]),
		\end{equation}
		where $\widetilde{\chi}\in \widehat{E}(S)$ is given by $\widetilde{\chi}(e):=\chi(\alpha(e),e)$, is a well-defined isomorphism of topological groupoids.
		
		We first show that $\psi$ is well defined. Suppose $[(a,s),\chi]=[(b,t),\chi]$. We need to show that
		\[
		[a,\widehat{\alpha}(\beta_s(\widetilde{\chi}))]=[b,\widehat{\alpha}(\beta_t(\widetilde{\chi}))]\quad\text{and}\quad [s,\widetilde{\chi}]=[t,\widetilde{\chi}].
		\]
		Since $[(a,s),\chi]=[(b,t),\chi]$ we know that there exists $(c,p)\in N\ast_\Lambda S$ with $(c,p)\le (a,s),(b,t)$ and $\chi((c,p)^*(c,p))=1$. From \eqref{eq:orderondecomp}, this implies that $p\le s,t$ and also that $c\le \alpha(pp^*)a,\alpha(pp^*)b$. From \eqref{eq:rangeandsourceofas} we know that $(c,p)^*(c,p)=(\alpha(p^*p),p^*p)$, and hence the latter of our two desired identities holds:
		\[
		\widetilde{\chi}(p^*p)=\chi(\alpha(p^*p),p^*p)=\chi((c,p)^*(c,p))=1,
		\]
		giving $[s,\widetilde{\chi}]=[t,\widetilde{\chi}]$. For the former identity, first note that $\beta_s(\widetilde{\chi})=r_S([s,\widetilde{\chi}])=r_S([t,\widetilde{\chi}])=\beta_t(\widetilde{\chi})$, and hence we have $\widehat{\alpha}(\beta_s(\widetilde{\chi}))=\widehat{\alpha}(\beta_t(\widetilde{\chi}))$. We now use that $\alpha^{-1}(c^*c)=p^*p=pp^*$ and that $s^*pp^*s=p^*p$ to get
		\[
		\widehat{\alpha}(\beta_s(\widetilde{\chi}))(c^*c) = \beta_s(\widetilde{\chi})(\alpha^{-1}(c^*c))=\widetilde{\chi}(s^*pp^*s)=\widetilde{\chi}(p^*p)=1.
		\]
		This shows that $[a,\widehat{\alpha}(\beta_s(\widetilde{\chi}))]=[b,\widehat{\alpha}(\beta_t(\widetilde{\chi}))]$, and hence $\psi$ is well defined.
		
		We now show that $\psi$ is multiplicative. So we claim that
		\begin{equation}\label{eq:psiismultiplicative}
			\psi([(a,s),\beta_{(b,t)}(\chi)])\psi([(b,t),\chi]) = \psi([(a,s)(b,t),\chi]).
		\end{equation}
		The right-hand side of \eqref{eq:psiismultiplicative} is
		\begin{equation}\label{eq:RHSforpsimulti}
			\psi([(a,s)(b,t),\chi])=\psi([(a\lambda_s(b)f(s,t),st),\chi])=([a\lambda_s(b)f(s,t),\widehat{\alpha}(\beta_{st}(\widetilde{\chi}))],[st,\widetilde{\chi}]).
		\end{equation}
		For the left-hand side, we first make the following observation. For an idempotent $e\in S$ we must have $(b,t)^*(\alpha(e),e)(b,t)=(\alpha(t^*et),t^*et)$ because the second coordinate is $t^*et$. Hence we have
		\[
		\widetilde{\beta_{(b,t)}(\chi)}(e) = \beta_{(b,t)}(\chi)(\alpha(e),e)= \chi((b,t)^*(\alpha(e),e)(b,t))= \chi(\alpha(t^*et),t^*et)=\beta_t(\widetilde{\chi})(e),
		\]
		and so $\widetilde{\beta_{(b,t)}(\chi)}=\beta_t(\widetilde{\chi})$. We now calculate the left-hand side of \eqref{eq:psiismultiplicative} to be
		\begin{align*}
			&\psi([(a,s),\beta_{(b,t)}(\chi)])\psi([(b,t),\chi]) \\
			&= \big([a,\widehat{\alpha}(\beta_s(\beta_t(\widetilde{\chi})))],[s,\beta_t(\widetilde{\chi})]\big)\big([b,\widehat{\alpha}(\beta_t(\widetilde{\chi}))],[t,\widetilde{\chi}]\big)\\
			&= \big( [a,\widehat{\alpha}(\beta_{st}(\widetilde{\chi}))]\widehat{\lambda}_{[s,\beta_t(\widetilde{\chi})]}([b,\widehat{\alpha}(\beta_t(\widetilde{\chi}))])\widehat{f}([s,\beta_t(\widetilde{\chi})],[t,\widetilde{\chi}]) , [s,\beta_t(\widetilde{\chi})][t,\widetilde{\chi}]\big).
		\end{align*}
		The second component is just $[st,\widetilde{\chi}]$, which matches the second component in \eqref{eq:RHSforpsimulti}. For the first component we use Lemma~\ref{lem:cliffconsequences} to get
		\begin{align*}
			[a,\widehat{\alpha}(\beta_{st}(\widetilde{\chi}))]\widehat{\lambda}_{[s,\beta_t(\widetilde{\chi})]}&([b,\widehat{\alpha}(\beta_t(\widetilde{\chi}))])\widehat{f}([s,\beta_t(\widetilde{\chi})],[t,\widetilde{\chi}])\\  &=
			[a,\widehat{\alpha}(\beta_{st}(\widetilde{\chi}))][\lambda_s(b),\widehat{\alpha}(\beta_{st}(\widetilde{\chi}))][f(s,t),\widehat{\alpha}(\beta_{st}(\widetilde{\chi}))]\\
			&= [a\lambda_s(b)f(s,t),\widehat{\alpha}(\beta_{st}(\widetilde{\chi}))],
		\end{align*}
		which is the corresponding component in \eqref{eq:RHSforpsimulti}. It follows that $\psi$ is multiplicative.
		
		To see that $\psi$ is bijective, we build its inverse $\Psi$. We claim that $\Psi\colon \GrpG(N)\ast_{\widehat{\Lambda}}\GrpG(S)\to \GrpG(N\ast_\Lambda S)$ given by
		\begin{equation}\label{eq:defofPsi}
			\Psi([a,\widehat{\alpha}(\beta_s(\widetilde{\eta}))],[s,\widetilde{\eta}])=[(a,s),\eta]
		\end{equation}
		is well defined. The proof is similar to the well-definedness of $\psi$: if we assume that
		\[
		[a,\widehat{\alpha}(\beta_s(\widetilde{\eta}))]=[b,\widehat{\alpha}(\beta_t(\widetilde{\eta}))]\quad\text{and}\quad [s,\widetilde{\eta}]=[t,\widetilde{\eta}],
		\]
		then we can find $(\alpha(pp^*)c,p)$ with $(\alpha(pp^*)c,p)\le (a,s),(b,t)$ and
		\[
		\eta((\alpha(pp^*)c,p)^*(\alpha(pp^*)c,p))=1,
		\]
		and hence $[(a,s),\eta]=[(b,t),\eta]$. So $\Psi$ is well defined.
		
		It is immediate from the formulas in \eqref{eq:defofpsi} and \eqref{eq:defofPsi} that the compositions $\psi\circ\Psi$ and $\Psi\circ\psi$ are the identity on their respective groupoids. So $\psi$ is bijective.
		
		It remains to show that $\psi$ is continuous and open. Recall that a basis of
		compact open bisections for $\GrpG(N\ast_{\Lambda}S)$ is
		\[
		\big\{D_{N\ast_{\Lambda}S}(a,s)
		:=
		\{[(a,s),\chi]:\chi\in D_{N\ast_\Lambda S}((a,s)^*(a,s))\}: (a,s)\in N\ast_{\Lambda}S\big\},
		\]
		and by Proposition~\ref{prop:twistedgroupoid} a basis of compact open bisections for
		$\GrpG(N)\ast_{\hat\Lambda}\GrpG(S)$ is the set
		\[
		\big\{\mathcal{D}_{a,s}
		:=
		\{([a,\hat\alpha(\beta_s(\eta))],[s,\eta]):\eta\in D_S(s^*s)\}:(a,s)\in N\ast_\Lambda S\big\}.
		\]
		We see from the formula \eqref{eq:defofpsi} that $\psi$ maps $D_{N\ast_{\Lambda}S}(a,s)$ onto $\mathcal{D}_{a,s}$ as long as we can prove that
		\[
		\chi\in D_{N\ast_\Lambda S}((a,s)^*(a,s)) \implies \widetilde{\chi}\in D_S(s^*s).
		\]
		But this follows as $(a,s)^*(a,s)=(\alpha(s^*s),s^*s)$, and so we have $\widetilde{\chi}(s^*s)=\chi(\alpha(s^*s),s^*s)=1$. Hence $\psi$ maps each $D_{N\ast_{\Lambda}S}(a,s)$ bijectively onto each $\mathcal{D}_{a,s}$, and so $\psi$ is a homeomorphism.
		
		Now consider the composition $\psi\circ \GrpG(\varphi)$. We first claim that for each $\chi\in \widehat{E}(T)$ we have
		\[
		\widetilde{(\chi\circ(\varphi|_{E(T)})^{-1})}=\chi\circ j|_{E(S)}.
		\]
		To see this, let $e\in E(S)$. Then
		\[
		\widetilde{\chi\circ(\varphi|_{E(T)})^{-1}}(e)= \chi\circ (\varphi|_{E(T)})^{-1}(\alpha(e),e)=\chi(j(e))=\chi\circ j|_{E(S)}(e),
		\]
		and hence $\widetilde{\chi\circ(\varphi|_{E(T)})^{-1}}=\chi\circ j|_{E(S)}$. Then we have
		\begin{align*}
			\psi\circ\GrpG(\varphi)([t,\chi]) &=\Big([a_t,\widehat{\alpha}(\beta_{\pi(t)}(\widetilde{\chi\circ(\varphi|_{E(T)})^{-1}}))] , [\pi(t),\widetilde{\chi\circ(\varphi|_{E(T)})^{-1}}] \Big)\\
			&= \bigl([a_t,\widehat{\alpha}(\beta_{\pi(t)}(\chi\circ j|_{E(S)}))],[\pi(t),\chi\circ j|_{E(S)}]\bigr).
		\end{align*}
		So defining $\widehat{\varphi}:=\psi\circ\GrpG(\varphi)$ gives us our desired isomorphism of topological groupoids.
	\end{proof}
	
	\begin{remark}\label{rem:withoutorderpreserving}
		We emphasise here that we have used the order-preserving assumption in the proofs of Proposition~\ref{prop:dualtriple} (iii) and (iv). In particular, the functions $\widehat{f}$ and $[s,\chi]\mapsto \widehat{\lambda}_{[s,\chi]}$ require $j$ to be order preserving to be well defined. So we can only induce a groupoid twisted action $\widehat{\Lambda}$ from a twisted $S$-module $\Lambda$ when $j$ is order preserving.
	\end{remark}
	
	We can now state and prove this result at the level of an arbitrary extension of ample groupoids that admits a continuous global section.
	
	\begin{theorem}[Theorem A]\label{thm:globalcrossedproduct}
		Let
		\[
		\xymatrix{\GrpH\ar[r]^{\kappa}  & \Sigma \ar[r]^{\rho} & \GrpG} ,
		\]
		be an extension of ample groupoids, with $\gamma\colon \GrpG\to \Sigma$ a continuous global section of $\rho$. Then there exists a groupoid twisted action $\widehat{\Lambda}:=(\widehat{\alpha},\widehat{\lambda},\widehat{f})$ consisting of a homeomorphism $\widehat{\alpha}\colon \GrpG^{(0)}\to \GrpH^{(0)}$; a continuous $\widehat{f}$-twisted action $\widehat{\lambda}\colon \GrpG\to \operatorname{End}(\GrpH)$ where each $\widehat{\lambda}_g$ is a bijection $\GrpH_{\widehat{\alpha}(d_\GrpG(g))}\to \GrpH_{\widehat{\alpha}(r_\GrpG(g))}$; and a continuous $2$-cocycle $\widehat{f}\colon \GrpG^{(2)}\to \GrpH$ for $\widehat{\lambda}$. Moreover, the set
		\[
		\GrpH\ast_{\widehat{\Lambda}}\GrpG:=\{(h,g)\in \GrpH\times \GrpG : d_\GrpH(h)=\widehat{\alpha}(r_\GrpG(g))\}
		\]
		can be endowed with a topological groupoid structure, including the product
		\[
		(h_1,g_1)(h_2,g_2)
		=
		\bigl(h_1\,\widehat{\lambda}_{g_1}(h_2)\,\widehat{f}(g_1,g_2),\,g_1g_2\bigr)\text{ for }(g_1,g_2)\in \GrpG^{(2)},
		\]
		such that $\GrpH\ast_{\widehat{\Lambda}}\GrpG$ is isomorphic to $\Sigma$.
	\end{theorem}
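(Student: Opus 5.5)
The plan is to transport the problem to inverse semigroups, apply Theorem~\ref{thm:mainthmwithorderpreserving}, and transport the answer back.

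First, apply $\Gamma_c(-)$ to the given extension. By the duality recalled in Section~\ref{Subsect:Categoricalduality} (\cite[Theorems 3.3, 3.4]{S2}), we obtain an extension of Boolean inverse semigroups
\[
\xymatrix{N:=\Gamma_c(\GrpH)\ar[r]^{\iota}  & T:=\Gamma_c(\Sigma) \ar[r]^{\pi} & S:=\Gamma_c(\GrpG)},
\]
where $\iota=\Gamma_c(\kappa)$ and $\pi=\Gamma_c(\rho)$. Here $\iota$ and $\pi$ are idempotent-bijective, $\iota$ is injective, $\pi$ is surjective, and $\iota(N)=\pi^{-1}(E(S))$. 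This last equality is the image of the condition $\rho^{-1}(\GrpG^{(0)})=\kappa(\GrpH)$ under the duality.

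Second, use \cite[Lemma 3.7]{S2}: the continuous global section $\gamma$ of $\rho$ corresponds to an order-preserving section $j\colon S\to T$ of $\pi$. Concretely, $j(U)=\gamma(U)$, and this is a compact open bisection because $\gamma$ is a continuous section of an open iso-unital functor. This is exactly the hypothesis of Proposition~\ref{prop:dualtriple}, Proposition~\ref{prop:twistedgroupoid} and Theorem~\ref{thm:mainthmwithorderpreserving}. Those results produce:
\begin{itemize}
\item[(a)] the twisted $S$-module $\Lambda$;
\item[(b)] the groupoid twisted action $\widehat\Lambda$ on $\GrpG(N)$ and $\GrpG(S)$;
\item[(c)] the topological groupoid $\GrpG(N)\ast_{\widehat\Lambda}\GrpG(S)$;
\item[(d)] an isomorphism $\widehat\varphi\colon\GrpG(T)\to\GrpG(N)\ast_{\widehat\Lambda}\GrpG(S)$.
\end{itemize}

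Third, transfer everything back to $\GrpH,\Sigma,\GrpG$ using the natural isomorphisms of topological groupoids $\GrpG(\Gamma_c(\mathcal{K}))\cong\mathcal{K}$, given by $[U,x]\mapsto$ the unique element of $U$ with source $x$. Write $\theta_\GrpH$, $\theta_\Sigma$ and $\theta_\GrpG$ for these. Define
\begin{itemize}
\item $\widehat\alpha$ on $\GrpG^{(0)}$ by conjugating the $\widehat\alpha$ of Proposition~\ref{prop:dualtriple}(i) by $\theta_\GrpG|$ and $\theta_\GrpH|$ on unit spaces;
\item $\widehat\lambda_g:=\theta_\GrpH\circ\widehat\lambda_{\theta_\GrpG^{-1}(g)}\circ\theta_\GrpH^{-1}$;
\item $\widehat f:=\theta_\GrpH\circ\widehat f\circ(\theta_\GrpG^{-1}\times\theta_\GrpG^{-1})$.
\end{itemize}
Because the $\theta$'s are isomorphisms of topological groupoids, the properties in Proposition~\ref{prop:dualtriple}(i)--(v) carry over verbatim: continuity, bijectivity of each $\widehat\lambda_g$ between the correct fibres, the twisted-action identity and the cocycle identity. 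The map $\theta_\GrpH\times\theta_\GrpG$ restricts to a bijection $\GrpG(N)\ast_{\widehat\Lambda}\GrpG(S)\to\GrpH\ast_{\widehat\Lambda}\GrpG$, since it intertwines the defining condition $d(h)=\widehat\alpha(r(g))$. It is a homeomorphism for the relative product topologies, by the last assertion of Proposition~\ref{prop:twistedgroupoid}. We transport the groupoid structure along this bijection; the product formula then becomes the stated one because each $\theta$ is multiplicative. Composing with $\widehat\varphi$ and $\theta_\Sigma$ gives
\[
(\theta_\GrpH\times\theta_\GrpG)\circ\widehat\varphi\circ\theta_\Sigma^{-1}\colon\Sigma\to\GrpH\ast_{\widehat\Lambda}\GrpG,
\]
which is the required isomorphism of topological groupoids.

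The main obstacle is the bookkeeping in the second and third steps. In the second step, one must check that the order-preserving $j$ furnished by \cite[Lemma 3.7]{S2} really is a section in the sense of \eqref{eq:extensionINVSMGP}, including $j|_{E(S)}=(\pi|_{E(T)})^{-1}$. This should follow from $\gamma$ fixing units: $\rho\gamma=\mathrm{id}$ and $\rho$ is iso-unital, so $\gamma|_{\GrpG^{(0)}}=(\rho|_{\Sigma^{(0)}})^{-1}$. In the third step, one must check that the identifications $\theta$ are compatible with the functors, i.e.\ naturality of $\GrpG\circ\Gamma_c\cong\mathrm{id}$ with respect to $\kappa$ and $\rho$. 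This is what guarantees that $\widehat\alpha$ and the fibres $\GrpH_{\widehat\alpha(d(g))}$ match those in the statement. I would cite \cite[Theorem 3.4]{S2} for this naturality rather than reverify it.
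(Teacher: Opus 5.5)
Your route is the paper's. You pass to $N=\Gamma_c(\GrpH)$, $T=\Gamma_c(\Sigma)$, $S=\Gamma_c(\GrpG)$, set $j(U)=\gamma(U)$, apply Propositions~\ref{prop:dualtriple} and~\ref{prop:twistedgroupoid} and Theorem~\ref{thm:mainthmwithorderpreserving}, and transport back along $\GrpG(\Gamma_c(-))\cong\mathrm{id}$, a step the paper leaves implicit and you spell out correctly.

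However, the step you flag as the main obstacle is justified incorrectly. The facts $\rho\circ\gamma=\mathrm{id}$ and iso-unitality of $\rho$ do \emph{not} give $\gamma|_{\GrpG^{(0)}}=(\rho|_{\Sigma^{(0)}})^{-1}$. Iso-unitality only says $\rho|_{\Sigma^{(0)}}$ is a homeomorphism onto $\GrpG^{(0)}$. Since $\rho^{-1}(\GrpG^{(0)})=\kappa(\GrpH)$, the fibre $\rho^{-1}(x)$ over a unit $x$ is the whole isotropy group of $\kappa(\GrpH)$ at $(\rho|_{\Sigma^{(0)}})^{-1}(x)$, and $\gamma(x)$ can be any element of it. For example, take $\GrpG=X$, $\GrpH=\Sigma=X\times\ZZ_2$, $\kappa=\mathrm{id}$ and $\rho$ the projection. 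Then $\gamma(x)=(x,1)$ is a continuous section, and $j(U)=U\times\{1\}$ is not idempotent.

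This matters because Section~\ref{sec:withorderpreserving} needs $j|_{E(S)}=(\pi|_{E(T)})^{-1}$. The map $\alpha=\iota^{-1}\circ j|_{E(S)}$ must take values in $E(N)$ for $\widehat\alpha(\chi)=\chi\circ\alpha^{-1}$ to make sense. Likewise, the expression $\chi\circ j|_{E(S)}$ in the formula for $\widehat\varphi$ requires $j(E(S))\subseteq E(T)$. You also cannot just redefine $j$ on $E(S)$, as one may for an arbitrary section, since that can destroy order preservation.

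The repair is a normalisation. Replace $\gamma$ by $\gamma'(g):=\gamma(g)\,\gamma(d_\GrpG(g))^{-1}$.
\begin{itemize}
\item The product is defined because $d_\Sigma(\gamma(g))$ and $d_\Sigma(\gamma(d_\GrpG(g)))$ are units of $\Sigma$ with the same image $d_\GrpG(g)$ under $\rho$.
\item $\gamma'$ is continuous and satisfies $\rho\circ\gamma'=\mathrm{id}$.
\item $\gamma'(x)=r_\Sigma(\gamma(x))\in\Sigma^{(0)}$ for $x\in\GrpG^{(0)}$.
\end{itemize}
Equivalently, on the semigroup side take $j'(U):=j(U)\,j(U^*U)^*$. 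This is still a section of $\pi$, it is order preserving because products and inverses respect $\le$, and $j'(E)=j(E)j(E)^*\in E(T)$ for $E\in E(S)$. With $j'$ in place of $j$, your argument goes through unchanged. The paper's own proof does not make this normalisation explicit either.
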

	
	\begin{proof}
		Let $N:=\Gamma_c(\GrpH)$, $T:=\Gamma_c(\Sigma)$, and $S:=\Gamma_c(\GrpG)$ be the inverse semigroups of compact open bisections. The functors
		$\kappa:\GrpH\to \Sigma$ and $\rho:\Sigma\to \GrpG$ induce inverse-semigroup
		homomorphisms $\Gamma_c(\kappa):N\to T$ and $\Gamma_c(\rho)\colon T\to S$. Since $\gamma\colon \GrpG\to\Sigma$ is a continuous section of $\rho$, defining $j(U):=\gamma(U)$ for $U\in \Gamma_c(\GrpG)$ gives an order-preserving section $j\colon S\to T$	of $\Gamma_c(\rho)$. Hence
		\[
		\xymatrix{N\ar[r]^{\iota}  & T \ar[r]^{\pi} & S\ar@/^8pt/ [l] ^{j}}
		\]
		is an extension of inverse semigroups with order-preserving section $j$. Our previous results in this section (Proposition~\ref{prop:dualtriple}, Proposition~\ref{prop:twistedgroupoid}, and Theorem~\ref{thm:mainthmwithorderpreserving}) now give the result.
	\end{proof}
	
	We know from \cite[Proposition~3.9]{S2} that when $\GrpG(S)$ is Hausdorff and $\GrpG(S)\setminus \GrpG(S)^{(0)}$ is paracompact (which happens, for example, when both $\GrpG(S)$ and $\GrpG(T)$ are second countable and Hausdorff \cite[Lemma~2.4]{A9}), then there exists a continuous global section $\gamma: \GrpG(S)\rightarrow \GrpG(T)$ with $\gamma(\GrpG(S)^{(0)})\subseteq \GrpG(T)^{(0)}$. So we can use Theorem~\ref{thm:globalcrossedproduct} to obtain the following.
	
	\begin{corollary}\label{Cor:ClassicCase}
		Let $\xymatrix{\mathcal{H} \ar[r]^{\kappa}  & \Sigma \ar[r]^{\rho} & \GrpG}$ be an extension of ample groupoids with $\kappa, \rho$ open iso-unital functors. If $\GrpG$ is Hausdorff and $\GrpG \setminus \GrpG^{(0)}$ is paracompact, then $\Sigma$ is isomorphic to $\GrpH\ast_{\widehat{\Lambda}}\GrpG$ for some groupoid twisted action $\widehat{\Lambda}$.
	\end{corollary}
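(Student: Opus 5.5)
The plan is to reduce Corollary~\ref{Cor:ClassicCase} to Theorem~\ref{thm:globalcrossedproduct}. The only missing input is a continuous global section $\gamma\colon\GrpG\to\Sigma$ of $\rho$, and this is what \cite[Proposition~3.9]{S2} supplies. So the argument has two steps: pass to the inverse-semigroup side to invoke Steinberg's result, then come back.

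\emph{Step 1: set up the semigroup side.} Put $N=\Gamma_c(\GrpH)$, $T=\Gamma_c(\Sigma)$ and $S=\Gamma_c(\GrpG)$. By the duality recalled in Subsection~\ref{Subsect:Categoricalduality}, $\Gamma_c(\kappa)$ and $\Gamma_c(\rho)$ make $N\to T\to S$ an extension of Boolean inverse semigroups. By \cite[Theorem~3.3]{S2} we have identifications $\GrpG\cong\GrpG(S)$ and $\Sigma\cong\GrpG(T)$, and these intertwine $\rho$ with $\GrpG(\Gamma_c(\rho))$. Hence the hypotheses transfer: $\GrpG(S)$ is Hausdorff and $\GrpG(S)\setminus\GrpG(S)^{(0)}$ is paracompact.

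\emph{Step 2: obtain the section and apply Theorem~A.} \cite[Proposition~3.9]{S2} now gives a continuous global section $\GrpG(S)\to\GrpG(T)$ of the induced surjection, which also sends units to units. Transporting it back along the identifications of Step~1 yields a continuous section $\gamma\colon\GrpG\to\Sigma$ of $\rho$. Theorem~\ref{thm:globalcrossedproduct} then gives a groupoid twisted action $\widehat{\Lambda}$ with $\Sigma\cong\GrpH\ast_{\widehat{\Lambda}}\GrpG$. (Equivalently, one can stay on the semigroup side: by \cite[Lemma~3.7]{S2} the section corresponds to an order-preserving section $j\colon S\to T$, and Theorem~\ref{thm:mainthmwithorderpreserving} applies directly.)

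\emph{Main obstacle.} The delicate point is checking that the hypotheses of \cite[Proposition~3.9]{S2} are exactly those stated here. That proposition is phrased for extensions of Boolean inverse semigroups, so I will make sure the dualisation in Step~1 preserves Hausdorffness and paracompactness of $\GrpG\setminus\GrpG^{(0)}$. This is automatic, since the identification is a homeomorphism of groupoids respecting unit spaces. I will also confirm that Steinberg's result needs no further hypotheses on $\Sigma$ or $\GrpH$; if it did, I would note that in our setting these follow from $\rho$ being open and iso-unital. Everything else is a formal consequence of the results already established.
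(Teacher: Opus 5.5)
Your proposal is correct and is essentially the paper's argument: the paper likewise cites \cite[Proposition~3.9]{S2} to get a continuous global section of $\rho$ sending units to units, and then applies Theorem~\ref{thm:globalcrossedproduct}. Your passage through $\Gamma_c$ and back is only bookkeeping, and it matches the way the paper states Steinberg's result in terms of $\GrpG(S)$ and $\GrpG(T)$.
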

	
	We close this section with two examples that give a taste of the potential applications.
	
	\begin{example}\label{Exam:OpenIsotropy}
		Let $\GrpG$ be a second countable, ample, Hausdorff groupoid. Then $\text{Iso}(\GrpG)^\circ$ is an open, normal subgroupoid containing $\GrpG^{(0)}$. Hence if $\text{Iso}(\GrpG)^\circ$ is also closed, \cite[Proposition~2.5(d)]{SW16} says that $\GrpG / \text{Iso}(\GrpG)^\circ$ is a second countable, ample, Hausdorff groupoid, and $(\text{Iso}(\GrpG)^\circ)^{(0)}\cong \GrpG^{(0)}\cong (\GrpG / \text{Iso}(\GrpG)^\circ)^{(0)}$. Thus
		\[
		\xymatrix{
			\text{Iso}(\GrpG)^\circ \ar@{^{(}->}[r] & \GrpG \ar@{->>}[r] & \GrpG / \text{Iso}(\GrpG)^\circ
		}
		\]
		is an extension of ample groupoids, and so by Corollary \ref{Cor:ClassicCase} we have
		\[
		\GrpG\cong \text{Iso}(\GrpG)^\circ\ast_{\widehat{\Lambda}}(\GrpG / \text{Iso}(\GrpG)^\circ)
		\]
		for a suitable groupoid twisted action $\widehat{\Lambda}$.
		
		If we write the dual extension of Boolean inverse semigroups
		$$\xymatrix{N\ar[r]^{\iota}  & T \ar[r]^{\pi} & S},$$
		then $N=Z(E(T))$ by \cite[Proposition 2.9]{SZ}, and $S\cong T/Z(E(T))$ is a fundamental inverse semigroup, so that $\GrpG / \text{Iso}(\GrpG)^\circ\cong \GrpG(S)$ is effective \cite[Proposition 2.10]{SZ}. This example means that any second countable, ample, Hausdorff groupoid in which
		the interior of the isotropy is closed can be written as a twisted crossed product
		of a group bundle by an effective ample groupoid, extending to the context of \emph{Hausdorff} groupoids the classical result of inverse semigroups \cite[Chapter 5]{Law}.
	\end{example}
	
	\begin{example}\label{Exam:ClosureUnitSpace}
		Let $\GrpG$ be a second countable, ample groupoid such that $\overline{\GrpG^{(0)}}$ is open in $\GrpG$ (for example, this is automatic if $\GrpG$ is Hausdorff, or if $\GrpG^{(0)}$ is \emph{extremally disconnected} in the sense that the closure of every open subset of $\GrpG^{(0)}$ is open). Since the range and source maps are continuous, $\overline{\GrpG^{(0)}}\subseteq \text{Iso}(\GrpG)^\circ$. Following the notation in \cite[Section 5]{KKLRU}, $\GrpG_{\text{Haus}}:=\GrpG / \overline{\GrpG^{(0)}}$ is a second countable, ample, Hausdorff groupoid, and $\overline{\GrpG^{(0)}}^{(0)}\cong \GrpG^{(0)}\cong \GrpG_{\text{Haus}}^{(0)}$. Thus,
		\[
		\xymatrix{
			\overline{\GrpG^{(0)}} \ar@{^{(}->}[r] & \GrpG \ar@{->>}[r] & \GrpG_{\text{Haus}}
		}
		\]
		is an extension of ample groupoids, and so by Corollary \ref{Cor:ClassicCase} we have $\GrpG\cong \overline{\GrpG^{(0)}}\ast_{\widehat{\Lambda}}\GrpG_{\text{Haus}}$ for a suitable groupoid twisted action $\widehat{\Lambda}$. That is, in some sense, the failure of Hausdorffness is isolated in the non-Hausdorff group bundle $\overline{\GrpG^{(0)}}$.
	\end{example}

	\section{Examples with no continuous global section}\label{Sect:Examples}
	
	In Section \ref{sec:withorderpreserving} we proved that whenever we have an extension of inverse semigroups as in \eqref{eq:extensionINVSMGP} with an order-preserving section $j:S\rightarrow T$, the groupoid $\GrpG(T)$ is ``globally trivial'', in the sense that it can be written as a twisted crossed product $\GrpG(N)\ast_{\widehat{\Lambda}}\GrpG(S)$. But without an order-preserving section $j:S\rightarrow T$, as we discussed in Remark~\ref{rem:withoutorderpreserving}, there is no groupoid twisted action $\widehat{\Lambda}$, and hence no twisted crossed product groupoid. In this section, we describe two examples of ample groupoid extensions that do not have continuous global sections. The extensions we consider have the form
	\[
	\xymatrix{\GrpH\ar[r]^{\kappa}  & \Sigma \ar[r]^{\rho} & \GrpG },
	\]
	with $\kappa, \rho$ open continuous homomorphisms such that $\kappa_{\vert {\GrpH^{(0)}}}$ and $\rho_{\vert {\Sigma^{(0)}}}$ are iso-unital, with $\kappa$ injective, $\rho$ surjective, and $\kappa(\GrpH)=\rho^{-1}(\GrpG^{(0)})$. In our examples, $\GrpH$ is a group bundle.
	
	\begin{example}\label{Exa:Example1}
		Fix a totally disconnected, locally compact, Hausdorff space $X$.  Fix $p\in X$ and write the $2$-element group
		$\mathbb{Z}_2=\{0,1\}.$
		Define
		\[
		\GrpH= (X\times \ZZ_2)\setminus \{(p, 1)\},\quad \Sigma= X\times \ZZ_2,\quad\text{and}\quad \GrpG=X\sqcup \{(p, 1)\}
		\]
		where $\GrpH^{(0)}=\Sigma^{(0)}=X\times \{0\}$ and $\GrpG^{(0)}=X$. Both $\GrpH$ and $\Sigma$ are group bundles with the relative product topologies and hence are ample Hausdorff groupoids.  Notice that $\GrpH$ is not of the form $\GrpH^{(0)}\times T$ for any abelian group $T$.
		The groupoid $\GrpG$ is the ``two-headed snake'', which is the trivial groupoid $X$ along with a single point of nontrivial isotropy at $(p,1)$ where $s((p, 1))=r((p,1)) = p$. The topology on $\GrpG$ is the topology on $X$ and for each open $U \subseteq X$ such that $p \in U$ we add an additional open set
		\[
		(U\setminus\{p\}) \cup \{(p, 1)\}.
		\]
		Thus $\GrpG$ is an ample groupoid that is not Hausdorff.
		Let $\kappa:\GrpH \to \Sigma$ be inclusion and $\rho:\Sigma \to \GrpG$ be such that
		\[
		\rho(x, i) =
		\begin{cases}
			(p, 1) & \text{if } (x,i)=(p, 1)\\
			x&\text{otherwise.}
		\end{cases}
		\]
		Then it is straightforward to check that
		\[
	\xymatrix{\GrpH\ar[r]^{\kappa}  & \Sigma \ar[r]^{\rho} & \GrpG }
		\]
		is a groupoid extension. We claim that no continuous global section exists. To see this, notice that any sequence $\{x_n\}\subseteq \GrpG$ converging to $p\in \GrpG$ also converges to $(p, 1)$. Since $\Sigma$ is Hausdorff, limits of sequences are unique and hence any continuous function $\psi: \GrpG\rightarrow \Sigma$ must send both $p$ and $(p,1)$ to the same point of $\Sigma$.  But a section of $\rho$ must send $p$ to $(p,0)$ and $(p,1)$ to $(p, 1)$, thus no continuous section exists.
		
		Let us check what happens with the section of the dual inverse semigroup extension
		\[
		\xymatrix{\Gamma_c(\GrpH)\ar[r]^{\Gamma_c(\kappa)}  & \Gamma_c(\Sigma) \ar[r]^{\Gamma_c(\rho)} & \Gamma_c(\GrpG)}.
		\]
		We first determine the partial order in $\Gamma_c(\Sigma)$ and  $\Gamma_c(\GrpG)$. Since all elements $\gamma$ in both group bundles $\GrpG$ and $\Sigma$ are such that $\gamma=\gamma^{-1}$, it is straightforward to check that for compact open bisections $U$ and $V$ that are both in either $\Gamma_c(\GrpG)$ or  $\Gamma_c(\Sigma)$, we have $U\leq V$ if and only if $U\subseteq V$.
		
		We claim that there is no order-preserving global section for $\Gamma_c(\rho)$. To see this, suppose for contradiction that $j:\Gamma_c(\GrpG)\rightarrow \Gamma_c(\Sigma)$ is an order-preserving global section for $\Gamma_c(\rho)$. Fix $V \in \Gamma_c(\GrpG)$ such that $(p,1) \in V$. Let $B = j(V) \cap (X \times \{1\})$. Since $\Sigma$ is Hausdorff and $j$ is a section,  $B$ is a compact open bisection containing $(p,1)\in \Sigma$.  We then have $\Gamma_c(\rho)(B) \subseteq V$.  Let $U$ be a compact open bisection contained in $\Gamma_c(\rho)(B) \cap X$.
		Then $U \leq \Gamma_c(\rho)(B)$.  But $U$ is an idempotent, so $j(U)$ must also be an idempotent.  However, since $j$ is order preserving, we must have $j(U) \subseteq B$ and hence $j(U)$ cannot be an idempotent, which is a contradiction. So the claim holds and there are no order-preserving global sections of $\Gamma_c(\rho)$, as predicted by \cite[Proposition 3.8]{S2} in view of Example \ref{Exa:Example1}.
	\end{example}
	
	\begin{remark}
		Even though there is no continuous global section for the extension in Example~\ref{Exa:Example1}, we can construct two ``local'' sections of $\rho$ by taking:
		\begin{enumerate}
			\item the identity map between compact open subsets of the unit spaces, and
			
			\item the map that takes any compact open subset of $\GrpG\setminus \{p\}$ to a compact open subset of $X \times \{1\}$
			in the obvious way.
		\end{enumerate}
		Both local sections are well-defined, and their respective maps on the level of inverse semigroups are order-preserving sections of $\Gamma_c(\rho)$ on their respective domains. This idea of moving locally will motivate our work in Section~\ref{sec:nonorderpreserving} where we don't assume the existence of a continuous global section.
	\end{remark}
	
	We now show that the same pathology can happen for extensions such that $\Sigma$ is a non-Hausdorff groupoid.
	
	\begin{example}\label{Exa:Example2}
		Fix a second countable, totally disconnected, locally compact, Hausdorff space $X$,
		and fix $p\in X$. We construct an extension
		\[
		\xymatrix{\GrpH\ar[r]^{\kappa}  & \Sigma \ar[r]^{\rho} & \GrpG },
		\]
		in which $\Sigma$ is non-Hausdorff and no continuous global section of $\rho$
		exists.
		
		As a set, let
		\[
		\Sigma
		:=
		(X\times\{0\})\sqcup (X\times\{1\})
		\sqcup \{(p,2),(p,3)\}.
		\]
		The unit space is $\Sigma^{(0)}=X\times\{0\}$. We make $\Sigma$ into a group bundle as follows. For $x\neq p$, the fibre
		$x\Sigma x$ is the two-element group $\{(x,0),(x,1)\}\cong \mathbb Z_2$; at $p$, the fibre $p\Sigma p$ is the four-element
		group $\{(p,0),(p,1),(p,2),(p,3)\}\cong \mathbb Z_4$. To define a topology on $\Sigma$, for each compact open subset $U\in\operatorname{CO}(X)$,
		put
		\[
		\begin{aligned}
			U_0 &:= U\times\{0\},\\
			U_1 &:= U\times\{1\},\\
			U_2 &:= ((U\setminus\{p\})\times\{0\})\cup\{(p,2)\},\\
			U_3 &:= ((U\setminus\{p\})\times\{1\})\cup\{(p,3)\}.
		\end{aligned}
		\]
		We give $\Sigma$ the topology with basis $\mathcal B_\Sigma:= \{U_i: U\in \operatorname{CO}(X),\ 0\leq i\leq 3\}$. With this topology, $\Sigma$ is an ample group bundle. It is non-Hausdorff because $(p,0)$ and $(p,2)$ cannot be separated by disjoint open
		neighbourhoods (and similarly, $(p,1)$ and $(p,3)$ cannot be separated).
		
		Next define $\mathcal{H}$ to be the subgroup bundle of $\Sigma$ given by
		\[
		\mathcal{H}
		:=
		(X\times\{0\})\sqcup ((X\setminus\{p\})\times\{1\}).
		\]
		Then for $x\not= p$ we have $x\mathcal{H}x\cong \mathbb Z_2$, and $p\mathcal{H}p\cong \{0\}$. We give $\mathcal{H}$ the subspace topology inherited from $\Sigma$. Equivalently, $\mathcal{H}$
		has basis
		\[
		\{U_0 : U\in\operatorname{CO}(X)\}
		\cup
		\{U_1 : U\in\operatorname{CO}(X),\ p\notin U\}.
		\]
		Hence $\mathcal{H}$ is a Hausdorff ample abelian group bundle. Let $\kappa:\mathcal{H}\hookrightarrow \Sigma$ be the inclusion map. Then $\kappa$ is an injective, continuous, open groupoid
		homomorphism, and $\kappa|_{\mathcal{H}^{(0)}}:\mathcal{H}^{(0)}\to \Sigma^{(0)}$ is a homeomorphism.
		
		As a set, let
		\[
		\mathcal{G}
		:=
		X\sqcup\{(p,1),(p,2),(p,3)\},
		\]
		where we identify the unit space $\mathcal{G}^{(0)}$ with $X$. We make $\mathcal{G}$ into a group
		bundle by declaring the fibre over $x\neq p$ to be trivial, and the fibre over
		$p$ to be
		\[
		p\mathcal{G}p
		=
		\{p,(p,1),(p,2),(p,3)\}
		\cong \mathbb Z_4.
		\]
		For each compact open subset $U\in\operatorname{CO}(X)$ define
		\[
		\begin{aligned}
			\widehat U_0 &:= U,\\
			\widehat U_1 &:= (U\setminus\{p\})\cup\{(p,1)\},\\
			\widehat U_2 &:= (U\setminus\{p\})\cup\{(p,2)\},\\
			\widehat U_3 &:= (U\setminus\{p\})\cup\{(p,3)\}.
		\end{aligned}
		\]
		We give $\mathcal{G}$ the topology with basis $\mathcal B_\mathcal{G}:=\{\widehat U_i: U\in\operatorname{CO}(X),\ 0\leq i\leq 3\}$. Then $\mathcal{G}$ is an ample group bundle. It is non-Hausdorff, since $p$ and
		$(p,i)$ cannot be separated by disjoint open neighbourhoods for $i=1,2,3$.
		
		Define $\rho:\Sigma\to \mathcal{G}$ by
		\[
		\rho(x,i)
		=
		\begin{cases}
			(p,i) & \text{if }x=p,\\
			x      & \text{if }x\neq p.
		\end{cases}
		\]
		Then $\rho$ is an open, continuous, surjective groupoid homomorphism such that $\kappa(\GrpH)=\rho^{-1}(\GrpG^{(0)})$, and
		\[
		\xymatrix{\GrpH\ar[r]^{\kappa}  & \Sigma \ar[r]^{\rho} & \GrpG }
		\]
		is a groupoid extension. A similar argument to the one presented in Example~\ref{Exa:Example1} shows that there is no continuous global section for $\rho$.
		
	\end{example}

	\section{Groupoid extensions with no continuous global section}\label{sec:nonorderpreserving}
	
	Throughout this section we assume that \[
	\xymatrix{N\ar[r]^{\iota}  & T \ar[r]^{\pi} & S\ar@/^8pt/ [l] ^{j}}
	\]
	is an inverse semigroup extension with section $j:S\rightarrow T$, but we no longer assume $j$ is order-preserving. Without this assumption we no longer have a well-defined global section $\widehat{j}\colon \mathcal{G}(S)\to\mathcal{G}(T)$ at the level of groupoids, and we no longer have a groupoid twisted action $\widehat{\Lambda}=(\widehat{\alpha},\widehat{\lambda},\widehat{f})$, meaning we have no description of $\mathcal{G}(T)$ as a twisted crossed product groupoid.
	
	In place of this description we instead construct a space $\mathcal{X}=\bigcup_{t\in T}\mathcal{X}_t$, a family of locally-defined maps from $\mathcal{G}(T)$ to $\mathcal{X}$, and a locally-defined product on $\mathcal{X}$ that is compatible with these local maps. Even though we no longer have a twisted crossed product groupoid $\GrpG(N)\ast_{\widehat{\Lambda}}\GrpG(S)$, we do still have the set of elements of $\GrpG(N)\ast_{\widehat{\Lambda}}\GrpG(S)$ (as $\widehat{\Lambda}$, when it exists, only gets involved in the algebraic properties of the groupoid), and our space $\mathcal{X}$ will be in bijection with this set of elements. Each local space $\mathcal{X}_t$ will be in bijection with the set of elements in the compact open bisections $\mathcal{D}_{a_t,\pi(t)}$ from Proposition~\ref{prop:twistedgroupoid}.
	
	To construct our locally-defined maps from $\GrpG(T)$ to $\mathcal{X}$, we start by defining a family of local maps from $T$ to $N\ast_\Lambda S$, and a local product on $N\ast_\Lambda S$, which will induce their topological analogues.
	
	\subsection{Algebraic local charts and the algebraic local product}\label{subsec:alglocalcharts}
	
	Given an inverse semigroup $S$ and an element $s\in S$ we denote by $s\!\downarrow$ the set
	\[
	{s\!\downarrow} :=\{ r\in S : r\leq s\}
	\]
	Note that we can also describe $s\!\downarrow$ as the set $\{se : e\in E(S), e\leq s^*s\}$.
	
	\begin{notation}
		For each $s\in S$ we denote
		\[
		R_\alpha(s) :=  \{ (\alpha(pp^*),p) : p \in s\!\downarrow\} \subseteq N\ast_\Lambda S.
		\]
	\end{notation}
	
	Since $\alpha\circ \pi|_{E(T)}=\iota^{-1}$, for each $t\in T$ we have
	\[
	R_\alpha(\pi(t)) = \{(\iota^{-1}(rr^*),\pi(r)) : r\in t\!\downarrow\}.
	\]
	
	\begin{proposition}\label{prop:LocalSections}
		For each $t\in T$ there is an order-preserving (set-theoretic) bijection $\varphi_t\colon t\!\downarrow\to R_\alpha(\pi(t))$ given by
		\[
		\varphi_t(r)=(\iota^{-1}(rr^*),\pi(r)),
		\]
		and with inverse given by $\varphi_t^{-1}(\alpha(pp^*),p)= tj(p^*p)$.
	\end{proposition}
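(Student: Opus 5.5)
The plan is to verify directly that the two displayed formulas define mutually inverse maps between $t\!\downarrow$ and $R_\alpha(\pi(t))$, and that $\varphi_t$ respects the natural order.

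First, $\varphi_t$ lands in $R_\alpha(\pi(t))$. For $r\le t$ we have $\pi(r)\le\pi(t)$, so $\pi(r)\in\pi(t)\!\downarrow$. Since $\alpha\circ\pi|_{E(T)}=\iota^{-1}$, we get $\iota^{-1}(rr^*)=\alpha(\pi(rr^*))=\alpha(\pi(r)\pi(r)^*)$. Hence $\varphi_t(r)=(\alpha(pp^*),p)$ with $p=\pi(r)$, which lies in $R_\alpha(\pi(t))$ and in $N\ast_\Lambda S$.

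Next, the proposed inverse is well defined and lands in $t\!\downarrow$. Given $p\le\pi(t)$, the element $j(p^*p)$ is an idempotent of $T$, because $j|_{E(S)}=(\pi|_{E(T)})^{-1}$; thus $tj(p^*p)\le t$. Applying $\varphi_t$, we have $\pi(tj(p^*p))=\pi(t)p^*p=p$, using $p\le\pi(t)$. Moreover
\[
\iota^{-1}\bigl(tj(p^*p)t^*\bigr)=\alpha\bigl(\pi(t)p^*p\pi(t)^*\bigr)=\alpha(pp^*),
\]
so $\varphi_t(tj(p^*p))=(\alpha(pp^*),p)$.

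Conversely, let $r\le t$. Write $r=te$ with $e=r^*r\in E(T)$. Since $\pi$ is idempotent-bijective and $j|_{E(S)}$ is its inverse on idempotents, $j(\pi(r)^*\pi(r))=j(\pi(r^*r))=r^*r$. Therefore $tj(\pi(r)^*\pi(r))=tr^*r=r$. The key point is that $r$ is determined by $t$ and its source idempotent, and $\pi$ is injective on idempotents.

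Finally, order preservation. If $r_1\le r_2\le t$, then $\pi(r_1)\le\pi(r_2)$ and $r_1r_1^*\le r_2r_2^*$. Applying \eqref{eq:orderondecomp}, we need $\alpha(p_1p_1^*)\le\alpha(p_1p_1^*)\alpha(p_2p_2^*)$, which holds because $\alpha(p_1p_1^*)\le\alpha(p_2p_2^*)$ is an idempotent inequality. The inverse is also order preserving: if $p_1\le p_2$, then $tj(p_1^*p_1)\le tj(p_2^*p_2)$ since $j$ is a homomorphism on idempotents.

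The main obstacle, mild as it is, is showing $\varphi_t^{-1}\circ\varphi_t=\mathrm{id}$, which requires recovering $r$ from $\pi(r)$ using only the idempotent-bijectivity of $\pi$. The rest is routine bookkeeping with the natural partial order.
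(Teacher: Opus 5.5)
Your proof is correct and follows the paper's argument essentially step for step. You check that $\varphi_t$ lands in $R_\alpha(\pi(t))$, you verify that both compositions with $(\alpha(pp^*),p)\mapsto tj(p^*p)$ are identities (the key step $tj(\pi(r^*r))=tr^*r=r$ is exactly the paper's), and you get order preservation from \eqref{eq:orderondecomp}. The only cosmetic differences are that you obtain $tj(p^*p)\le t$ directly from $j(p^*p)$ being idempotent rather than via $j(p^*p)\le t^*t$, and that you additionally note the inverse is order preserving.
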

	
	\begin{proof}
		Write $s:=\pi(t)$. For $r\in t\!\downarrow$ we have $r=tr^*r\implies \pi(r)=s\pi(r^*r)$, and $r^*r\le t^*t\implies \pi(r^*r)\le \pi(t^*t)=s^*s$. This shows that $\pi(r)\in s\!\downarrow$, and hence $(\alpha(\pi(rr^*)),\pi(r))\in R_\alpha(s)$. We define $\varphi_t\colon t\!\downarrow\to R_\alpha(s)$ by $\varphi_t(r)=(\alpha(\pi(rr^*)),\pi(r))$.
		
		We now show that $\varphi_t$ has an inverse. First note that for $p\in s\!\downarrow$ we have
		\[
		p^*p\le s^*s\implies j(p^*p)=\pi^{-1}(p^*p)\le \pi^{-1}(s^*s)=\pi^{-1}(\pi(t^*t))=t^*t.
		\]
		Hence $tj(p^*p)\in t\!\downarrow$, and we define $\rho: R_\alpha(s)\to t\!\downarrow$ by $\rho(\alpha(pp^*),p)= tj(p^*p)$. For each $(\alpha(pp^*),p)\in R_\alpha(s)$ we have
		\begin{align*}
			\varphi_t(\rho(\alpha(pp^*),p)) &= \varphi_t(tj(p^*p)) \\
			&= (\alpha(\pi(tj(p^*p)(tj(p^*p))^*)),\pi(tj(p^*p)))\\
			&= ( \alpha(sp^*p(sp^*p)^*),sp^*p)\\
			&= (\alpha(pp^*),p),
		\end{align*}
		and for each $r\in t\!\downarrow$ we have
		\[
		\rho(\varphi_t(r)) = \rho( \alpha(\pi(rr^*)),\pi(r))= tj(\pi(r)^*\pi(r))= tr^*r=r.
		\]
		Hence $\varphi_t$ is bijective with inverse $\rho$.
		
		To see that $\varphi_t$ is order preserving, let $r_1, r_2\in T$ with $r_1\le r_2$. Then $\pi(r_1)\le \pi(r_2)$, and we have
		\[
		\alpha(\pi(r_1r_1^*))=\alpha(\pi(r_1r_1^*r_2r_2^*))=\alpha(\pi(r_1)\pi(r_1)^*)\alpha(\pi(r_2r_2^*)).
		\]
		We see from \eqref{eq:orderondecomp} that $\varphi_t(r_1)=(\alpha(\pi(r_1r_1^*)),\pi(r_1))\le (\alpha(\pi(r_2r_2^*)),\pi(r_2))=\varphi_t(r_2)$.
	\end{proof}
	
	\begin{definition}\label{def:alglocalcharts}
		We call each map $\varphi_t\colon t\!\downarrow\to R_\alpha(\pi(t))$ from Proposition~\ref{prop:LocalSections} an {\em algebraic local chart}, and the set $\{\varphi_t : t\in T\}$ the {\em family of algebraic local charts}.
	\end{definition}
	
	\begin{definition}\label{def:alglocalproduct}
		Let $t_1,t_2\in T$ and $r_1\in t_1\!\downarrow$, $r_2\in t_2\!\downarrow$. We define the {\em algebraic local product} of $(\iota^{-1}(r_1r_1^*),\pi(r_1))\in R_\alpha(\pi(t_1))$ and $(\iota^{-1}(r_2r_2^*),\pi(r_2))\in R_\alpha(\pi(t_2))$ to be
		\[
		(\iota^{-1}(r_1r_1^*),\pi(r_1)) \cdot^{\operatorname{alg}}_{t_1,t_2} (\iota^{-1}(r_2r_2^*),\pi(r_2)) := (\iota^{-1}(r_1r_2(r_1r_2)^*),\pi(r_1r_2))\in R_{\alpha}(\pi(t_1t_2)).
		\]	
		Note that this means for $t_1,t_2\in T$ and $r_1\in t_1\!\downarrow$, $r_2\in t_2\!\downarrow$ we have
		\[
		\varphi_{t_1}(r_1) \cdot^{\operatorname{alg}}_{t_1,t_2}\varphi_{t_2}(r_2)=\varphi_{t_1t_2}(r_1r_2).
		\]
	\end{definition}
	
	\begin{remark}\label{Rem:GoodDefAlgProd}
		Notice that $\varphi_{t_1}(r_1) \cdot^{\operatorname{alg}}_{t_1,t_2}\varphi_{t_2}(r_2)\ne \varphi_{t_1}(r_1) \cdot\varphi_{t_2}(r_2)\in N\ast_\Lambda S$, but it is the right definition, as we will see in Remark \ref{RemConnection}.
	\end{remark}
	
	\subsection{Topological local charts and the topological local product}\label{subsec:toplocalcharts}
	
	Our main result in this section is Theorem~\ref{thm:mainthmnoorderpreserving}, which is a local analogue of Theorem~\ref{thm:mainthmwithorderpreserving} in the setting of general extensions of inverse semigroups (without the assumption of an order-preserving section). We start by defining the space $\mathcal{X}$.
	
	\begin{definition}\label{def:XtandX}
		For each $t\in T$ we define
		\[
		\mathcal{X}_t:=\left\{
		\left([a_t,\widehat{\alpha}(\beta_{\pi(t)}(\chi))],[\pi(t),\chi]\right)
		: \chi\in D_S(\pi(t^*t))
		\right\}\subseteq \GrpG(N)\times\GrpG(S),
		\]
		and we define
		\[
		\mathcal{X}=\bigcup_{t\in T}\mathcal{X}_t.
		\]
		Note that, as a set, $\mathcal{X}_t$ is the compact open bisection $\mathcal{D}_{a_t,\pi(t)}$ as given in Proposition~\ref{prop:twistedgroupoid}. Since the map $\chi\mapsto	\left([a_t,\widehat{\alpha}(\beta_{\pi(t)}(\chi))],[\pi(t),\chi]\right)$ is a bijection, we give $\mathcal{X}_t$ the unique topology that makes this map a homeomorphism. We then give $\mathcal X$ the final topology with respect to the inclusions $\mathcal{X}_t\hookrightarrow \mathcal{X}$.
	\end{definition}
	
	In the next lemma we use our algebraic local charts to induce local maps $\GrpG(T)\to \GrpG(N\ast_\Lambda S)$. We will then prove the existence of local maps from $\GrpG(N\ast_\Lambda S)$ to $\mathcal{X}$, and we will compose these maps to get our topological local charts $\Phi_t$ in Definition~\ref{def:toplocalcharts}.
	
	\begin{lemma}
		For each $t\in T$ the map $\widehat{\varphi_t}\colon D_T(t)\to D_{N\ast_\Lambda S}(\varphi_t(t))$ given by
		\[
		\widehat{\varphi_t}([t,\chi]) = [\varphi_t(t),\chi\circ(\varphi_t|_{E(T)\cap t\!\downarrow})^{-1}]
		\]
		is a homeomorphism.
	\end{lemma}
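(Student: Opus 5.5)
The plan is to reduce the statement to the fact that $\varphi_t$ restricts to a semilattice isomorphism on idempotents. Then $\widehat{\varphi_t}$ becomes a conjugate of the homeomorphism of spectra that this isomorphism induces, transported along the domain maps of the two compact open bisections, exactly as in the proof of Proposition~\ref{prop:dualtriple}(i).

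\textbf{Step 1: the idempotent part of $\varphi_t$.} For an idempotent $e\le t^*t$ in $T$ we have $\varphi_{t^*t}(e)=(\iota^{-1}(e),\pi(e))$. This agrees with $\varphi(e)=(a_e,\pi(e))$ from Remark~\ref{rem:thevarphimap}, because $j(\pi(e))=(\pi|_{E(T)})^{-1}(\pi(e))=e$ and hence $a_e=\iota^{-1}(e)$.

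\begin{itemize}
\item Since $\varphi$ is an isomorphism of inverse semigroups, $\varphi|_{E(T)}\colon E(T)\to E(N\ast_\Lambda S)$ is a semilattice isomorphism, with $\varphi|_{E(T)}(e)=(\iota^{-1}(e),\pi(e))$.
\item By Proposition~\ref{prop:LocalSections} it restricts to an order-isomorphism from the principal ideal $(t^*t)\!\downarrow$ onto the principal ideal below $(\alpha(\pi(t^*t)),\pi(t^*t))$.
\item By \eqref{eq:rangeandsourceofas}, $(\alpha(\pi(t^*t)),\pi(t^*t))=\varphi_t(t)^*\varphi_t(t)$.
\end{itemize}

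I read the restriction ``$\varphi_t|_{E(T)\cap t\downarrow}$'' in the statement as this map on idempotents below the source of $t$. The expression $\chi\circ(\varphi_t|)^{-1}$ is then interpreted as $\chi\circ(\varphi|_{E(T)})^{-1}$. Equivalently, a character in $D(e)$ is determined by its (nonzero) restriction to $e\!\downarrow$, via the extension $\chi(x)=\chi(xe)$, so the two readings give the same character.

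\textbf{Step 2: a homeomorphism of unit spaces.} Define $\theta\colon \widehat{E}(T)\to\widehat{E}(N\ast_\Lambda S)$ by $\theta(\chi)=\chi\circ(\varphi|_{E(T)})^{-1}$. As in Proposition~\ref{prop:dualtriple}(i), $\theta$ is a homeomorphism, because
\[
\theta(D_T(e))=D_{N\ast_\Lambda S}(\varphi(e)).
\]
In particular $\theta$ maps $D_T(t^*t)$ homeomorphically onto $D_{N\ast_\Lambda S}(\varphi_t(t)^*\varphi_t(t))$.

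\textbf{Step 3: transfer to the bisections.} Elements of $D_T(t)$ are exactly the germs $[t,\chi]$ with $\chi\in D_T(t^*t)$, and distinct $\chi$ give distinct germs. The domain map $d_T$ therefore restricts to a homeomorphism $D_T(t)\to D_T(t^*t)$ with inverse $\chi\mapsto[t,\chi]$. The same holds for $D_{N\ast_\Lambda S}(\varphi_t(t))$. Consequently
\[
\widehat{\varphi_t}=(d|_{D(\varphi_t(t))})^{-1}\circ\theta\circ d_T|_{D_T(t)},
\]
which is well defined, bijective and a homeomorphism as a composite of homeomorphisms.

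\textbf{Main obstacle.} Almost all of the work is bookkeeping; the delicate point is Step 1. One must make the restricted inverse precise and check that $\theta$ really lands in the right basic set. The key identity is
\[
\varphi_t(t)^*\varphi_t(t)=\varphi(t^*t),
\]
and I would verify it first, using $\iota^{-1}(tt^*)=\alpha(\pi(tt^*))$ together with \eqref{eq:rangeandsourceofas}.
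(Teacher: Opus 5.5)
Your proposal is correct and follows essentially the same route as the paper. Both arguments rest on the fact that, on idempotents, $\varphi_t$ is a semilattice isomorphism agreeing with $\varphi|_{E(T)}$, and that it induces a homeomorphism of spectra carrying $D_T(t^*t)$ onto $D_{N\ast_\Lambda S}(\varphi_t(t)^*\varphi_t(t))$. The paper checks well-definedness, injectivity and surjectivity by hand before appealing to that induced homeomorphism, whereas your factorisation $\widehat{\varphi_t}=(d|_{D(\varphi_t(t))})^{-1}\circ\theta\circ d_T|_{D_T(t)}$ gives all of these at once. Your reading of $E(T)\cap t\!\downarrow$ as the idempotents below $t^*t$, with $\varphi_t(t^*t)$ understood as $\varphi(t^*t)$, is the right way to make the paper's notation precise.
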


	\begin{proof}
		We first show that $\widehat{\varphi}_t$ is well defined. First, since $\varphi_t$ is order-preserving on $t\!\downarrow$ by Proposition \ref{prop:LocalSections}, the definition of $\widehat{\varphi_t}$ does not depend on the choice of the representative of $\varphi_t(t)$. Now, let $[t,\chi]\in D_T(t)$, so
		$\chi(t^*t)=1$. Put
		\[
		\eta:=\chi\circ(\varphi_t|_{E(T)\cap t\!\downarrow})^{-1}.
		\]
		Then $\eta$ is a character on $E(N\ast_\Lambda S)\cap \varphi_t(t)\!\downarrow$. Since $\varphi_t(t)^*\varphi_t(t)=(\iota^{-1}(t^*t),\pi(t^*t))=\varphi_t(t^*t)$, we have
		\[
		\eta(\varphi_t(t)^*\varphi_t(t))
		=
		\eta(\varphi_t(t^*t))
		=
		\chi(t^*t)
		=
		1.
		\]
		Thus
		$[\varphi_t(t),\eta]\in D_{N\ast_\Lambda S}(\varphi_t(t))$, and so
		$\widehat{\varphi}_t([t,\chi])$ is well defined.
		
		To see that $\widehat{\varphi}_t$ is injective, suppose that $\widehat{\varphi}_t([t,\chi_1])=\widehat{\varphi}_t([t,\chi_2])$. Then
		\[
		[\varphi_t(t),\chi_1\circ(\varphi_t|_{E(T)\cap t\!\downarrow})^{-1}]
		=
		[\varphi_t(t),\chi_2\circ(\varphi_t|_{E(T)\cap t\!\downarrow})^{-1}].
		\]
		Hence
		\[
		\chi_1\circ(\varphi_t|_{E(T)\cap t\!\downarrow})^{-1}
		=
		\chi_2\circ(\varphi_t|_{E(T)\cap t\!\downarrow})^{-1},
		\]
		and so $\chi_1=\chi_2$.
		
		For surjectivity, let $[\varphi_t(t),\eta]\in D_{N\ast_\Lambda S}(\varphi_t(t))$. Define
		\[
		\chi:=\eta\circ \varphi_t|_{E(T)\cap t\!\downarrow}.
		\]
		Then $\chi$ is a character on $E(T)\cap t\!\downarrow$, and
		\[
		\chi(t^*t)
		=
		\eta(\varphi_t(t^*t))
		=
		\eta(\varphi_t(t)^*\varphi_t(t))
		=
		1.
		\]
		Thus $[t,\chi]\in D_T(t)$, and by construction $\widehat{\varphi}_t([t,\chi])
		=
		[\varphi_t(t),\eta]$. So $\widehat{\varphi}_t$ is surjective, and hence is bijective.
		
		Finally, $\widehat{\varphi}_t$ is a homeomorphism because it is induced by the semilattice
		isomorphism
		\[
		\varphi_t|_{E(T)\cap t\!\downarrow}:E(T)\cap t\!\downarrow \to E(N\ast_\Lambda S)\cap \varphi_t(t)\!\downarrow.
		\]
	\end{proof}
	
	Recall that for $\chi\in \widehat{E}(N\ast_\Lambda S)$ we denote by $\widetilde{\chi}$ the character in $\widehat{E}(S)$ given by $\widetilde{\chi}(e)=\chi(\alpha(e),e)$.
	
	\begin{lemma}\label{lem: the Psi maps}
		For each $t\in T$ there is a homeomorphism $\Psi_t: D_{N\ast_\Lambda S}(\varphi_t(t))\to \mathcal{X}_{t}$ given by
		\[
		\Psi_t([\varphi_t(t),\chi])=\big([a_t,\widehat{\alpha}(\beta_{\pi(t)}(\widetilde{\chi}))],[\pi(t),\widetilde{\chi}]\big),
		\]
		for $\chi\in D_{N\ast_\Lambda S}(\varphi_t(t)^*\varphi_t(t))$.
	\end{lemma}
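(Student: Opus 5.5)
The plan is to factor $\Psi_t$ through the space of characters: show it is a composition of homeomorphisms
\[
[\varphi_t(t),\chi]\mapsto \chi \mapsto \widetilde{\chi}\mapsto \bigl([a_t,\widehat{\alpha}(\beta_{\pi(t)}(\widetilde{\chi}))],[\pi(t),\widetilde{\chi}]\bigr),
\]
whose last map is, by Definition~\ref{def:XtandX}, the homeomorphism $D_S(\pi(t^*t))\to\mathcal{X}_t$ defining the topology on $\mathcal{X}_t$. The first map is the standard homeomorphism $D_{N\ast_\Lambda S}(\varphi_t(t))\to D_{N\ast_\Lambda S}(\varphi_t(t)^*\varphi_t(t))$ given by the domain map restricted to a compact open bisection, since $[x,\chi]=[x,\chi']$ forces $\chi=\chi'$. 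So the content of the lemma is that $\chi\mapsto\widetilde{\chi}$ is a homeomorphism from $D_{N\ast_\Lambda S}(\varphi_t(t)^*\varphi_t(t))$ onto $D_S(\pi(t^*t))$.

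First I will compute $\varphi_t(t)^*\varphi_t(t)=(\alpha(\pi(t^*t)),\pi(t^*t))$ using \eqref{eq:rangeandsourceofas}, and check that $\widetilde{\chi}(\pi(t^*t))=\chi(\alpha(\pi(t^*t)),\pi(t^*t))=1$, so the map lands in $D_S(\pi(t^*t))$. Next I will observe that $E(N\ast_\Lambda S)=\{(\alpha(e),e): e\in E(S)\}$: an idempotent $(a,s)$ has $s$ idempotent, and $aa^*=\alpha(ss^*)$ together with the product formula (using $\lambda_e(b)=\alpha(e)b$ and $f(e,e)=\alpha(e)$) gives $a=\alpha(s)$. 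Hence $e\mapsto(\alpha(e),e)$ is a semilattice isomorphism $E(S)\to E(N\ast_\Lambda S)$, and $\chi\mapsto\widetilde{\chi}$ is precisely the dual map, so it is a bijection $\widehat{E}(N\ast_\Lambda S)\to\widehat{E}(S)$ carrying $D_{N\ast_\Lambda S}(\alpha(e),e)$ onto $D_S(e)$. This makes it a homeomorphism, exactly as in the proof of Proposition~\ref{prop:dualtriple}(i), and restricting to the basic sets gives the required homeomorphism.

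Alternatively, I could identify $\Psi_t$ with the restriction of $\psi$ from the proof of Theorem~\ref{thm:mainthmwithorderpreserving} to $D_{N\ast_\Lambda S}(a_t,\pi(t))$, since $\varphi_t(t)=(\iota^{-1}(tt^*),\pi(t))$. However, that argument used order-preservation of $j$ in well-definedness, so I prefer the direct character-level argument, which uses no properties of $j$.

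I expect the main obstacle to be well-definedness: $\varphi_t(t)$ need not equal $\varphi(t)=(a_t,\pi(t))$ when $j$ is not order preserving, yet the formula puts $a_t$ in the first coordinate. I will handle this by noting that the formula depends only on $\chi$, so the only representative of each germ in $D_{N\ast_\Lambda S}(\varphi_t(t))$ that matters is $\varphi_t(t)$ itself. I will also verify directly that $\widehat{\alpha}(\beta_{\pi(t)}(\widetilde{\chi}))(a_t^*a_t)=1$, using $a_t^*a_t=\alpha(\pi(tt^*))$ and $\beta_{\pi(t)}(\widetilde{\chi})(\pi(tt^*))=\widetilde{\chi}(\pi(t^*t))=1$, so that the germ $[a_t,\cdot]$ makes sense.
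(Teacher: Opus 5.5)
Your argument is correct. Its core step is exactly the paper's homeomorphism argument: $\chi\mapsto\widetilde{\chi}$ is the map of spectra dual to the semilattice isomorphism $e\mapsto(\alpha(e),e)$, it carries $D_{N\ast_\Lambda S}(\varphi_t(t)^*\varphi_t(t))$ onto $D_S(\pi(t^*t))$, and one composes with the map defining the topology of $\mathcal{X}_t$.

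Where you differ is in well-definedness and bijectivity. The paper argues that $\Psi_t$ acts as $\varphi\circ\varphi_t^{-1}$, which is order preserving. It then identifies $\Psi_t$ with a restriction of the map $\psi$ from the proof of Theorem~\ref{thm:mainthmwithorderpreserving}, and reads off surjectivity and injectivity from $\psi$. You instead note that each element of $D_{N\ast_\Lambda S}(\varphi_t(t))$ is $[\varphi_t(t),\chi]$ for a unique $\chi$, since the domain map is a homeomorphism on a compact open bisection. So $\Psi_t$ is a composite of three bijective homeomorphisms, and there are no representatives to compare. You also check directly that $\widehat{\alpha}(\beta_{\pi(t)}(\widetilde{\chi}))(a_t^*a_t)=1$, so the germs $[a_t,\cdot\,]$ make sense.

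What your route buys is independence from any property of $j$. The map $\psi$ was constructed in Section~\ref{sec:withorderpreserving} under the order-preserving hypothesis, and its well-definedness goes through \eqref{eq:orderondecomp}. Moreover, $\psi$ carries $D_{N\ast_\Lambda S}(\varphi_t(t))$ onto $\mathcal{D}_{\iota^{-1}(tt^*),\pi(t)}$, whose first coordinates are germs of $\iota^{-1}(tt^*)$ rather than of $a_t$. Your direct argument is therefore the cleaner justification in the setting of Section~\ref{sec:nonorderpreserving}.

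Two of your side remarks are slightly off, though neither is used.
\begin{itemize}
\item The restriction of $\psi$ to $D_{N\ast_\Lambda S}(a_t,\pi(t))$ has a different domain from $\Psi_t$. The ``since'' clause about $\varphi_t(t)$ does not give that identification.
\item $\varphi_t(t)=\varphi(t)$ holds exactly when $t=j(\pi(t))$. So the discrepancy you flag occurs whether or not $j$ is order preserving.
\end{itemize}
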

	
	\begin{proof}
		First, notice that the action of $\Psi_t$ on $t\!\downarrow\!$ is exactly the composition $\varphi\circ \varphi_t^{-1}$, and since both maps are order-preserving on their domains, then so is $\Psi_t$; hence, $\Psi_t$ is well-defined.
		
		In fact, we can see $\Psi_t$ as the restriction of $\psi$ to the bisection $D_{N\ast_\Lambda S}(\varphi_t(t))$ from the proof of Theorem~\ref{thm:mainthmwithorderpreserving}. We saw that $\psi$ maps $D_{N\ast_\Lambda S}(\varphi_t(t))$ onto $\mathcal{D}_{a_t,\pi(t)}$, which is the same as $\mathcal{X}_t$, so $\Psi_t$ is a surjective map onto $\mathcal{X}_t$. We also know that $\Psi_t$ is injective. To see that $\Psi_t$ is a homeomorphism, first note that the map $\chi\mapsto \widetilde{\chi}$ is the homeomorphism of spectra induced by the semilattice isomorphism $E(S)\to E(N\ast_{\Lambda}S)$ given by $e\mapsto (\alpha(e),e)$. Under this homeomorphism, the compact open set $D_{N\ast_{\Lambda}S}(\varphi_t(t)^*\varphi_t(t))$ corresponds exactly to $D_S(\pi(t)^*\pi(t))$. By definition, $\mathcal{X}_t$ is given the unique
		topology for which the map $\eta\mapsto
		\big([a_t,\widehat{\alpha}(\beta_{\pi(t)}(\eta))],[\pi(t),\eta]\big)$ from $D_S(\pi(t)^*\pi(t))$ onto $\mathcal{X}_t$ is a homeomorphism. Therefore $\Psi_t$ is the composition of two homeomorphisms, and hence is a homeomorphism.
	\end{proof}
	
	\begin{lemma}\label{lem:precisePhitformula}
		For each $t\in T$ the map $\Phi_t:=\Psi_t\circ\widehat{\varphi_t}\colon D_T(t)\to \mathcal{X}_t$ is a homeomorphism given by
		\[
		\Phi_t([t,\chi]) = \big([a_t,\widehat{\alpha}(\beta_{\pi(t)}(\chi\circ j|_{E(S)}))],[\pi(t),\chi\circ j|_{E(S)}]\big)
		\]
		for $\chi\in D_T(t^*t)$.
	\end{lemma}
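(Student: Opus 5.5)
Since $\Phi_t$ is defined as $\Psi_t\circ\widehat{\varphi_t}$, it is a homeomorphism as a composition of the homeomorphisms from the preceding two lemmas. What remains is to compute the explicit formula. Fix $\chi\in D_T(t^*t)$. By definition, $\widehat{\varphi_t}([t,\chi])=[\varphi_t(t),\eta]$ with $\eta:=\chi\circ(\varphi_t|_{E(T)\cap t\downarrow})^{-1}$. Applying $\Psi_t$ gives
\[
\Phi_t([t,\chi])=\big([a_t,\widehat{\alpha}(\beta_{\pi(t)}(\widetilde{\eta}))],[\pi(t),\widetilde{\eta}]\big).
\]
So the formula reduces to showing that $\widetilde{\eta}=\chi\circ j|_{E(S)}$, in the same way as the analogous claim $\widetilde{\chi\circ(\varphi|_{E(T)})^{-1}}=\chi\circ j|_{E(S)}$ in the proof of Theorem~\ref{thm:mainthmwithorderpreserving}.

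First I would note that for an idempotent $e\in E(T)$ the chart takes the form $\varphi_t(e)=(\iota^{-1}(e),\pi(e))=(\alpha(\pi(e)),\pi(e))$, because $\alpha\circ\pi|_{E(T)}=\iota^{-1}|_{E(T)}$. Hence $\varphi_t|_{E(T)\cap t\downarrow}$ agrees with the restriction of $\varphi|_{E(T)}$, and its inverse sends $(\alpha(f),f)$ to $j(f)$ for $f\in E(S)$, since $j|_{E(S)}=(\pi|_{E(T)})^{-1}$. For $f\le \pi(t^*t)$ we then get
\[
\widetilde{\eta}(f)=\eta(\alpha(f),f)=\chi(j(f)).
\]

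The main obstacle is that $\eta$ is a priori only defined on the idempotents below $\varphi_t(t^*t)$, whereas $\widetilde{\eta}$ must be a character on all of $E(S)$. I would handle this by extending $\eta$ in the standard way to a character on all of $E(N\ast_\Lambda S)$. Concretely, I would identify $\eta$ with $\chi\circ(\varphi|_{E(T)})^{-1}$: this function is a character on all of $E(N\ast_\Lambda S)$, and it agrees with $\eta$ on the idempotents below $\varphi_t(t^*t)$, since a character on the down-set of an idempotent $e$ with value $1$ at $e$ extends uniquely via $x\mapsto \chi(xe)$. For arbitrary $f\in E(S)$ this gives
\[
\widetilde{\eta}(f)=\chi(j(f))\chi(t^*t)=\chi(j(f)),
\]
using $\chi(t^*t)=1$. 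Therefore $\widetilde{\eta}=\chi\circ j|_{E(S)}$, and substituting this into the displayed expression for $\Phi_t([t,\chi])$ gives the stated formula.
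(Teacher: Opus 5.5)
Your proof is correct and follows the paper's route. $\Phi_t$ is a homeomorphism as the composite of the two preceding homeomorphisms, and the formula reduces to the identity $\widetilde{\eta}=\chi\circ j|_{E(S)}$, which is checked on idempotents using $\varphi_t(e)=(\alpha(\pi(e)),\pi(e))$ and $j|_{E(S)}=(\pi|_{E(T)})^{-1}$. Your unique-extension step ($x\mapsto\eta(x\varphi_t(t^*t))$) makes explicit a domain point that the paper passes over, since the paper only compares the two characters on elements $\pi(e)$ with $e$ in the relevant down-set.
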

	
	\begin{proof}
		The range of $\varphi_t|_{E(T)\cap t\!\downarrow}$ is $\{(\alpha(\pi(e)),\pi(e)) : e\in E(T)\cap t\!\downarrow\}$. So for $e\in E(T)\cap t\!\downarrow$ we have
		\[
		\widetilde{\big(\chi\circ \varphi_t|_{E(T)\cap t\!\downarrow}\big)}(\pi(e))= \chi\circ \varphi_t|_{E(T)\cap t\!\downarrow}(\alpha(\pi(e)),\pi(e))=\chi(e)=(\chi\circ j|_{E(S)})(\pi(e)),
		\]
		and hence we have $\widetilde{\big(\chi\circ \varphi_t|_{E(T)\cap t\!\downarrow}\big)}=\chi\circ j|_{E(S)}$. We now have
		\begin{align*}
			\Phi_t([t,\chi]) &= \Psi_t\circ\widehat{\varphi_t}([t,\chi])\\
			&= \Psi_t([\varphi_t(t),\chi\circ(\varphi_t|_{E(T)\cap t\!\downarrow})^{-1}])\\
			&= \big([a_t,\widehat{\alpha}(\beta_{\pi(t)}(\widetilde{\big(\chi\circ \varphi_t|_{E(T)\cap t\!\downarrow}\big)}))],[\pi(t),\widetilde{\big(\chi\circ \varphi_t|_{E(T)\cap t\!\downarrow}\big)}]\big)\\
			&= \big([a_t,\widehat{\alpha}(\beta_{\pi(t)}(\chi\circ j|_{E(S)}))],[\pi(t),\chi\circ j|_{E(S)}]\big),
		\end{align*}
		and so the result holds.
	\end{proof}

	\begin{definition}\label{def:toplocalcharts}
		For each $t\in T$ we call $\Phi_t$ a {\em topological local chart}, and the set $\{\Phi_t : t\in T\}$ the {\em family of topological local charts}.
	\end{definition}
	
	\begin{definition}\label{def:toplocalproduct}
		Let $t_1,t_2\in T$. We define $\mathcal{X}_{t_1}\times_c \mathcal{X}_{t_2}$ to be the set of pairs $(x_1,x_2)$, where
		\[
		x_1=
		\left(
		\left[a_{t_1},\widehat{\alpha}\!\left(\beta_{\pi(t_1t_2)}(\chi)\right)\right],
		\left[\pi(t_1),\beta_{\pi(t_2)}(\chi)\right]
		\right)
		\text{ and }
		x_2=
		\left(
		\left[a_{t_2},\widehat{\alpha}\!\left(\beta_{\pi(t_2)}(\chi)\right)\right],
		\left[\pi(t_2),\chi\right]
		\right),
		\]
		and $\chi\in D_S(\pi(t_1t_2)^*\pi(t_1t_2))$. Note that, in the setting of Section~\ref{sec:withorderpreserving}, $\mathcal{X}_{t_1}\times_c \mathcal{X}_{t_2}$ is the same set as
		\[
		(\mathcal{D}_{a_{t_1},\pi(t_1)}\times \mathcal{D}_{a_{t_2},\pi(t_2)})\cap(\GrpG(N)\ast_{\widehat{\Lambda}}\GrpG(S))^{(2)}.
		\]
		For $(x_1,x_2)\in \mathcal{X}_{t_1}\times_c \mathcal{X}_{t_2}$ we define the {\em topological local product} to be
		\[
		x_1\cdot^{\operatorname{top}}_{t_1,t_2}x_2 := \left(
		\left[a_{t_1t_2},\widehat{\alpha}\!\left(\beta_{\pi(t_1t_2)}(\chi)\right)\right],
		\left[\pi(t_1t_2),\chi\right]
		\right)\in\mathcal{X}_{t_1t_2}.
		\]
	\end{definition}
	
	The next result says that multiplication in $\mathcal{G}(T)$ becomes the topological local product when expressed using the topological local charts, and it is immediate from Lemma~\ref{lem:precisePhitformula} and the definition of the topological local product.
	
	\begin{proposition}\label{prop:topchartsandlocalprods}
		For each $t_1,t_2\in T$, $[t_1,\beta_{t_2}(\chi)]\in D_T(t_1)$, and $[t_2,\chi]\in D_T(t_2)$, we have
		\[
		\Phi_{t_1}([t_1,\beta_{t_2}(\chi)])\cdot^{\operatorname{top}}_{t_1,t_2}\Phi_{t_2}([t_2,\chi]) = \Phi_{t_1t_2}([t_1,\beta_{t_2}(\chi)][t_2,\chi]) = \Phi_{t_1t_2}([t_1t_2, \chi]).
		\]
	\end{proposition}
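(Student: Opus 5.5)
The plan is to compute both sides explicitly using Lemma~\ref{lem:precisePhitformula} and check that they agree coordinatewise. The second equality is just the product in $\GrpG(T)$: since $d([t_1,\beta_{t_2}(\chi)])=\beta_{t_2}(\chi)=r([t_2,\chi])$, the pair is composable and $[t_1,\beta_{t_2}(\chi)][t_2,\chi]=[t_1t_2,\chi]$. For the first equality, write $\eta:=\chi\circ j|_{E(S)}\in\widehat{E}(S)$ for $\chi\in D_T(t_2^*t_2)$; note that $[t_1t_2,\chi]$ lies in $D_T(t_1t_2)$ because $\chi((t_1t_2)^*t_1t_2)=\beta_{t_2}(\chi)(t_1^*t_1)\chi(t_2^*t_2)=1$.

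The key step is the identity
\[
\beta_{t_2}(\chi)\circ j|_{E(S)}=\beta_{\pi(t_2)}(\eta),
\]
that is, the map $\chi\mapsto\chi\circ j|_{E(S)}$ (the homeomorphism $\widehat{E}(T)\to\widehat{E}(S)$ induced by $j|_{E(S)}=(\pi|_{E(T)})^{-1}$) intertwines $\beta_{t_2}$ with $\beta_{\pi(t_2)}$. To prove it, evaluate at $e\in E(S)$: the left side is $\chi(t_2^*j(e)t_2)$, and $t_2^*j(e)t_2$ is an idempotent of $T$ with $\pi(t_2^*j(e)t_2)=\pi(t_2)^*e\pi(t_2)$. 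Because $\pi$ is idempotent-bijective, this gives $t_2^*j(e)t_2=j(\pi(t_2)^*e\pi(t_2))$, so the left side equals $\eta(\pi(t_2)^*e\pi(t_2))=\beta_{\pi(t_2)}(\eta)(e)$. The same argument applied to $t_1$ gives $\beta_{\pi(t_1t_2)}(\eta)=\beta_{\pi(t_1)}(\beta_{\pi(t_2)}(\eta))$, which is consistent with the fact that $\beta$ is an action.

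With this identity in hand, Lemma~\ref{lem:precisePhitformula} gives
\[
\Phi_{t_1}([t_1,\beta_{t_2}(\chi)])=\big([a_{t_1},\widehat{\alpha}(\beta_{\pi(t_1t_2)}(\eta))],[\pi(t_1),\beta_{\pi(t_2)}(\eta)]\big)
\]
and
\[
\Phi_{t_2}([t_2,\chi])=\big([a_{t_2},\widehat{\alpha}(\beta_{\pi(t_2)}(\eta))],[\pi(t_2),\eta]\big).
\]
Moreover $\eta(\pi(t_1t_2)^*\pi(t_1t_2))=\chi((t_1t_2)^*t_1t_2)=1$, again by idempotent-bijectivity. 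So the pair lies in $\mathcal{X}_{t_1}\times_c\mathcal{X}_{t_2}$ with parameter $\eta$, exactly in the form of Definition~\ref{def:toplocalproduct}. By that definition their topological local product is $\big([a_{t_1t_2},\widehat{\alpha}(\beta_{\pi(t_1t_2)}(\eta))],[\pi(t_1t_2),\eta]\big)$, which is $\Phi_{t_1t_2}([t_1t_2,\chi])$ by Lemma~\ref{lem:precisePhitformula} again.

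The main obstacle is the intertwining identity $\beta_{t}(\chi)\circ j|_{E(S)}=\beta_{\pi(t)}(\chi\circ j|_{E(S)})$; once it is in place, the rest is a direct substitution into the definitions. One further point needs care: the parameter $\chi$ in Definition~\ref{def:toplocalproduct} must be uniquely determined by the pair $(x_1,x_2)$, so that the product is well defined. This holds because $\chi=d_S$ of the second coordinate of $x_2$.
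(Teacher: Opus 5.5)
Your proposal is correct and follows the same route as the paper, which simply asserts that the result is immediate from Lemma~\ref{lem:precisePhitformula} and Definition~\ref{def:toplocalproduct}. Your intertwining identity $\beta_{t_2}(\chi)\circ j|_{E(S)}=\beta_{\pi(t_2)}(\chi\circ j|_{E(S)})$, together with $\beta_{\pi(t_1)}\circ\beta_{\pi(t_2)}=\beta_{\pi(t_1t_2)}$ (which holds directly because $\beta$ is an action of $S$), is exactly the step the paper leaves implicit when it puts $\Phi_{t_1}([t_1,\beta_{t_2}(\chi)])$ into the form required for membership in $\mathcal{X}_{t_1}\times_c\mathcal{X}_{t_2}$.
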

	
	\begin{remark}
		The family of local charts $\{\Phi_t : t\in T\}$ gives a local-triviality picture for the extension
		\[
		\xymatrix{\GrpG(N)\ar[r]^{\GrpG(\iota)}  & \GrpG(T)\ar[r]^{\GrpG(\pi)} & \GrpG(S) }.
		\]
		In particular, for each $s\in S$, the preimage $\mathcal{G}(\pi)^{-1}(D_S(s))$ is covered by
		the compact open bisections $\{D_T(t):t\in \pi^{-1}(s)\}$.  On each
		such bisection, the chart $\Phi_t$ is a homeomorphism
		\[
		D_T(t)
		\cong
		\left\{
		\big([a_t,\eta],[\pi(t),\chi]\big)
		\in
		D_N(a_t)\times D_S(\pi(t))
		:
		\eta=\widehat{\alpha}(\beta_{\pi(t)}(\chi))
		\right\}.
		\]
		Under this identification, the projection onto the second coordinate gives $\operatorname{pr}_2\circ \Phi_t=	\mathcal{G}(\pi)|_{D_T(t)}$. So $\mathcal{X}_t$ is the coordinate model for the local piece $D_T(t)$ of the extension over $D_S(\pi(t))$.  Proposition~\ref{prop:topchartsandlocalprods} then says that these local trivialisations respect multiplication, in the sense that multiplication in $\GrpG(T)$ becomes the topological local product on the spaces $\mathcal{X}_t$.
	\end{remark}
	
	We now sum up what we have done in the section with the following result.
	
	\begin{theorem}\label{thm:mainthmnoorderpreserving}
		Suppose we have an extension of inverse semigroups
		\[
		\xymatrix{N\ar[r]^{\iota}  & T \ar[r]^{\pi} & S\ar@/^8pt/ [l] ^{j}}
		\]
		with section $j\colon S\to T$, and let
		$\mathcal{X}=\bigcup_{t\in T}\mathcal{X}_t\subseteq \GrpG(N)\times\GrpG(S)$ be as in Definition~\ref{def:XtandX}. Then
		\begin{enumerate}
			\item for each $t\in T$ the topological local
			chart $\Phi_t\colon D_T(t)\to \mathcal{X}_t$ given by
			\[
			\Phi_t([t,\chi]) = \big([a_t,\widehat{\alpha}(\beta_{\pi(t)}(\chi\circ j|_{E(S)}))],[\pi(t),\chi\circ j|_{E(S)}]\big)
			\qquad\text{for } \chi\in D_T(t^*t)
			\]
			is a homeomorphism;
			\item for each $t\in T$ we have
			$\operatorname{pr}_2\circ\,\Phi_t = \GrpG(\pi)|_{D_T(t)}$, where
			$\operatorname{pr}_2\colon\GrpG(N)\times\GrpG(S)\to\GrpG(S)$ is the
			projection onto the second coordinate; and
			\item for all $t_1,t_2\in T$ and $\chi\in D_T((t_1t_2)^*t_1t_2)$, writing
			$\sigma_1:=[t_1,\beta_{t_2}(\chi)]\in D_T(t_1)$ and
			$\sigma_2:=[t_2,\chi]\in D_T(t_2)$, we have
			$(\sigma_1,\sigma_2)\in\GrpG(T)^{(2)}$,
			$\big(\Phi_{t_1}(\sigma_1),\Phi_{t_2}(\sigma_2)\big)\in \mathcal{X}_{t_1}\times_c\mathcal{X}_{t_2}$,
			and
			\[
			\Phi_{t_1}(\sigma_1) \cdot^{\operatorname{top}}_{t_1,t_2} \Phi_{t_2}(\sigma_2)
			= \Phi_{t_1t_2}(\sigma_1\sigma_2).
			\]
		\end{enumerate}
	\end{theorem}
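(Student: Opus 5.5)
The theorem is essentially a summary of the results of this section, so the plan is to assemble the three parts from the lemmas already proved, checking only the few compatibility points that were not made fully explicit.

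For (1), invoke Lemma~\ref{lem:precisePhitformula}. It gives that $\Phi_t=\Psi_t\circ\widehat{\varphi_t}$ is a homeomorphism $D_T(t)\to\mathcal{X}_t$ with exactly the stated formula. The supporting facts are that $\widehat{\varphi_t}$ is a homeomorphism (the lemma on $\widehat{\varphi_t}$) and that $\Psi_t$ is one (Lemma~\ref{lem: the Psi maps}).

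For (2), read off the second coordinate of $\Phi_t([t,\chi])$, which is $[\pi(t),\chi\circ j|_{E(S)}]$. Compare it with the functor $\GrpG(\pi)$, which sends $[t,\chi]$ to $[\pi(t),\chi\circ(\pi|_{E(T)})^{-1}]$ (as in \cite[Proposition~3.5]{S2}). Since $j|_{E(S)}=(\pi|_{E(T)})^{-1}$, these agree.

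For (3), first check composability. By definition of $\GrpG(T)$, $d([t_1,\beta_{t_2}(\chi)])=\beta_{t_2}(\chi)=r([t_2,\chi])$. We also need $\chi\in D_T(t_2^*t_2)$ and $\beta_{t_2}(\chi)\in D_T(t_1^*t_1)$. Both follow from $\chi((t_1t_2)^*t_1t_2)=1$, using
\[
(t_1t_2)^*t_1t_2\le t_2^*t_2\quad\text{and}\quad \chi(t_2^*t_1^*t_1t_2)=\beta_{t_2}(\chi)(t_1^*t_1).
\]
Next, to see $(\Phi_{t_1}(\sigma_1),\Phi_{t_2}(\sigma_2))\in\mathcal{X}_{t_1}\times_c\mathcal{X}_{t_2}$, set $\chi':=\chi\circ j|_{E(S)}$. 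We must show the identity
\[
\beta_{t_2}(\chi)\circ j|_{E(S)}=\beta_{\pi(t_2)}(\chi').
\]
For $e\in E(S)$ the left side is $\chi(t_2^*j(e)t_2)$, and $t_2^*j(e)t_2$ is an idempotent of $T$ with image $\pi(t_2)^*e\pi(t_2)$. Since $\pi$ is idempotent-bijective, $t_2^*j(e)t_2=j(\pi(t_2)^*e\pi(t_2))$, which gives the identity. We also need $\beta_{\pi(t_1)}\beta_{\pi(t_2)}=\beta_{\pi(t_1t_2)}$ on $D_S(\pi(t_1t_2)^*\pi(t_1t_2))$, which is the standard action property. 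Finally, $\chi'$ lies in $D_S(\pi(t_1t_2)^*\pi(t_1t_2))$ because $j(\pi((t_1t_2)^*t_1t_2))=(t_1t_2)^*t_1t_2$. Once these hold, the local product identity is Proposition~\ref{prop:topchartsandlocalprods}: both sides equal
\[
\bigl([a_{t_1t_2},\widehat{\alpha}(\beta_{\pi(t_1t_2)}(\chi'))],[\pi(t_1t_2),\chi']\bigr),
\]
using $\sigma_1\sigma_2=[t_1t_2,\chi]$.

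The only genuine content is the compatibility identity $\beta_{t_2}(\chi)\circ j|_{E(S)}=\beta_{\pi(t_2)}(\chi\circ j|_{E(S)})$ and the matching of the basepoints in the definition of $\times_c$. I expect this bookkeeping, which relies on $\pi$ being idempotent-bijective, to be the main (mild) obstacle. Everything else is citation.
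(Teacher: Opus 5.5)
Your proposal is correct and matches the paper. The paper gives no separate proof, presenting the theorem as a summary of Lemma~\ref{lem:precisePhitformula} for (1), the identification $\operatorname{pr}_2\circ\Phi_t=\GrpG(\pi)|_{D_T(t)}$ from the preceding remark for (2), and Proposition~\ref{prop:topchartsandlocalprods} for (3); your explicit check that $\beta_{t_2}(\chi)\circ j|_{E(S)}=\beta_{\pi(t_2)}(\chi\circ j|_{E(S)})$, via idempotent-bijectivity of $\pi$, is exactly the bookkeeping the paper leaves implicit when it calls Proposition~\ref{prop:topchartsandlocalprods} immediate.
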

	
	\begin{remark}\label{rmk:localthmforgroupoids}
		We reformulated Theorem~\ref{thm:mainthmwithorderpreserving} in terms of extensions of ample groupoids in Theorem~\ref{thm:globalcrossedproduct} (Theorem A). We now do the same for Theorem~\ref{thm:mainthmnoorderpreserving}. For an arbitrary extension of ample groupoids
		\[
		\xymatrix{H\ar[r]^{\kappa}  & \Sigma \ar[r]^{\rho} & G}
		\]
		with no assumption on the existence of a continuous global section of
		$\rho$, the collections of compact open bisections give us an extension of Boolean inverse semigroups
		\[
		\xymatrix{\Gamma_c(H)\ar[r]^{\Gamma_c(\kappa)}  & \Gamma_c(\Sigma) \ar[r]^{\Gamma_c(\rho)} & \Gamma_c(G)},
		\]
		and isomorphisms $\GrpG(\Gamma_c(H))\cong H$,
		$\GrpG(\Gamma_c(\Sigma))\cong\Sigma$ and $\GrpG(\Gamma_c(G))\cong G$. Since $\Gamma_c(\rho)$ is
		surjective it admits a section $j\colon \Gamma_c(G)\to \Gamma_c(\Sigma)$. Transporting the conclusions of
		Theorem~\ref{thm:mainthmnoorderpreserving} through the above isomorphisms gives:
		\begin{enumerate}
			\item a space $\mathcal{X}:=\bigcup_{U\in\Gamma_c(\Sigma)}\mathcal{X}_U$ that can be identified set-theoretically with $H\ast_{\widehat{\Lambda}}G$, and where each $\mathcal{X}_U$ can be identified set-theoretically with a compact open bisection of $H\ast_{\widehat{\Lambda}}G$; and
			
			\item homeomorphisms $\Phi_U:U\to \mathcal{X}_U$, and for each $U,V\in\Gamma_c(\Sigma)$ a subset $\mathcal{X}_U\times_c \mathcal{X}_V\subseteq \mathcal{X}_U\times \mathcal{X}_V$ on which the formula of Definition~\ref{def:toplocalproduct} describes the local product $\mathcal{X}_U\times_c \mathcal{X}_V \owns (x_1, x_2) \mapsto x_1 \cdot^{\operatorname{top}}_{U,V}x_2\in \mathcal{X}_{UV}$ that satisfies
			\[
			\Phi_U(\sigma_1)\cdot^{\operatorname{top}}_{U,V}\Phi_V(\sigma_2)=\Phi_{UV}(\sigma_1\sigma_2)
			\quad\text{for all }(\sigma_1,\sigma_2)\in (U\times V)\cap \Sigma^{(2)}.
			\]
		\end{enumerate}
		The point is that the product
		formula has to depend on the choice of bisections $U$ and $V$ containing
		$\sigma_1$ and $\sigma_2$ as a result of the lack of an order-preserving
		section. That is, if $W\subseteq s(U)\cap r(V)$ is a compact open set
		containing $s(\sigma_1)=r(\sigma_2)$, then the values of
		$\Phi_{UW}(\sigma_1)$, $\Phi_{WV}(\sigma_2)$ and $\Phi_{UWV}(\sigma_1\sigma_2)$
		can differ from the values of $\Phi_U(\sigma_1)$, $\Phi_V(\sigma_2)$ and
		$\Phi_{UV}(\sigma_1\sigma_2)$. We have chosen local sections
		$\varphi_U\colon U\to\rho^{-1}(U)$ on the elements of $\Gamma_c(\Sigma)$, and the
		difference between, for example, $\Phi_U(\sigma)$ and $\Phi_{UW}(\sigma)$
		reflects the difference between the local sections $\varphi_U$ and
		$\varphi_{UW}$.
	\end{remark}

	\begin{remark}\label{RemConnection}
		Let $t_1,t_2\in T$ and let $\chi\in D_T((t_1t_2)^*t_1t_2)$. The relationship between the two types of local charts and the two local products is given by the following formula:
		\[
		\Phi_{t_1}([t_1,\beta_{t_2}(\chi)])\cdot^{\operatorname{top}}_{t_1,t_2}\Phi_{t_2}([t_2,\chi]) = \Psi_{t_1t_2}([\varphi_{t_1}(t_1)\cdot^{\operatorname{alg}}_{t_1,t_2}\varphi_{t_2}(t_2),\chi\circ (\varphi_{t_1t_2}|_{E(T)\cap(t_1t_2)\!\downarrow})^{-1}]).
		\]
	\end{remark}

	\subsection{Reformulating the topological local product}\label{subsec:reformulating}
	
	In this section we express our topological local product as a local version of the groupoid product from Proposition~\ref{prop:twistedgroupoid}. \vspace{.2truecm}
	
	To this end, let $t_1,t_2\in T$, and let $(x_1,x_2)\in \mathcal{X}_{t_1}\times_c \mathcal{X}_{t_2}$ with
	\[
	x_1=
	\left([a_{t_1},\widehat{\alpha}(\beta_{\pi(t_1t_2)}(\chi))],[\pi(t_1),\beta_{\pi(t_2)}(\chi)]\right)
	\in \mathcal{X}_{t_1}
	\]
	and
	\[
	x_2=
	\left([a_{t_2},\widehat{\alpha}(\beta_{\pi(t_2)}(\chi))],[\pi(t_2),\chi]\right)
	\in \mathcal{X}_{t_2}.
	\]
	We have
	\[
	x_1\cdot^{\operatorname{top}}_{t_1,t_2}x_2 := \left(
	\left[a_{t_1t_2},\widehat{\alpha}\!\left(\beta_{\pi(t_1t_2)}(\chi)\right)\right],
	\left[\pi(t_1t_2),\chi\right]
	\right)\in\mathcal{X}_{t_1t_2}.
	\]
	Since $\varphi:T\to N\ast_\Lambda S$ is a homomorphism and $\varphi(t_i)=(a_{t_i},\pi(t_i))$, we have
	\[
	(a_{t_1},\pi(t_1))(a_{t_2},\pi(t_2))=(a_{t_1t_2},\pi(t_1t_2)),
	\]
	and hence
	\[
	a_{t_1}\lambda_{\pi(t_1)}(a_{t_2})f(\pi(t_1),\pi(t_2))=a_{t_1t_2}.
	\]
	Thus,
	\[
	\left(
	\left[a_{t_1t_2},\widehat{\alpha}\!\left(\beta_{\pi(t_1t_2)}(\chi)\right)\right],
	\left[\pi(t_1t_2),\chi\right]
	\right)=\]
	\[\left(
	\left[a_{t_1}\lambda_{\pi(t_1)}(a_{t_2})f(\pi(t_1),\pi(t_2)),\widehat{\alpha}\!\left(\beta_{\pi(t_1)\pi(t_2)}(\chi)\right)\right],
	\left[\pi(t_1)\pi(t_2),\chi\right]
	\right).
	\]
	
	This suggests the following definition:
	
	\begin{definition}
		Let $t_1,t_2\in T$. Let $\operatorname{pr}_2\colon \mathcal{X}_{t_1}\times \mathcal{X}_{t_2}\to \mathcal{X}_{t_2}$ be the projection onto the second coordinate.
		For
		\[
		x=
		\left([a_{t_2},\widehat{\alpha}(\beta_{\pi(t_2)}(\chi))],[\pi(t_2),\chi]\right)
		\in \operatorname{pr}_2(\mathcal{X}_{t_1}\times_c \mathcal{X}_{t_2}),
		\]
		we define
		\[
		\widehat{\lambda}^{\mathrm{loc}}_{t_1,t_2}(x)
		:=
		[\lambda_{\pi(t_1)}(a_{t_2}),\widehat{\alpha}(\beta_{\pi(t_1t_2)}(\chi))]
		\in \GrpG(N).
		\]
		For $(x_1,x_2)\in \mathcal{X}_{t_1}\times_c \mathcal{X}_{t_2}$ with
		\[
		x_1=
		\left([a_{t_1},\widehat{\alpha}(\beta_{\pi(t_1t_2)}(\chi))],[\pi(t_1),\beta_{\pi(t_2)}(\chi)]\right)
		\]
		and
		\[
		x_2=
		\left([a_{t_2},\widehat{\alpha}(\beta_{\pi(t_2)}(\chi))],[\pi(t_2),\chi]\right),
		\]
		we also define
		\[
		\widehat{f}^{\mathrm{loc}}_{t_1,t_2}:\mathcal{X}_{t_1}\times_c \mathcal{X}_{t_2}\to \GrpG(N)
		\]
		by
		\[
		\widehat{f}^{\mathrm{loc}}_{t_1,t_2}(x_1,x_2)
		:=
		[f(\pi(t_1),\pi(t_2)),\widehat{\alpha}(\beta_{\pi(t_1t_2)}(\chi))].
		\]
	\end{definition}
	
	\begin{remark}\label{RemDoubt}
	We claim that for fixed $t_1,t_2\in T$ the maps $\widehat{\lambda}^{\mathrm{loc}}_{t_1,t_2}$ and $\widehat{f}^{\mathrm{loc}}_{t_1,t_2}$ are well defined. To see this, first suppose that $x'\in \operatorname{pr}_2(\mathcal{X}_{t_1}\times_c \mathcal{X}_{t_2})$ with $x=x'$. We know that
			\[
		x'=
		\left([a_{t_2},\widehat{\alpha}(\beta_{\pi(t_2)}(\chi'))],[\pi(t_2),\chi']\right)
		\]
		for some $\chi'\in D_S(\pi(t^*t))$. But then $[\pi(t_2),\chi]=[\pi(t_2),\chi']$, and so $\chi=\chi'$. Hence  $\widehat{\lambda}^{\mathrm{loc}}_{t_1,t_2}(x)=\widehat{\lambda}^{\mathrm{loc}}_{t_1,t_2}(x')$. Now suppose $(x_1',x_2')\in \mathcal{X}_{t_1}\times_c \mathcal{X}_{t_2}$ with $(x_1,x_2)=(x_1',x_2')$. We know that
		\[
		x_1'=
		\left([a_{t_1},\widehat{\alpha}(\beta_{\pi(t_1t_2)}(\chi'))],[\pi(t_1),\beta_{\pi(t_2)}(\chi')]\right)\text{ and }x_2'=
		\left([a_{t_2},\widehat{\alpha}(\beta_{\pi(t_2)}(\chi'))],[\pi(t_2),\chi']\right)
		\]
		for some $\chi'\in D_S(\pi(t_1t_2)^*\pi(t_1t_2))$. But again, we then have $[\pi(t_2),\chi]=[\pi(t_2),\chi']$, and so $\chi=\chi'$. Hence $\widehat{f}^{\mathrm{loc}}_{t_1,t_2}(x_1,x_2)=\widehat{f}^{\mathrm{loc}}_{t_1,t_2}(x_1',x_2')$.
	\end{remark}
	
	\begin{lemma}
		Let $t_1,t_2\in T$, and let $(x_1,x_2)\in \mathcal{X}_{t_1}\times_c \mathcal{X}_{t_2}$ with
		\[
		x_1=
		\left([a_{t_1},\widehat{\alpha}(\beta_{\pi(t_1t_2)}(\chi))],[\pi(t_1),\beta_{\pi(t_2)}(\chi)]\right)
		\in \mathcal{X}_{t_1}
		\]
		and
		\[
		x_2=
		\left([a_{t_2},\widehat{\alpha}(\beta_{\pi(t_2)}(\chi))],[\pi(t_2),\chi]\right)
		\in \mathcal{X}_{t_2}.
		\]
		We have
		\[
		x_1\cdot^{\mathrm{top}}_{t_1,t_2}x_2
		=
		\left(
		[a_{t_1},\widehat{\alpha}(\beta_{\pi(t_1t_2)}(\chi))]\,\widehat{\lambda}^{\mathrm{loc}}_{t_1,t_2}(x_2)\,
		\widehat{f}^{\mathrm{loc}}_{t_1,t_2}(x_1,x_2),
		\,
		[\pi(t_1t_2),\chi]
		\right).
		\]
	\end{lemma}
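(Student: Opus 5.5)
The first coordinate of the local product is, by definition, $[a_{t_1t_2},\eta]$ with $\eta:=\widehat{\alpha}(\beta_{\pi(t_1t_2)}(\chi))$, and the second coordinate is $[\pi(t_1t_2),\chi]$. The second coordinates on both sides therefore agree trivially. The plan is to compute the product
\[
[a_{t_1},\eta]\,\widehat{\lambda}^{\mathrm{loc}}_{t_1,t_2}(x_2)\,\widehat{f}^{\mathrm{loc}}_{t_1,t_2}(x_1,x_2)
=[a_{t_1},\eta]\,[\lambda_{\pi(t_1)}(a_{t_2}),\eta]\,[f(\pi(t_1),\pi(t_2)),\eta]
\]
in $\GrpG(N)$. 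The aim is to collapse it, via Lemma~\ref{lem:cliffconsequences}(2) applied twice, to $[a_{t_1}\lambda_{\pi(t_1)}(a_{t_2})f(\pi(t_1),\pi(t_2)),\eta]$. We then conclude using the identity $a_{t_1}\lambda_{\pi(t_1)}(a_{t_2})f(\pi(t_1),\pi(t_2))=a_{t_1t_2}$, which follows from $\varphi$ being a homomorphism, as derived just before the definition of $\widehat{\lambda}^{\mathrm{loc}}$ and $\widehat{f}^{\mathrm{loc}}$.

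The main work is checking the hypotheses of Lemma~\ref{lem:cliffconsequences}(2): all three germs must be germs at the same character $\eta$, with $\eta(c^*c)=1$ for each relevant $c$. Write $s_i=\pi(t_i)$.
\begin{itemize}
\item For $a_{t_1}$ this is automatic, since $x_1\in\mathcal{X}_{t_1}$ is a genuine element with first coordinate $[a_{t_1},\eta]$. Alternatively, $a_{t_1}^*a_{t_1}=\alpha(s_1s_1^*)$, and $\eta(\alpha(s_1s_1^*))=\beta_{s_1s_2}(\chi)(s_1s_1^*)=\chi(s_2^*s_1^*s_1s_1^*s_1s_2)=\chi((s_1s_2)^*s_1s_2)=1$.
\item For $f(s_1,s_2)$ we use $f(s_1,s_2)^*f(s_1,s_2)=\alpha(s_1s_2(s_1s_2)^*)$; exactly as in the proof of Proposition~\ref{prop:dualtriple}(iii), this evaluates to $\chi((s_1s_2)^*s_1s_2)=1$.
\item For $\lambda_{s_1}(a_{t_2})$ we use $\alpha^{-1}(\lambda_{s_1}(e))=s_1\alpha^{-1}(e)s_1^*$ (shown in the proof of Proposition~\ref{prop:dualtriple}(ii)) together with $a_{t_2}^*a_{t_2}=\alpha(s_2s_2^*)$. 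This gives
\[
\eta(\lambda_{s_1}(a_{t_2})^*\lambda_{s_1}(a_{t_2}))=\chi\bigl(s_2^*s_1^*\,s_1s_2s_2^*s_1^*\,s_1s_2\bigr)=\chi((s_1s_2)^*s_1s_2)=1.
\]
\end{itemize}
Here $\chi\in D_S(\pi(t_1t_2)^*\pi(t_1t_2))$ by the definition of $\mathcal{X}_{t_1}\times_c\mathcal{X}_{t_2}$.

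The only delicate point I anticipate is this last evaluation, where one must be careful that $\beta_{s_1s_2}$ is applied to a character on the correct domain and that the idempotent bookkeeping collapses correctly; the rest is bookkeeping. Note that no order-preservation of $j$ is needed: we never compare different representatives, since all elements are fixed by the chosen $t_1,t_2$ and $\chi$. With the three conditions established, Lemma~\ref{lem:cliffconsequences}(2) yields the merged germ, and substituting $a_{t_1t_2}$ finishes the proof.
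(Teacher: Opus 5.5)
Your proposal is correct and follows the paper's proof: you merge the three germs at the common character $\eta=\widehat{\alpha}(\beta_{\pi(t_1t_2)}(\chi))$ using Lemma~\ref{lem:cliffconsequences}, and then substitute $a_{t_1}\lambda_{\pi(t_1)}(a_{t_2})f(\pi(t_1),\pi(t_2))=a_{t_1t_2}$, which comes from $\varphi$ being a homomorphism. Your explicit check that $\eta(c^*c)=1$ for each of the three elements $c$ is a useful detail that the paper compresses into the phrase ``all three terms have the same domain character.''
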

	
	\begin{proof}
		By definition,
		\[
		\widehat{\lambda}^{\mathrm{loc}}_{t_1,t_2}(x_2)
		=
		[\lambda_{\pi(t_1)}(a_{t_2}),\widehat{\alpha}(\beta_{\pi(t_1t_2)}(\chi))]
		\]
		and
		\[
		\widehat{f}^{\mathrm{loc}}_{t_1,t_2}(x_1,x_2)
		=
		[f(\pi(t_1),\pi(t_2)),\widehat{\alpha}(\beta_{\pi(t_1t_2)}(\chi))].
		\]
		Since all three terms have the same domain character, Lemma~\ref{lem:cliffconsequences} gives
		\[
		[a_{t_1},\widehat{\alpha}(\beta_{\pi(t_1t_2)}(\chi))]\,\widehat{\lambda}^{\mathrm{loc}}_{t_1,t_2}(x_2)\,
		\widehat{f}^{\mathrm{loc}}_{t_1,t_2}(x_1,x_2)=
		[a_{t_1}\lambda_{\pi(t_1)}(a_{t_2})f(\pi(t_1),\pi(t_2)),
		\widehat{\alpha}(\beta_{\pi(t_1t_2)}(\chi))].
		\]
		Since $\varphi:T\to N\ast_\Lambda S$ is a homomorphism and $\varphi(t_i)=(a_{t_i},\pi(t_i))$, we have
		\[
		(a_{t_1},\pi(t_1))(a_{t_2},\pi(t_2))=(a_{t_1t_2},\pi(t_1t_2)),
		\]
		and hence
		\[
		a_{t_1}\lambda_{\pi(t_1)}(a_{t_2})f(\pi(t_1),\pi(t_2))=a_{t_1t_2}.
		\]
		Therefore
		\begin{align*}
			&\left(
			[a_{t_1},\widehat{\alpha}(\beta_{\pi(t_1t_2)}(\chi))]\,\widehat{\lambda}^{\mathrm{loc}}_{t_1,t_2}(x_2)\,
			\widehat{f}^{\mathrm{loc}}_{t_1,t_2}(x_1,x_2),
			\,
			[\pi(t_1t_2),\chi]
			\right)\\
			&\hspace{8cm}=
			\left(
			[a_{t_1t_2},\widehat{\alpha}(\beta_{\pi(t_1t_2)}(\chi))],
			[\pi(t_1t_2),\chi]
			\right),
		\end{align*}
		which is exactly $x_1\cdot^{\mathrm{top}}_{t_1,t_2}x_2$.
	\end{proof}

	\subsection{A global approach}\label{subsec:globalapproach}
	  When $j$ is not order preserving we no longer have the groupoid structure on the set $\GrpG(N)\ast_{\widehat{\Lambda}}\GrpG(S)$. But the function $\widehat{\varphi}$ from Theorem~\ref{thm:mainthmwithorderpreserving} as a formula from one set to another is well defined; that is, $\widehat{\varphi}\colon \GrpG(T)\to \mathcal{X}$ given by
	 \[
	 \widehat{\varphi}([t,\chi])
	 =
	 \left(
	 [a_t,\widehat{\alpha}(\beta_{\pi(t)}(\chi\circ j|_{E(S)}))],
	 [\pi(t),\chi\circ j|_{E(S)}]
	 \right)
	 \]
	 is well defined. To see this, we need to prove that if $[t,\chi]=[u,\chi]$, then
	 \begin{enumerate}
	 	\item[(1)] $[a_t,\widehat{\alpha}(\beta_{\pi(t)}(\chi\circ j|_{E(S)}))]= [a_u,\widehat{\alpha}(\beta_{\pi(u)}(\chi\circ j|_{E(S)}))]$ , and
	 	\item[(2)] $[\pi(t),\chi\circ j|_{E(S)}]=[\pi(u),\chi\circ j|_{E(S)}]$.
	 \end{enumerate}
	 So assume $[t,\chi]=[u,\chi]$. Then there exists $r\leq t,u$ such that $\chi(r^*r)=1$. Then $\pi(r)\le \pi(t),\pi(u)$, and since $j|_{E(S)}=(\pi|_{E(T)})^{-1}$ we have
	 \[
	 \chi\circ j|_{E(S)}(\pi(r)^*\pi(r))=\chi(r^*r)=1.
	 \]
	 Hence $[\pi(t),\chi\circ j|_{E(S)}]=[\pi(u),\chi\circ j|_{E(S)}]$, and so (2) holds. Since the germs in (2) have the same range, we get
	 \[
	 \beta_{\pi(t)}(\chi\circ j|_{E(S)})=\beta_{\pi(u)}(\chi\circ j|_{E(S)})=:\eta.
	 \]
	 So to show that (1) holds we need to show that
	 \[
	 [a_t,\widehat{\alpha}(\eta)]
	 =
	 [a_u,\widehat{\alpha}(\eta)].
	 \]
	 Using the isomorphism $\varphi:T\to N*_{\Lambda}S$ given by $\varphi(x)=(a_x,\pi(x))$ we have
	 \begin{align*}
	 	r\le t,u\implies \varphi(r)\le \varphi(t),\varphi(u) &\iff (a_r,\pi(r))\le (a_t,\pi(t)),(a_u,\pi(u)) \\
	 	&\implies a_r\le a_t,a_u.
	 \end{align*}
	 We claim that $\widehat{\alpha}(\eta)(a_r^*a_r)=1$. To see this, first note that
	 \[
	 \alpha^{-1}(a_r^*a_r)=\alpha^{-1}(\iota^{-1}(j(\pi(r))r^*rj(\pi(r))^*))=\pi(j(\pi(r))r^*rj(\pi(r))^*)=\pi(rr^*).
	 \]
	 Hence
	 \begin{align*}
	 \widehat{\alpha}(\eta)(a_r^*a_r)&=\beta_{\pi(t)}(\chi\circ j|_{E(S)})(\alpha^{-1}(a_r^*a_r))\\
     &=\chi(j|_{E(S)}(\pi(t)^*\pi(rr^*)\pi(t)))\\
     &=\chi(j|_{E(S)}(\pi(t^*rr^*t))).
	 \end{align*}
	 Now note that since $r\le t$, and hence $r^*r\le t^*t$, we have
	 \[
	 t^*rr^*t = t^*(tr^*r)(tr^*r)^*t= t^*tr^*rr^*rt^*t=r^*r.
	 \]
	 So we continue the calculation above to get
	 \[
	 \widehat{\alpha}(\eta)(a_r^*a_r)=\chi(j|_{E(S)}(\pi(t^*rr^*t)))=\chi(j|_{E(S)}(\pi(r^*r)))=\chi(r^*r)=1.
	 \]
	 So the claim holds and we see that (1) holds. Hence the formula for $\widehat{\varphi}$ is well defined.

\begin{example}
    Here we trace through how our results in this section apply to the groupoid extension
    \[
	\xymatrix{\GrpH\ar[r]^{\kappa}  & \Sigma \ar[r]^{\rho} & \GrpG }
		\]
        given in Example~\ref{Exa:Example1}.  We showed that this example has no continuous global section, that is, its dual inverse semigroup extension has no order preserving section.
        Here, we have
  $\GrpG(N) \cong \GrpH$, $\GrpG(T) \cong \Sigma$ and $\GrpG(S) \cong \GrpG$ and the set \[\GrpG(N)\ast_{\widehat{\Lambda}}\GrpG(S) \cong \GrpH\ast_{\widehat{\Lambda}}\GrpG = \{(h,g) : h \in \GrpH, g\in \GrpG \text{ and } s(h)=s(g)\}. \]
  Looking at the inclusion of open bisections $B \subseteq \Sigma$ into $\GrpH\ast_{\widehat{\Lambda}}\GrpG$, we see that the topology $\tau$ on $\GrpH\ast_{\widehat{\Lambda}}\GrpG$ is the subspace topology of the product topology on $\GrpH \times \GrpG$.
Using these dual isomorphisms, the dual map $\widehat{\varphi}:\Sigma \to
\GrpH\ast_{\widehat{\Lambda}}\GrpG$ is defined so that
\[\widehat{\varphi}(\sigma) = \begin{cases} (h,\rho(\sigma)) & \text{ if }\sigma = \kappa(h)\\
((p,0), (p,1))(=((p,0),\kappa(\sigma))) & \text{otherwise}\end{cases}\] and is clearly a well-defined bijection.
\end{example}

\section{Final remark}

At the level of extensions of inverse semigroups, our examples require,
because of \cite[Proposition 3.8]{S2}, the non-existence of order-preserving
sections of the projection $\pi: T\rightarrow S$.

A simple computation shows that the existence of an order-preserving section
$j:S\rightarrow T$ of $\pi$ is equivalent to the fact that, for any $t\leq
s\in S$, $f(s, t^*t)=\alpha(t^*t)\in E(N)$; by \cite[Proposition 3.23]{DK},
this is equivalent to $f(s,e)=\alpha(ses^*)$ and $f(e,s)=\alpha(ess^*)$ for
all $s\in S$ and for all $e\in E(S)$. Thus, the non-existence of an
order-preserving section $j:S\rightarrow T$ of $\pi$ only depends on the
existence of a $2$-cocycle $f:S\times S\rightarrow N$ which does not satisfy
this requirement. Thus, a global understanding of this phenomenon requires
understanding the second cohomology group $H^2(S,N)$, as defined by Lausch
\cite{L}.

In this direction, a better approach was given in the language of ordered
groupoids (equivalent to inverse semigroups \cite[Chapter 4]{Law}) by Bainson
and Gilbert \cite{BG1}. They also showed in \cite{BG2}: (i) how to construct
every possible idempotent-bijective inverse semigroup extension of any
inverse semigroup $S$ by any \underline{abelian} Clifford semigroup $N$; (ii)
how to classify all of them via the second cohomology group $H^2(S,N)$.
\vspace{.2truecm}

Bainson and Gilbert construction turns out to be an interesting direction to
explore in its own right: using this construction, we could obtain any
inverse semigroup extension
$$
\xymatrix{N\ar[r]^{\iota}  & T \ar[r]^{\pi} & S},
$$
of an inverse semigroup $S$ by abelian Clifford semigroup $N$ \emph{up to
equivalence}, and then using Steinberg's duality \cite{S2}, obtain any
extension of any ample groupoid by any abelian group bundle
$$
\xymatrix{\GrpG(N)\ar[r]^{\widehat{\iota}}  & \GrpG(T) \ar[r]^{\widehat{\pi}} & \GrpG(S)}
$$
\emph{up to equivalence}.

\section*{AI statement}
Generative AI was not used in any way to prove or discover any of the results
in this paper.
	
	\section*{Acknowledgments}
	This work started during a visit by the third author to the University of Wollongong (Australia), the Victoria University of Wellington (New Zealand) and the University of Sydney (Australia) in Spring 2023, supported by the International Visitors Program of the Sydney Mathematical Research Institute (Australia). The third author thanks both SMRI and these universities for their support, kindness, and hospitality during the stay.


\begin{thebibliography}{9}


		\bibitem{A9}
		B. Armstrong, G.G. de Castro, L.O. Clark, K. Courtney, Y.-F. Lin, K. McCormick, J. Ramagge, A. Sims, B. Steinberg \textit{Reconstruction of Twisted Steinberg Algebras}, Int. Math. Res. Not. IMRN (2023), no. 3, 2474--2542.


		\bibitem{A6}
		B. Armstrong, L.O. Clark, K. Courtney, Y.-F. Lin, K. McCormick, J. Ramagge, \textit{Twisted Steinberg algebras}, J. Pure Appl. Algebra \textbf{226} (2022), no. 3, Paper No. 106853, 33 pp.
		
        \bibitem{ACaHJL}B. Armstrong, L.O. Clark, A. an Huef, M. Jones, Y.-F. Lin, \emph{Filtering germs: groupoids associated to inverse semigroups}, Expo. Math. {\bf 40} (2022), no.~1, 127--139.
		

		\bibitem{BG1}
		B.O. Bainson, N.D. Gilbert, \textit{Cohomology and extensions of ordered groupoids}, J. Alg. Appl. \textbf{21} (2022), No. 9, Paper No. 2250177, 23 pp.
		
		\bibitem{BG2}
		B.O. Bainson, N.D. Gilbert, \textit{Cohomology and extensions of ordered groupoids}, The twenty-sixth NBSAN meeting, St Andrews on 13th July 2017, slides of N.D. Gilbert presentation, https://personalpages.manchester.ac.uk/staff/Mark.Kambites/events/nbsan/nbsan26\_gilbert.pdf.
		
		\bibitem{DK}
		M. Dokuchaev, M. Khrypchenko, \textit{Twisted partial actions and extensions of semilattices of groups by groups}, Inter. J. Algebra Comput. \textbf{27} (2017), no. 7, 887--933.
		
		\bibitem{KKLRU}
		M. Kennedy, S.-J. Kim, X. Li, S. Raum, D. Ursu, \textit{The ideal intersection property for essential groupoid $C^*$-algebras}, arXiv:2107.03980v2 (2021).

        \bibitem{Kumjian}
		A. Kumjian, \textit{On $C^*$-diagonals}, Can. J. Math. \textbf{38} (1986), 969--1008.
		
		
		\bibitem{L}
		H. Lausch, \textit{Cohomology of inverse semigroups}, J. Algebra \textbf{35} (1975), 273--303.
		
		\bibitem{Law}
		M.V. Lawson, ``Inverse semigroups. The Theory of Partial Symmetries'', World Scientific Publishing Co., Inc., River Edge, NJ, 1998, xiv+411 pp.
		
		

        \bibitem{Pass1}
		D. Passman, ``The algebraic structure of group rings", Krieger Publishing Co., Inc., Melbourne, FL, 1985, xiv+734 pp.

		\bibitem{Pass2}
		D. Passman, ``Infinite crossed products", Pure Appl. Math., 135, Academic Press, Inc., Boston, MA, 1989, xii+468 pp.
		
		\bibitem{Renault}
		J. Renault, ``A Groupoid Approach to C*-Algebras", Lecture Notes in Math., vol. 793, Springer-Verlag, New York, 1980.
		
		\bibitem{SW16}
		A. Sims and D.P. Williams, \textit{The primitive ideals of some \'etale groupoid $C^*$-algebras}, Algebr. Represent. Theory \textbf{19} (2016), 255--276.
		
		\bibitem{S1}
		B. Steinberg, \textit{A groupoid approach to discrete inverse semigroup algebras}, Adv. Math. \textbf{223} (2010), 689--725.
		
		\bibitem{S2}
		B. Steinberg, \textit{Twists, crossed products and inverse semigroup cohomology}, J. Aust. Math. Soc. \textbf{114} (2023), no. 2, 253--288.
		
		\bibitem{SZ}
		B. Steinberg, N. Szak\'acs, \textit{Simplicity of inverse semigroup and \'etale groupoid algebras}, Adv. Math. \textbf{380} (2021), Paper No. 107611, 55 pp.
		
	\end{thebibliography}
\end{document}